\newcommand{\setR}{\mathbb{R}}
\newcommand{\setN}{\mathbb{N}}
\newcommand{\flow}{\mathfrak{S}}
\newcommand{\dissip}{\mathfrak{D}}
\newcommand{\ent}{\mathbf{H}}
\newcommand{\prb}{\mathcal{P}_2}
\newcommand{\R}{\mathbb{R}}
\newcommand{\sign}{\mathrm{sign}}
\newcommand{\intT}{\int_0^T}
\newcommand{\intX}{\int_{\R}}
\newcommand{\dd}{\,\mathrm{d}}
\newcommand{\loc}{\text{loc}}
\newcommand{\dff}{\mathrm{D}}
\newcommand{\eps}{\epsilon}
\newcommand{\indy}{\mathbf{1}}
\newcommand{\mom}{\mathbf{m}}
\newcommand{\mina}{\underline{a}}
\newcommand{\maxaxx}{\overline{a_{xx}}}
\newcommand{\rescale}{\mathbf{S}}
\newcommand{\trf}{\mathfrak{T}}
\newcommand{\dens}{{\prb(\setR)}}
\newcommand{\wass}{{\mathbf{W}_2}}
\newcommand{\poten}{\mathcal{F}}
\newcommand{\Kbound}{\mathfrak{K}}
\DeclareMathOperator{\abs}{Abs}
\DeclareMathOperator{\sgn}{Sgn}
\DeclareMathOperator{\stp}{Stp}
\DeclareMathOperator{\heav}{PosPar}
\DeclareMathOperator*{\esssup}{ess\,sup}
\newtheorem{thm}{Theorem}[section]
\newtheorem{prop}[thm]{Proposition}
\newtheorem{lem}[thm]{Lemma}
\newtheorem{cor}[thm]{Corollary}
\newtheorem{rmk}[thm]{Remark}
\newtheorem{defin}[thm]{Definition}
\subjclass[2010]{35A15, 35K65, 35A02}
\keywords{Convection diffusion equation, entropy solution, metric gradient flow, Wasserstein distance}
\begin{document}

\title[Entropy solutions via Wasserstein gradient flow]{Curves of steepest descent are entropy solutions for a class of degenerate convection-diffusion equations}

\author{Marco Di Francesco}
\address{Marco Di Francesco, Departament de Matem\`{a}tiques, Universitat Aut\`{o}noma de Barcelona, E-08193 Bellaterra, Catalunya (Spain)}
\email{difrancesco@mat.uab.cat}

\author{Daniel Matthes}
\address{Daniel Matthes, Zentrum Mathematik, Technische Universit\"at M\"unchen, D-85747 Garching bei M\"unchen (Germany)}
\email{matthes@ma.tum.de}
\thanks{Corresponding author: D.~Matthes \texttt{$\langle$matthes@ma.tum.de$\rangle$} Tel: \texttt{+49-89-289-18300} Fax: \texttt{+49-89-289-18308}}

\date{Submission date: 3. August 2012}

\begin{abstract}
  We consider a nonlinear degenerate convection-diffusion equation with inhomogeneous convection
  and prove that its entropy solutions in the sense of Kru\v{z}kov are obtained as the --- a posteriori unique ---
  limit points of the JKO variational approximation scheme for an associated gradient flow in the $L^2$-Wasserstein space.
  The equation lacks the necessary convexity properties
  which would allow to deduce well-posedness of the initial value problem by the abstract theory of metric gradient flows.
  Instead, we prove the entropy inequality directly by variational methods
  and conclude uniqueness by doubling of the variables.
\end{abstract}

\maketitle

\section{Introduction}
The goal of this paper is to show on the example of the degenerate parabolic equation
\begin{equation}\label{eq:y-equation}
  \partial_t u = (u^m)_{yy} + (b(y) u^m)_y,\quad y\in \R,\ \ t\geq0,
\end{equation}
how the solution concepts of \emph{metric gradient flows} and
\emph{entropy solutions} can be combined to obtain global in time
well-posedness of the initial value problem.
Specifically, we define an energy functional $\poten$,
construct time-discrete curves of steepest descent in the landscape of $\poten$ with respect to the $L^2$-Wasserstein metric
by means of the JKO variational scheme \cite{JKO},
and show that these curves converge in the limit of continuous time to entropy solutions \`{a} la Kru\v{z}kov \cite{KruvzkovEntropy,kruzkov}
(or, more precisely, in the sense of Carrillo \cite{Car99}) for \eqref{eq:y-equation}.
This way, existence of solutions to \eqref{eq:y-equation} is obtained by a variational method,
and their uniqueness follows from the theory of entropy solutions.

In \eqref{eq:y-equation}, the exponent $m>1$ determines the degeneracy
of the diffusion for vanishing densities, and
\begin{equation}\label{eq:assumption_b}
  b\in L^1(\R)\cap W^{1,\infty}(\R)
\end{equation}
is a given coefficient modelling heterogeneity in the convection part.
As initial condition for \eqref{eq:y-equation}, we prescribe a datum $u^0\in L^1(\setR)\cap L^\infty(\setR)$ of finite second moment.

The link to metric gradient flows is established as follows.
There is a spatial coordinate change $y=T(x)$ and an according transformation $\rho(t,x)=T'(x)u(t,T(x))$ (see subsection \ref{subsec:coordinate} below) such that
all sufficiently regular solutions $u$ to \eqref{eq:y-equation} become solutions of
\begin{align}
  \label{eq:x-equation}
  \partial_t\rho = \big( \rho [a(x)\rho^{m-1}]_x \big)_x,
\end{align}
and vice versa;
here $a\in W^{2,\infty}(\setR)$ is a strictly positive function, determined from $b$ via $T$.
Well-known formal arguments, see e.g. \cite{villani_book}, indicate that
the time-dependent density functions $\rho(t)$ satisfying \eqref{eq:x-equation}
are ``curves of steepest descent'' in the energy landscape of the entropy functional
\begin{equation}
  \label{eq:poten}
  \poten[\rho]:=\frac{1}{m}\intX a(x) \rho^m \dd x,
\end{equation}
with respect to the $2$-Wasserstein distance.

Indeed, if $\poten$ would be a geodesically $\lambda$-convex functional,
then the theory of $\lambda$-contractive gradient flows --- see e.g. \cite{AGS} --- could be applied
to conclude the existence of a unique gradient flow for $\poten$ in the space of probability measures.
This flow's curves would be weak solutions to \eqref{eq:x-equation},
and by inversion of the coordinate transformation above, we could conclude well-posedness for \eqref{eq:y-equation}.
Unfortunately, $\poten$ does apparently not have the required convexity property;
see Section \ref{sec:gradient_flow} for details.

Nevertheless, the variational structure behind \eqref{eq:x-equation} is on the basis
for our proof of \emph{existence} of solutions to \eqref{eq:y-equation}.
Specifically, we employ the JKO (or ``minimizing movement'') approximation scheme
to obtain time-discrete curves $\rho_\tau$ of steepest descent for $\poten$.
We prove that any family of such discrete approximations $\rho_\tau$ possesses a weak time-continuous limit curve $\rho_*$.
By inversion of the coordinate change, this provides a candidate $u_*$ for a solution to \eqref{eq:y-equation}.
We emphasize that this construction does not require geodesic $\lambda$-convexity for $\poten$;
boundedness from below, coercivity and lower semi-continuity are sufficient.

At this point, the variational framework of minimizing movements provides
strong tools which allow us to prove that $u_*$ is actually an
entropy solution for \eqref{eq:y-equation} in the sense of Carrillo \cite{Car99}:
we show that $u_*^m\in L^2_\loc(0,\infty;H^1(\setR))$, and that for every
non-negative test function $\varphi\in C^\infty_c(\setR_+\times\setR)$
and for every $k\in\setR_+$:
\begin{align}
  \label{eq:preei}
  \begin{split}
    & \intT\intX |u_*-k|\varphi_t\dd y\dd t \\ 
    & - \intT\intX \sgn(u_*-k)
    \Big(\big[(u_*^m)_y + b(u_*^m-k^m)\big]\varphi_y - b_yk^m \varphi \Big)\dd y\dd t
    \ge \mathfrak{D}(u_*)[\varphi] \ge 0.
  \end{split}
\end{align}
We also obtain a non-trivial lower bound on the dissipation $\mathfrak{D}(u_*)$;
see Proposition \ref{prop:karlsen} for details.

The derivation of the entropy inequality \eqref{eq:preei} is the core element of our proof.
First, a time-discrete version of this estimate is proven directly for the JKO scheme, and this is passed to the limit.
In the derivation of the discrete estimates, the key idea is --- as usual --- to choose appropriate variations of the minimizers.
Here we build on the ideas
that have already been employed in \cite{JKO} for the derivation of the weak formulation of the linear Fokker-Planck equation
and have been generalized later in \cite{matthes_mccann_savare} to the ``flow interchange lemma'', see Lemma \ref{lem:fi}:
variations are performed by an auxiliary gradient flow, which --- in contrast to the gradient flow of \eqref{eq:poten} itself --- is $\lambda$-convex
and thus satisfies certain variational inequalities.


Once that \eqref{eq:preei} has been establish,
we adapt the doubling of the variables method in \cite{karlsen2003} to our case
and show that $u_*$ is actually the \emph{unique} entropy solution for the given initial condition $u^0$.
A posteriori, we conclude uniqueness of the limit curve $\rho_*$ of the approximation scheme as well.

The following theorem summarizes our main result in an informal way;
the precise statement is given in Theorem \ref{thm:main} in Section \ref{subsec:results}.
For a possible slight generalization avoiding the scaling, see Remark \ref{remark:alternative_result}.
\begin{thm}
  Let an initial condition $u^0\in L^\infty(\setR)$ of finite second moment be given.
  Then every curve obtained from the JKO approximation for $\poten$ in the limit of continuous time
  corresponds --- by a chance of coordinates --- to the unique entropy solution for \eqref{eq:y-equation}.
\end{thm}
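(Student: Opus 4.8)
The plan is to break the proof into three parts mirroring the structure announced in the introduction: (i) construction of a limit curve from the JKO scheme, (ii) verification that it is an entropy solution in the sense of Carrillo, and (iii) uniqueness via doubling of variables. First I would fix the spatial coordinate change $y = T(x)$ that transforms \eqref{eq:y-equation} into \eqref{eq:x-equation}, verify that it maps the initial datum $u^0$ of finite second moment to a probability density $\rho^0 \in \prb(\setR)$ with $\poten[\rho^0] < \infty$, and record the basic properties of $a \in W^{2,\infty}(\setR)$ with $\inf a > 0$. With $\rho^0$ in hand I would run the JKO scheme: set $\rho_\tau^0 = \rho^0$ and inductively let $\rho_\tau^{n+1}$ minimize $\rho \mapsto \frac{1}{2\tau}\wass(\rho,\rho_\tau^n)^2 + \poten[\rho]$ over $\prb(\setR)$. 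Existence of minimizers follows from lower semicontinuity and coercivity of $\poten$ (here $m>1$ is what gives coercivity in $L^m$ and, with the bounded moment, compactness in $\wass$). The standard a priori estimates — the total-square-distance bound $\sum_n \wass(\rho_\tau^{n+1},\rho_\tau^n)^2 \le 2\tau(\poten[\rho^0] - \inf\poten)$ and monotonicity of $\poten$ along the scheme — give, via a refined Ascoli–Arzelà argument, a subsequence of the piecewise-constant interpolants converging locally uniformly in $\wass$ to a limit curve $\rho_*$; transporting back through $T^{-1}$ yields a candidate $u_*$ for \eqref{eq:y-equation}.

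The heart of the argument is step (ii), the derivation of \eqref{eq:preei}. The plan is to prove a discrete analogue of the entropy inequality at each JKO step and then pass to the limit. The main tool is the flow interchange lemma (Lemma \ref{lem:fi}): one perturbs the minimizer $\rho_\tau^{n+1}$ along an auxiliary $\lambda$-convex gradient flow — here the flow of a suitable relative-entropy-type functional that generates, e.g., a linear or porous-medium diffusion with the right structure to see the Kru\v{z}kov kink $|u-k|$ — and uses that $\rho_\tau^{n+1}$ is a minimizer to obtain a one-sided inequality relating the dissipation of that auxiliary functional along the JKO flow to the derivative of $\poten$ along the auxiliary flow. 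Concretely, to capture the family of Kru\v{z}kov entropies one wants to test against the nonlinear mobility structure: evaluating the Euler–Lagrange relation for $\rho_\tau^{n+1}$ against the optimal transport plan, combined with the flow-interchange estimate, should produce the discrete version of the left-hand side of \eqref{eq:preei} bounded below by a discrete dissipation term. Summing over $n$, multiplying by $\tau$, and using the convergence from step (i) together with lower semicontinuity of the dissipation functional $\dissip$ under weak convergence yields \eqref{eq:preei} in the limit. Along the way one must extract the regularity $u_*^m \in L^2_{\loc}(0,\infty;H^1(\setR))$; this should come out of a uniform bound on $\sum_n \tau \, \|\nabla (\text{something like }(\rho_\tau^{n+1})^m)\|_{L^2}^2$ obtained from the same flow-interchange computation with the auxiliary flow chosen to be the heat flow (or Dirichlet-energy gradient flow).

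For step (iii), uniqueness, the plan is to adapt the doubling-of-variables technique of \cite{karlsen2003} to the inhomogeneous convection term $b(y)u^m$. Given two entropy solutions $u_1, u_2$ with the same datum, one writes the Kru\v{z}kov inequality for $u_1(t,y)$ with $k = u_2(s,z)$ and for $u_2(s,z)$ with $k = u_1(t,y)$, adds them, and tests against a doubled mollifier; the degenerate-diffusion terms are controlled exactly as in Carrillo's framework using the $H^1$-regularity of $u^m$ and the nonnegative dissipation $\dissip$, while the new ingredient is handling the $b_y k^m$ and $b(u^m - k^m)$ contributions — these are Lipschitz in the $b$-variable thanks to \eqref{eq:assumption_b}, so a Gronwall argument in the doubled-variable limit closes as usual and gives an $L^1$-contraction, hence $u_1 = u_2$. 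Combined with step (i) this also forces the limit curve $\rho_*$ to be independent of the chosen subsequence, completing the theorem.

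I expect the main obstacle to be step (ii), specifically choosing the right auxiliary $\lambda$-convex flow(s) and carrying out the flow-interchange computation so that the \emph{full} one-parameter family of Kru\v{z}kov entropies $|u-k|$ — rather than just the single entropy $u \mapsto u^m/m$ built into $\poten$ — emerges from the variational inequalities, while simultaneously generating the correct convection and convection-source terms $b(u^m-k^m)\varphi_y$ and $b_y k^m \varphi$ with the correct signs; the nonlinear mobility $\rho$ in \eqref{eq:x-equation} and the spatial inhomogeneity $a(x)$ make the discrete Euler–Lagrange manipulations delicate, and keeping careful track of error terms of order $\tau$ (and their summability) through the passage to the limit will require care.
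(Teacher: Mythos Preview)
Your three-part plan matches the paper's architecture, and you have correctly identified flow interchange as the engine for step (ii), the heat flow as the auxiliary for the $H^1$ regularity, and doubling of variables for step (iii). Two technical ingredients the paper relies on are absent from your outline, though, and without them step (ii) will stall.

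First, the auxiliary functional whose $\kappa$-flow produces the Kru\v{z}kov inequality is not simply a relative entropy: it is the mollified Kru\v{z}kov entropy $\int S_\eps(\eta/\trf')\,\phi\circ\trf\,\trf'\,\dd x$ with the spatial test function $\phi$ and the coordinate change $\trf$ already built in, \emph{plus} a viscous regularization $\nu\ent$. The bare Kru\v{z}kov term is not geodesically $\lambda$-convex; the added $\nu\ent$ is what makes the associated $H$ in Lemma~\ref{lem:z} jointly convex (the $\nu\xi^{-2}$ in $H_{\xi\xi}$ dominates the off-diagonal terms), so that the flow-interchange lemma applies at all. One then passes $\tau\downarrow0$, then $\nu\downarrow0$, then $\eps\downarrow0$, in that order. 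Your phrase ``a suitable relative-entropy-type functional'' hides exactly this construction, which is the non-obvious step. The same device (auxiliary functional plus $\nu\ent$) is used a second time, with $\heav_\eps$ in place of $S_\eps$, to obtain the $L^\infty$ bound on $\rho_*$ --- a requirement of Definition~\ref{def:entropy_sol_1} that your plan does not mention.

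Second, Wasserstein convergence from step (i) is not enough to pass to the limit in the nonlinear terms of the discrete entropy inequality: you need \emph{strong} convergence of $\bar\rho_\tau$ in $L^m(]0,T[\times\setR)$. The paper obtains this from the Rossi--Savar\'e extension of Aubin--Lions (Theorem~\ref{thm:savare}), feeding in the $L^2(0,T;H^1)$ bound on $\bar\rho_\tau^{m/2}$ and the approximate H\"older continuity in $\wass$. Your appeal to ``lower semicontinuity of the dissipation functional under weak convergence'' covers only one of the terms; the convection and source terms require pointwise (hence strong) convergence of $u_*^m$.
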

A related observation about the connection between entropy solutions and gradient flows has been made recently
by Gigli and Otto \cite{GigliOtto} in the context of the inviscid Burger's equation, see also previous results in \cite{dafermos_entropy,Otto-relax}.
The interpretation of the coincidence between entropy solutions and gradient flows is that
both types of solutions can be characterized by diminishing an underlying entropy functional ``as fast as possible''.
For previous results on the well-posedness of scalar conservation laws in Wasserstein spaces, we refer to \cite{brenier,CDT}.

Apart from revealing an interesting connection between the two seemingly unrelated theories of entropy solutions and Wasserstein gradient flows,
our simple example indicates a possible general strategy to prove existence and uniqueness of certain nonlinear evolution equations
which can be cast in the form of a Wasserstein gradient flow of a functional
which is \emph{not} necessarily geodesically $\lambda$-convex.
First, use methods from the calculus of variations to construct a candidate for a solution;
a priori, there might be several.
Second, show that this candidate is an entropy solution by deriving further a priori estimates in the variational framework.
Third, conclude uniqueness of the entropy solution.
This strategy provides a new method to obtain entropy solutions,
alternative e.g. to the classical vanishing viscosity approach \cite{Daf00},
to the wave-front-tracking algorithm \cite{dafermos_wft},
or to semigroup theory \cite{Cra72}.
In particular, the variational approach does not require the solution of auxiliary regularized problems.

To conclude this introduction,
we stress again that the goal of this paper is to establish a link between gradient flows and entropy solutions
but not to refine the results on existence and uniqueness of weak solutions to \eqref{eq:y-equation}.
This said, we remark that --- as far as our model equation \eqref{eq:y-equation} is concerned ---
uniqueness of weak $L^1$-solutions could be proven also by other means, e.g. with the methods developed in \cite{otto97}.

The paper is organized as follows.
In Section \ref{sec:statement}, we formulate the problem and state our main results.
Section \ref{sec:gradient_flow} recalls some basic fact from the theory of gradient flows.
In Lemma \ref{lem:z}, we provide a result on the contractivity of quite general reaction-diffusion equations in the Wasserstein metric
which might be of independent interest.
In Section \ref{sec:time_discrete} we prove convergence of the scheme and show that the limit curves are entropy solutions.
Section \ref{sec:uniqueness} contains (for the sake of completeness) the uniqueness proof of entropy solutions.

\medskip

\section*{Notation}

\subsection*{Measures, densities and Wasserstein distance}
Here $\dens$ denotes the set of probability \emph{densities} $\rho\in L^1(\setR)$ with finite second moment $\int x^2\rho(x)\dd x$.
Note that the symbol $\dens$ is frequently used in the literature for the (wider) space of probability \emph{measures} on $\R$,
which we shall denote by $\overline{\dens}$ instead.
Given a measure $\mu\in\overline{\dens}$,
its probability distribution function $U:\setR\to[0,1]$ and pseudo-inverse $G:[0,1]\to\setR\cup\{\pm\infty\}$, respectively,
are given by
\begin{align*}
  U(x) = \mu\big((-\infty,x)\big), \quad
  G(\omega) = \sup\big\{ x\in\setR\,\big|\,U(x)\le\omega\big\}.
\end{align*}
The space $\overline{\prb(\R)}$ is endowed with the $L^2$-Wasserstein distance $\wass$,
defined by
\begin{align}
  \label{eq:wassbyidf}
  \wass(\mu,\tilde\mu) = \bigg( \int_0^1 \big[G(\omega)-\tilde G(\omega)\big]^2\dd\omega \bigg)^{1/2},
\end{align}
where $G$ and $\tilde G$ are the pseudo-inverse distribution functions of $\mu$ and $\tilde\mu$, respectively.
The pair $(\overline{\dens},\wass)$ is a complete metric space.
We refer to \cite{villani_book} for a more detailed explanation.

\subsection*{Mollifications}
We will frequently use the following elementary functions:
the absolute value $\abs(x):=|x|$,
the positive part $\heav(x)=(x)_+=(x+|x|)/2$,
the sign $\sgn(x)=x/|x|$ (with $\sgn(0)=0$),
and the unit step function $\stp(x)=(x+|x|)/2|x|$ (with $\stp(0)=0$).
In fact, we will mostly use their regularized versions obtained by \emph{mollification}:
denote by $\delta_1:\setR\to\setR$ the standard mollifier
\begin{align}
  \delta_1(y) = \begin{cases}
    Z^{-1}\exp[-1/(1-y^2)] & \text{for all $y\in(-1,1)$}, \\ 0 & \text{otherwise},
  \end{cases}\label{eq:mollifier}
\end{align}
where $Z>0$ is chosen s.t.\ $\delta_1$ has unit integral. For
$\eps>0$, define the $\eps$-mollifier $\delta_\eps:\setR\to\setR$ by
$\delta_\eps(y)=\eps^{-1}\delta_1(\eps^{-1}y)$.
Accordingly, we denote by $\abs_\eps=\abs\star\delta_\eps$ and $\heav_\eps=\heav\star\delta_\eps$
the $\eps$-mollifications of $\abs$ and $\heav$, respectively.
Notice that $\sgn_\eps'=2\delta_\eps$ and $\stp_\eps'=\delta_\eps$, which means in particular that $\sgn_\eps$ and $\stp_\eps$ are non-decreasing functions.

\section{Statement of the problem and results}\label{sec:statement}
\subsection{Coordinate transformation}\label{subsec:coordinate}
In this subsection we shall establish the correspondence between equations \eqref{eq:y-equation} and \eqref{eq:x-equation}. More precisely, we prove the following proposition.

\begin{prop}\label{prop:change_var}
  Let $b\in W^{1,\infty}(\R)\cap L^1(\R)$.
  Then, there exist a function $a$ with the properties
  \begin{enumerate}
  \item [(a1)] $a(x)\geq\mina>0$ for all $x\in \R$,
  \item [(a2)] $a\in W^{2,\infty}(\R)$,
  \end{enumerate}
  a bijective change of coordinates $y=\trf(x)$ on $\R$ and a corresponding transformation $\rescale:L^1(\setR)\to L^1(\setR)$ with
  \begin{equation}
    \label{eq:defscaling}
    \rescale[u](x) := \trf'(x) u(\trf(x))
  \end{equation}
  such that the transformation $\rho(t):=\rescale[u(t)]$ of an arbitrary weak solution $u$ to \eqref{eq:y-equation} with initial datum $u_0\in L^1(\R)$,
  satisfies equation \eqref{eq:x-equation} with initial datum $\rho^0=\rescale[u^0]$.
\end{prop}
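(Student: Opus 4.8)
The plan is to construct the change of variables $y = \trf(x)$ explicitly as a solution of an ODE chosen precisely so that the first-order (convection) term in \eqref{eq:y-equation} is absorbed into the coefficient of the diffusion term. The natural ansatz is $\trf' = 1/a\circ\trf$ for a function $a$ to be determined, or equivalently to prescribe $x\mapsto y$ via $\frac{\mathrm{d}x}{\mathrm{d}y} = \exp\!\big(\tfrac1m\int_0^y b(s)\dd s\big)^{-1}$-type formulas; the point is that the multiplicative factor $\trf'(x)$ in the definition \eqref{eq:defscaling} of $\rescale$ combines with $b$ exactly to produce a pure divergence. Concretely, I would set $\beta(y) = \int_0^y b(s)\dd s$, which is bounded and Lipschitz by \eqref{eq:assumption_b}, define the diffeomorphism through $y = \trf(x)$ by requiring $\trf'(x) = e^{\beta(\trf(x))}$ (so $x\mapsto\trf(x)$ is the inverse of $y\mapsto\int_0^y e^{-\beta(s)}\dd s$), and then read off $a(x)$ from the resulting transformed equation. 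Since $\beta\in W^{1,\infty}$ and is bounded, $e^{\pm\beta}$ is bounded above and below away from zero, which will give property (a1); property (a2) will follow because $a$ is built from $e^{\beta}$ composed with $\trf$, and $\beta' = b\in W^{1,\infty}$, so two derivatives land on $b'\in L^\infty$.

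The key computational step is the substitution itself. Given a weak solution $u$ of \eqref{eq:y-equation}, define $\rho(t,x) := \trf'(x)\,u(t,\trf(x))$ as in \eqref{eq:defscaling}; this is the correct transformation law so that $\rho(t,\cdot)$ is again a probability density (mass is preserved by the change of variables) and so that $\rho^0 = \rescale[u^0]$. Then I would compute, for a test function $\varphi$ on the $x$-side, how the weak formulation of \eqref{eq:y-equation} tested against the pullback $\psi(t,y) := \varphi(t,\trf^{-1}(y))$ transforms. Chasing the chain rule through the two spatial terms $(u^m)_{yy}$ and $(b u^m)_y$, and using the defining relation $\trf'' / \trf' = b\circ\trf$ (which is exactly $\trf' = e^{\beta\circ\trf}$ differentiated), the first-order term should cancel and one is left with the weak form of $\partial_t\rho = (\rho\,[a(x)\rho^{m-1}]_x)_x$ for an appropriate $a$; matching coefficients identifies $a$ explicitly, e.g. $a(x) = (\trf'(x))^{-(m-1)} \cdot(\text{constant or smooth factor})$, and one checks it has the regularity and positivity claimed. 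The boundedness $b\in L^1$ ensures $\beta$ has finite limits at $\pm\infty$, so $a$ is bounded both above and below — this is where the $L^1$ assumption, as opposed to merely $W^{1,\infty}$, is used.

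The main obstacle I anticipate is bookkeeping rather than conceptual: one must be careful that the transformation is applied consistently at the level of \emph{weak} solutions (test functions, integration by parts, no boundary terms since everything is on $\R$ with decay coming from finite mass and second moments), and that the nonlinearity $u^m$ transforms correctly — note $\rho^m = (\trf')^m (u\circ\trf)^m$, so the power of $\trf'$ that survives in front of $\rho^m$ after accounting for the Jacobian of the change of variables in the integral is what forces the specific form of $a$. A secondary technical point is verifying $a\in W^{2,\infty}$ with quantitative bounds: since $a$ involves $\trf$ and $\trf$ solves an ODE with $W^{1,\infty}$ right-hand side, $\trf\in C^{1,1}_{\loc}$ only, so one checks that the particular combination defining $a$ gains the needed regularity — differentiating $a$ twice produces at worst $b' \in L^\infty$ and products of bounded quantities, never $b''$. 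Once these are in place the converse direction (a weak solution of \eqref{eq:x-equation} pulls back to one of \eqref{eq:y-equation}) is immediate by running the same computation with $\trf^{-1}$ in place of $\trf$.
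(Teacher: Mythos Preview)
Your overall approach matches the paper's: define $\trf$ via an ODE of the form $\trf'(x) = \exp\big(c\,\beta(\trf(x))\big)$ with $\beta(y)=\int_0^y b$, set $\rho = \trf'\cdot u\circ\trf$, and compute the transformed equation at the weak level. However, your specific choice $c=1$ is wrong, and the error traces back to a miscomputation: differentiating $\trf' = e^{\beta\circ\trf}$ gives
\[
\trf'' = e^{\beta\circ\trf}\,\beta'(\trf)\,\trf' = (b\circ\trf)\,(\trf')^2,
\quad\text{so}\quad \trf''/\trf' = (b\circ\trf)\,\trf',
\]
not $b\circ\trf$ as you claim. If you carry out the substitution carefully you will see that the coefficient in front of $(\rho^m)_x$ forces $a = \tfrac{m}{m-1}(\trf')^{-(m+1)}$ (not exponent $-(m-1)$), and then requiring the coefficient in front of $\rho^m$ to equal $a'$ is a genuine constraint on $\trf$ that is satisfied only when $c = -\tfrac{m-1}{2m}$. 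This is exactly the paper's choice $\trf' = \alpha\circ\trf$ with $\alpha(y)=\alpha_0\exp\big(-\tfrac{m-1}{2m}\int_0^y b\big)$. With the wrong $c$ the first-order term does \emph{not} cancel and you do not land on \eqref{eq:x-equation}.

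Once you fix the constant, the rest of your plan---boundedness of $\beta$ from $b\in L^1$ giving (a1), two derivatives of $a$ landing at worst on $b'\in L^\infty$ giving (a2), and the weak-formulation bookkeeping---goes through just as you describe and as the paper does.
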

\begin{proof}
  Choose a positive constant $\alpha_0$,
  and define
  \begin{equation}\label{eq:alpha}
    \alpha(y)=\alpha_0\exp\left( -\frac{(m-1)}{2m} \int_0^y b(\eta)d\eta\right).
  \end{equation}
  The assumptions on $b$ ensure $\alpha \in W^{2,\infty}(\R)$ and $\alpha\geq \underline{\alpha}$ for some $\underline{\alpha}>0$.
  Therefore, the initial value problem
  \begin{equation}
    \label{eq:ivpa}
    \trf_x(x)=\alpha(\trf(x)) \quad \text{for $x\in\setR$}, \qquad \trf(0)=0
  \end{equation}
  admits a unique global solution $\trf:\setR\to\setR$, with $\trf_x(x)\geq\underline{\alpha}$.
  By the chain rule, $\trf$ satisfies further
  \begin{equation}
    \label{eq:change_cauchy}
    \trf_{xx}(x) = \alpha'(\trf(x))\trf_x(x),
  \end{equation}
  and in view of \eqref{eq:alpha}, we also have
  \begin{align}
    \label{eq:alphaderivative}
    \alpha'\circ\trf = -\frac{m-1}{2m}(b\circ\trf)\trf_x .
  \end{align}
  Now, let $u$ be a weak solution to \eqref{eq:y-equation},
  that is
  \begin{align}
    \label{eq:uweak}
    -\int_0^\infty\intX u\varphi_t\dd y\dd t = \int_0^\infty\intX u^m\varphi_{yy}\dd y\dd t - \int_0^\infty\intX u^mb\varphi_y\dd y\dd t
  \end{align}
  for all test functions $\varphi\in C^\infty_c(\setR_+\times\setR)$.
  Let $\rho(t)=\rescale[u(t)]$ for all $t\ge0$, which means that
  \begin{align}
    \label{eq:subst}
    u(\trf(x),t) = \frac{\rho(x,t)}{\trf_x(x)}
  \end{align}
  for all $x\in\setR$.
  Further, for given $\varphi$, define the transformed test function $\psi\in C^\infty_c(\setR_+\times\setR)$ by
  \begin{align*}
    \psi(x,t) = \varphi(\trf(x),t).
  \end{align*}
  Using \eqref{eq:ivpa} and \eqref{eq:change_cauchy}, one easily verifies that
  \begin{align}
    \label{eq:substtest}
    \varphi_y(\trf(x),t)\trf_x(x) = \psi_x(t,x), \quad \varphi_{yy}(\trf(x),t)\trf_x(x)^2 = \psi_{xx}(x,t)-\psi_x(x,t)\alpha'(\trf(x)).
  \end{align}
  Substitute \eqref{eq:subst} into \eqref{eq:uweak},
  perform the change of variables $(y,t)=(\trf(x),t)$ under each of the integrals,
  and simplify the expressions containing test functions using \eqref{eq:substtest}.
  This yields
  \begin{align}
    \label{eq:weakrho}
    \begin{split}
      -\int_0^\infty\intX \rho\psi_t\dd x\dd t
      &= \int_0^\infty\intX \rho^m\psi_{xx}\trf_x^{-(m+1)}\dd x\dd t \\
      & \qquad - \int_0^\infty\intX\rho^m\psi_x\big[(\alpha'\circ\trf)\trf_x^{-(m+1)}+(b\circ\trf)\trf_x^{-m}\big]\dd x\dd t.
    \end{split}
  \end{align}
  Defining $a:\setR\to\setR$ by
  \begin{align}
    \label{eq:defa}
    a(x) = \frac{m}{m-1}(\alpha\circ\trf(x))^{-(m+1)},
  \end{align}
  we find by direct calculations that
  \begin{align}
    \label{eq:rela}
    \frac1m a_x
    \stackrel{\eqref{eq:ivpa}}{=} -\frac{m+1}{m-1}(\alpha'\circ\trf)\trf_x^{-(m-1)}
    \stackrel{\eqref{eq:alphaderivative}}{=} (\alpha'\circ\trf)\trf_x^{-(m+1)}+(b\circ\trf)\trf_x^{-m}.
  \end{align}
  Substitute \eqref{eq:defa} and \eqref{eq:rela}, respectively, in the first and the second integral on the right hand side of \eqref{eq:weakrho},
  to find
  \begin{align*}
    -\int_0^\infty\intX \rho\psi_t\dd x\dd t
    = \frac{m-1}{m}\int_0^\infty\intX a\rho^m\psi_{xx}\dd x\dd t
    - \frac1m \int_0^\infty\intX a_x\rho^m\psi_x\dd x\dd t.
  \end{align*}
  It is easily checked that this is a weak formulation of \eqref{eq:x-equation}.
\end{proof}
\begin{rmk}
  The function $a$ in Proposition \ref{prop:change_var} is not uniquely determined:
  different choices of $\alpha_0>0$ in \eqref{eq:alpha} change $a$ by a positive factor.
  On the other hand, if the function $a$ is given,
  then $b$ and the corresponding change of variable $y=\trf(x)$ can be recovered from $a$ in a unique way.
  Indeed, it follows from \eqref{eq:ivpa} and \eqref{eq:defa} that
  \begin{align}
    \label{eq:defT}
    \trf(x)= \int_0^x \left[\frac{m-1}{m}a(\xi)\right]^{-\frac{1}{m+1}} \dd\xi,
  \end{align}
  while \eqref{eq:defa} and \eqref{eq:alphaderivative} imply
  \begin{align}
    \label{eq:bfroma}
    b\circ\trf = \frac{2m}{m^2-1}(\log a)_x.
  \end{align}
\end{rmk}
\begin{rmk}
  The scaling $\trf$ is still well-defined if the assumption $b\in L^1(\R)$ is relaxed to $b\propto|y|^{-1}$ as $|y|\rightarrow +\infty$.
  However, assumptions (a1) \& (a2) may be not satisfied in those cases.
  Therefore we require $b\in L^1(\R)$.
  Let us also emphasise that the case $b\equiv\mathrm{const}$ cannot be included,
  since the solution to \eqref{eq:change_cauchy} would blow up at some negative $x$.
\end{rmk}

\subsection{Entropy solutions for \eqref{eq:y-equation}}\label{subsec:entropy}

The notion of entropy solution for \eqref{eq:y-equation} used here is a variant of the one
originally introduced by Carrillo \cite{Car99} and later adapted by Karlsen et al. \cite{karlsen2003}.
To motivate this definition, consider the usual viscous approximation of \eqref{eq:y-equation},
\begin{align}
  \label{eq:viscous1}
  \partial_t u = (b(y) u^m)_y + (u^m)_{yy} + \nu u_{yy},
\end{align}
which possesses smooth and classical solutions $u_\nu$ for every $\nu>0$,
provided that the initial condition $u^0$ is smooth enough.
The following formal considerations are made under the hypothesis that $u_\nu$ converges
--- locally uniformly on $\setR_+\times\setR$ and in $L^2_{\loc}([0,T[\times \R)$ ---
to a limit function as $\nu\downarrow0$.

Similarly as in the classical approach by Kru\v{z}kov \cite{kruzkov} for scalar conservation laws,
we would like to derive from \eqref{eq:viscous1} an evolution \emph{in}equality for all functions of the form $|u-k|$ with given $k\in\setR_+$.
As usual, the calculations are carried out with a suitable approximation of $|u-k|$;
in our case, we multiply equation \eqref{eq:viscous1} by $\sgn_\eps(u^m-k^m)$ with a mollification parameter $\eps>0$
and rewrite it in the following way:
\begin{align}
  \label{eq:prepareentropy}
  \sgn_\eps(u^m-k^m)\partial_tu
  = \sgn_\eps(u^m-k^m)\big[(u^m)_{y} +b(u^m-k^m) + \nu u_y\big]_y
  + \sgn_\eps(u^m-k^m)b_yk^m .
\end{align}
Now let $T>0$, integrate \eqref{eq:prepareentropy} against a non-negative test function $\varphi \in C^\infty_c(]0,T[\times \R)$,
and integrate by parts in the first term on the right-hand side:
\begin{align}
  \label{eq:ent1}
  - \intT\intX  &\varphi\sgn_\eps(u^m-k^m)\partial_tu \dd y\dd t =  \\
  \label{eq:ent2}
  & \intT\intX \sgn_\eps(u^m-k^m)
  \Big(\big[(u^m)_y + b(u^m-k^m) + \nu u_y\big]\varphi_y - b_yk^m \varphi \Big)\dd y\dd t \\
  \label{eq:ent3}
  & + \intT\intX \varphi b\sgn_\eps'(u^m-k^m)(u^m-k^m)(u^m)_y \dd y\dd t \\
  \label{eq:ent4}
  & + \intT\intX \sgn_\eps'(u^m-k^m)\big[(u^m)_y^2 + m\nu u^{m-1}u_y^2\big]\varphi\dd y\dd t.
\end{align}
For further simplification, observe that inside the integrand in \eqref{eq:ent1},
\begin{align*}
  \lim_{\eps\downarrow0}\sgn_\eps(u^m-k^m)\partial_t u = \sgn(u-k)\partial_tu = \partial_t|u-k|
\end{align*}
at every point $(t,y)$ with $u(t,y)\neq k$.
Likewise, in \eqref{eq:ent2},
the term with $\nu$ becomes $\sgn(u-k)u_y = |u-k|_y$ in the limit $\eps\downarrow0$.
After integration by parts, its contribution is
\begin{align*}
  -\nu\intT\intX |u-k|\varphi_{yy}\dd y\dd t.
\end{align*}
This term is negligible in the limit $\nu\downarrow0$.
Next, the integrand in \eqref{eq:ent3} can be rewritten as
\begin{align*}
  \varphi b [R_\eps(u^m-k^m)]_y \quad \text{with} \quad
  R_\eps(s) = \int_0^s r\sgn_\eps'(r)\dd r.
\end{align*}
Since $R_\eps$ converges to zero uniformly for $\eps\downarrow0$,
it follows --- after an integration by parts --- that the integral \eqref{eq:ent3} vanishes in that limit.
Finally, the integral \eqref{eq:ent4} obviously gives a non-negative contribution.
Thus in the limit $\eps\downarrow0$, and after an integration by parts with respect to time in \eqref{eq:ent1},
the equality \eqref{eq:ent1}--\eqref{eq:ent4} implies that
\begin{equation}
  \label{eq:entropyformulation}
  \begin{split}
    \intT\intX |u-k|\varphi_t\dd y\dd t 
    & \ge \intT\intX \sgn(u-k)
    \Big(\big[(u^m)_y + b(u^m-k^m)\big]\varphi_y - b_yk^m \varphi \Big)\dd y\dd t \\
    & \qquad + \limsup_{\eps\downarrow0} \intT\intX \sgn_\eps'(u^m-k^m)\big[(u^m)_y\big]^2\varphi\dd y\dd t.
  \end{split}
\end{equation}
We take this as the defining inequality for entropy solutions.
%
%
%
\begin{defin}[Definition of entropy solution]\label{def:entropy_sol_1}
  Let $u^0 \in L^1\cap L^\infty(\R)$.
  A non-negative measurable function $u:\setR_+\times\setR\to\setR$ is an \emph{entropy solution} to \eqref{eq:y-equation} with initial condition $u^0$
  if $u\in L^1\cap L^\infty(]0,T[\times\setR)$ for all $T>0$,
  if $u^m\in L^2_\loc(0,\infty;H^1(\setR))$,
  if $u(t)\to u^0$ in $L^1(\setR)$ as $t\downarrow0$,
  and if inequality \eqref{eq:entropyformulation} is satisfied
  for all nonnegative test functions $\varphi\in C^\infty_c(\setR_+\times\setR)$, and for all $k\in\setR_+$.
\end{defin}
\begin{rmk}
  The inclusion of the very particular dissipation term in inequality \eqref{eq:entropyformulation} seems a bit ad hoc.
  This term, however, plays a key role in the proof of uniqueness, see Section \ref{sec:uniqueness}.
  Moreover, the choice of this term is less arbitrary than it appears:
  \emph{any} smooth and uniformly convergent approximation of the sign function
  could be used in place of the mollification $\sgn_\eps$ there.

  Finally, we remark that a substantial part of the article \cite{karlsen2003} is devoted to proving that
  all functions $u$ of a certain regularity which satisfy \eqref{eq:entropyformulation} \emph{without} the dissipation term
  actually satisfy it also \emph{with} the dissipation term.
  For us, the dissipation term results very naturally from our construction of solutions $u$.
\end{rmk}
\begin{rmk}
  \label{rmk:tensor}
  For a function $u$ of the specified regularity, in order to be an entropy solution it is sufficient
  that inequality \eqref{eq:entropyformulation} is satisfied for all test function $\varphi$ of the form $\varphi(t,y)=\theta(t)\phi(y)$
  with arbitrary non-negative functions $\theta\in C^\infty_c(\setR_+)$ and $\phi\in C^\infty_c(\setR)$.
  This follows immediately since the latter products lie dense in $C^\infty_c(\setR_+\times\setR)$, see e.g. \cite[Theorem 4.3.1]{friedlander}.
\end{rmk}
In difference to \cite{karlsen2003},
we do not require --- and, in fact, cannot prove --- that $u$ is a continuous curve in $L^1(\setR)$ for $t>0$.

\subsection{The JKO scheme}\label{subsec:JKO}
The JKO scheme \cite{JKO} is a variant of the time-discrete implicit Euler approximation
for the solution of gradient flows in the non-smooth metric setting of the $L^2$-Wasserstein distance.
In its core, it is a special case of De Giorgi's minimizing movement scheme \cite{degiorgi}; see the book \cite{AGS} for an extensive theory.

We apply the JKO scheme
to the functional $\poten$ defined in \eqref{eq:poten} with $a$ defined implicitly in Proposition \ref{prop:change_var}
and inital condition $\rho^0:=\rescale[u^0]$, see \eqref{eq:defscaling}.
To this end, let a time step $\tau>0$ be given.
For every $\sigma \in \dens$, introduce the associated Yoshida penalization $\poten_\tau(\cdot;\sigma)$ of $\poten$ by
\begin{align*}
  \poten_\tau(\rho;\sigma) =  \frac1{2\tau}\wass(\rho,\sigma)^2 + \poten(\rho).
\end{align*}
Let further an initial condition $\rho^0\in \dens$ with $\poten(\rho^0)<+\infty$ be given.
We define a sequence of densities $\rho_\tau^n\in\dens$ inductively as follows:
\begin{enumerate}
\item $\rho_\tau^0:=\rho^0$.
\item For $n\ge1$, let $\rho_\tau^n\in\dens$ be the (unique global) minimizer of $\poten_\tau(\cdot;\rho_\tau^{n-1})$.
\end{enumerate}
In Lemma \ref{lem:scheme_well_posed} we prove well-definiteness of this scheme.
In the following, we denote by $\bar\rho_\tau:[0,\infty[\to\dens$ the piecewise constant interpolation of the sequence $(\rho_\tau^n)_{n\in\setN}$,
with
\begin{align}
  \label{eq:interpol}
  \bar\rho_\tau(t) = \rho_\tau^n \quad \text{for $(n-1)\tau<t\le n\tau$}.
\end{align}

\subsection{Main results}
\label{subsec:results}
We are now in the position to give the precise statement of the two main results of this paper.
\begin{thm}\label{thm:main}
  Let $\rho^0\in\dens\cap L^\infty(\R)$, and let $a$ satisfy conditions (a1)\&(a2) of Proposition \ref{prop:change_var}.
  Define discrete curves $\bar\rho_\tau:[0,\infty[\to\dens$ by means of the JKO scheme from Section \ref{subsec:JKO} above.
  Then every vanishing sequence $(\tau_k)_{k\in\setN}$ of time steps contains a subsequence (not relabeled)
  such that the $\rho_{\tau_k}$ converge
  --- in $L^m(]0,T[\times\setR)$, and also uniformly in $\wass$ on each time interval $[0,T]$ ---
  to a curve $\rho_*:[0,\infty[\to\dens$ that is continuous with respect to $\wass$.
  The rescaled function $u_*=\rescale^{-1}[\rho_*]$ is an entropy solution to \eqref{eq:y-equation} in the sense of Definition \ref{def:entropy_sol_1},
  with initial condition $u^0=\rescale^{-1}[\rho^0]$.
\end{thm}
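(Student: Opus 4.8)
The plan is to run the JKO scheme of Section~\ref{subsec:JKO} for $\poten$ in the $x$-variables, extract a limit curve $\rho_*$ by compactness, prove a time-discrete version of the entropy inequality \eqref{eq:entropyformulation} along the scheme and pass to the limit to get it for $u_*=\rescale^{-1}[\rho_*]$, and finally quote the uniqueness theorem of Section~\ref{sec:uniqueness} to identify $u_*$ and conclude that the limit is subsequence-independent. For the \emph{a priori estimates}, Lemma~\ref{lem:scheme_well_posed} gives well-definiteness, and testing the minimality of $\rhojko{n}$ against $\rhojko{n-1}$ itself produces simultaneously the monotonicity $\poten(\rhojko{n})\le\poten(\rhojko{n-1})\le\poten(\rho^0)$ and the classical total-square-distance bound $\sum_{n\ge1}\wass(\rhojko{n},\rhojko{n-1})^2\le2\tau\,\poten(\rho^0)$. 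Since $a\ge\mina>0$, the energy bound controls $\|\rhojko{n}\|_{L^m(\R)}$; the total-square-distance bound controls the second moments uniformly on compact time intervals by the triangle inequality for $\wass$; and the Euler--Lagrange equation of the $n$-th step, $a(\rhojko{n})^{m-1}+\tfrac1\tau\psi_n=\mathrm{const}$ on the support of $\rhojko{n}$ with $\psi_n$ a Kantorovich potential toward $\rhojko{n-1}$, propagates an $L^\infty$ bound (with a constant growing at most exponentially in $t$, because of the $b_y$-term). The total-square-distance bound makes $\bar\rho_\tau$ and its geodesic interpolant approximately $\tfrac12$-H\"older in $\wass$, so a refined Arzel\`a--Ascoli argument \cite[Proposition~3.3.1]{AGS} yields a subsequence converging uniformly in $\wass$ on every $[0,T]$ to a $\wass$-continuous, indeed $\tfrac12$-H\"older, curve $\rho_*$. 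To upgrade this to strong convergence in $L^m(]0,T[\times\R)$ and to secure $u_*^m\in L^2_\loc(0,\infty;H^1(\R))$, I would apply the flow interchange Lemma~\ref{lem:fi} with the heat (or porous-medium) flow as auxiliary $\lambda$-flow, getting a $\tau$-uniform bound on $\rho_\tau^m$ in $L^2_\loc(0,\infty;H^1(\R))$, which together with the $\wass$-equicontinuity and a Wasserstein Aubin--Lions compactness lemma gives the strong $L^m$ convergence.

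The core is the \emph{discrete entropy inequality}. Fix $k\ge0$, a non-negative $\phi\in C^\infty_c(\R)$ and a mollification parameter $\eps>0$. For each $n\ge1$ I would perturb the minimizer $\rhojko{n}$ along the auxiliary flow $\mathsf S^\eps$ generated by a regularized Kruzkov entropy $\mathcal H_\eps$ attached to $\phi$ and $k$; its underlying evolution is a variable-coefficient degenerate reaction--diffusion equation built from $\phi$, $k$ and the $\eps$-mollifier, precisely of the type whose $\lambda$-contractivity in $\wass$ (equivalently the EVI needed to invoke Lemma~\ref{lem:fi}) is supplied by Lemma~\ref{lem:z}. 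The flow interchange inequality then reads
\begin{align*}
  \tau\,\mathcal D^{\mathsf S^\eps}\poten(\rhojko{n})
  \;\le\;\mathcal H_\eps(\rhojko{n-1})-\mathcal H_\eps(\rhojko{n})-\tfrac{\lambda}{2}\,\wass(\rhojko{n},\rhojko{n-1})^2 ,
\end{align*}
with $\mathcal D^{\mathsf S^\eps}\poten(\rhojko{n})=\limsup_{s\downarrow0}s^{-1}\big(\poten(\rhojko{n})-\poten(\mathsf S^\eps_s\rhojko{n})\big)$. Computing this dissipation explicitly by integration by parts — using the regularity of $\rhojko{n}$ from its Euler--Lagrange equation, then transforming back to the $y$-variable by $\rescale^{-1}$ and $\trf$ — reproduces, exactly as in the formal computation \eqref{eq:prepareentropy}--\eqref{eq:entropyformulation}, the flux term $\sgn_\eps(u^m-k^m)\big(\big[(u^m)_y+b(u^m-k^m)\big]\phi_y-b_yk^m\phi\big)$ plus the non-negative dissipation $\sgn_\eps'(u^m-k^m)\big[(u^m)_y\big]^2\phi$, up to remainders that vanish as $\eps\downarrow0$. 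Multiplying by $\theta(n\tau)$ for a non-negative $\theta\in C^\infty_c(\R_+)$, summing over $n$, doing a discrete integration by parts in time to transfer the increment onto $\theta$, and using $\sum_n\wass(\rhojko{n},\rhojko{n-1})^2\le2\tau\,\poten(\rho^0)$ to absorb the $\lambda$-terms into an $O(\tau)$ error, one obtains a discrete form of \eqref{eq:entropyformulation} with test function $\theta(t)\phi(y)$. Passing $\tau\downarrow0$ (strong $L^m$ convergence in the flux terms, lower semicontinuity in the dissipation term, $\wass$-equicontinuity) and then $\eps\downarrow0$ yields \eqref{eq:entropyformulation} for $u_*$; by Remark~\ref{rmk:tensor} it then holds for all $\varphi\in C^\infty_c(\R_+\times\R)$, the sharper lower bound being recorded in Proposition~\ref{prop:karlsen}. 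The genuine difficulty is concentrated exactly here: $\mathsf S^\eps$ must be designed so as to be geodesically $\lambda$-convex in spite of the variable coefficient $a$ and the degeneracy (this is what Lemma~\ref{lem:z} is built for) while at the same time $\mathcal D^{\mathsf S^\eps}\poten$ reproduces the Kruzkov flux/dissipation structure modulo $\eps$- and $\tau$-vanishing errors, and one must establish enough a priori regularity of the JKO minimizers $\rhojko{n}$ to legitimize differentiating $\poten$ along $\mathsf S^\eps_s$ at $s=0^+$.

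For the \emph{initial datum and uniqueness}: the $\wass$-continuity of $\rho_*$ at $t=0$ together with the uniform $L^\infty$ bound upgrades to $\rho_*(t)\to\rho^0$ strongly in $L^1(\R)$, hence $u_*(t)\to u^0$ in $L^1(\R)$; combined with the integrability and the $L^2_\loc(H^1)$ regularity obtained above, $u_*$ satisfies every requirement of Definition~\ref{def:entropy_sol_1}. Finally, the doubling-of-variables argument of Section~\ref{sec:uniqueness} shows that the entropy solution with datum $u^0$ is unique, so $u_*$ — and therefore $\rho_*$ — does not depend on the chosen subsequence, which in addition forces the whole family $\bar\rho_\tau$ to converge.
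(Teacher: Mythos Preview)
Your overall strategy matches the paper's almost step by step: energy/moment estimates, flow interchange with the heat flow to get the $H^1$ bound on $\rho^{m/2}$, Arzel\`a--Ascoli plus an Aubin--Lions type argument for strong $L^m$ convergence, flow interchange with a regularized Kru\v{z}kov entropy to get the entropy inequality, and finally the doubling argument for uniqueness. Two points, however, need correction.

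\textbf{The auxiliary flow for the entropy inequality is not $\lambda$-contractive as you wrote it.} You describe $\mathsf S^\eps$ as the flow of a ``variable-coefficient \emph{degenerate} reaction--diffusion equation'' and invoke Lemma~\ref{lem:z}. But the hypotheses of Lemma~\ref{lem:z} include the uniform ellipticity bound $c\le\eta F_{\eta\eta}\le C$ with $c>0$, which rules out degeneracy. For the pure Kru\v{z}kov functional $\int S_\eps(\eta/\trf')\,\phi\circ\trf\,\trf'\,\dd x$ one has $\eta F_{\eta\eta}\propto S_\eps''$, which vanishes outside a thin neighbourhood of $\{\eta=k\trf'\}$; the lower bound fails and Lemma~\ref{lem:z} does not apply. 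The paper fixes this by adding a viscous term $\nu\ent(\eta)$ to the auxiliary functional (see \eqref{eq:1}), applies Lemma~\ref{lem:z} with some $\kappa_{\eps,\nu}$, passes $\tau\downarrow0$ first, then uses the uniform bounds on $\ent(\rho_*(t))$ to let $\nu\downarrow0$, and only afterwards $\eps\downarrow0$. Without this extra regularization your flow-interchange step has no EVI to rest on.

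\textbf{The $L^\infty$ bound via the Euler--Lagrange equation is not convincing.} From $a(\rhojko{n})^{m-1}+\tau^{-1}\psi_n=\text{const}$ on each support component you would need a one-sided bound on the Kantorovich potential $\psi_n$ that is uniform in $n$ and $\tau$; on the whole line this is not available without further argument, and the presence of the variable coefficient $a$ defeats the standard ``$L^\infty$ is non-increasing for porous medium JKO'' argument. The paper obtains the bound by yet another flow interchange (Proposition~\ref{prop:Linfty}), using the auxiliary functional $\int\heav_\eps\big(a^{1/(m-1)}\eta-k\big)a^{-1/(m-1)}\dd x+\nu\ent(\eta)$; the dissipation of $\poten$ along this flow is shown to be non-negative up to a $\nu$-small term, which yields the $t$-uniform bound $\rho_*(t,x)\le k\,a(x)^{-1/(m-1)}$. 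This $L^\infty$ control is what allows the upgrade from $\rho_*^{m/2}\in L^2(0,T;H^1)$ to $\rho_*^m\in L^2(0,T;H^1)$ required by Definition~\ref{def:entropy_sol_1}. Your sketch should be replaced by (or at least supplemented with) this argument.

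A minor remark: the $L^1$-continuity at $t=0$ does not follow from $\wass$-continuity plus $L^\infty$ alone; the paper argues via $\poten(\rho_*(t))\to\poten(\rho^0)$ (lower semicontinuity in $\wass$ in both directions), hence $L^m$-norm convergence, hence strong $L^m$ convergence by uniform convexity, and then interpolates with the second moment to get $L^1$ (Proposition~\ref{prop:right_cont_zero}).
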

The proof of Theorem \ref{thm:main} is performed in Section \ref{sec:time_discrete}.
\begin{thm}\label{thm:uniqueness}
  For each initial condition $u^0\in\dens\cap L^\infty(\R)$,
  there exists only one entropy solution to \eqref{eq:y-equation} in the sense of Definition \ref{def:entropy_sol_1}.
  Consequently, \emph{every} sequence of time-discrete approximations $\bar\rho_{\tau_k}$ converges to the same limit $\rho_*$.
\end{thm}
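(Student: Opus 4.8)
The plan is to prove uniqueness by the doubling-of-variables technique of Kru\v{z}kov, in the form adapted to degenerate parabolic equations by Carrillo \cite{Car99} and Karlsen et al.\ \cite{karlsen2003}; the dissipation term kept in Definition \ref{def:entropy_sol_1} is precisely what makes the argument close. Let $u$ and $\hat u$ be two entropy solutions with the same initial datum $u^0$, and fix $T>0$. In the entropy inequality \eqref{eq:entropyformulation} for $u$, written in variables $(t,y)$, I take the level $k=\hat u(s,z)$; in the inequality for $\hat u$, written in variables $(s,z)$, I take $k=u(t,y)$; in both I use a test function of the form
\[
  \varphi(t,y,s,z)=\psi\Big(\tfrac{t+s}{2},\tfrac{y+z}{2}\Big)\,\delta_{\e_0}\!\Big(\tfrac{t-s}{2}\Big)\,\delta_{\e_1}\!\Big(\tfrac{y-z}{2}\Big),
\]
with $\psi\in C^\infty_c(\setR_+\times\setR)$ non-negative and $\delta_{\e_0},\delta_{\e_1}$ the mollifiers of \eqref{eq:mollifier}.

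Adding the two inequalities and integrating over all four variables, the time-derivative and first-order transport terms combine --- after the $(s,z)$-integrations and upon letting $\e_1\downarrow0$ so that $z\to y$ --- into $\sgn(u-\hat u)\,b(y)\,(u^m-\hat u^m)$ paired against $\psi_y$, plus the zeroth-order contribution $\sgn(u-\hat u)\,b_y(y)\,(u^m-\hat u^m)\,\psi$; here the symmetry of $\varphi$ in $(y,z)$ is what kills the unwanted $\delta_{\e_1}'$-terms. The delicate point is the second-order diffusion block: the two dissipation terms $\dissip(u)$ and $\dissip(\hat u)$, together with the cross term generated by $(u^m)_y$ and $(\hat u^m)_z$ in the doubled variables, combine --- using $u^m,\hat u^m\in L^2_{\loc}(0,\infty;H^1(\setR))$ and the monotonicity $\sgn_\e'\ge0$ noted after \eqref{eq:mollifier} --- into a quantity of the type $\int \sgn_\e'(u^m-\hat u^m)\,\big|(u^m)_y-(\hat u^m)_y\big|^2\,\psi\ge0$, so that, passing to the limit $\e_1\downarrow0$, then $\e_0\downarrow0$, and finally $\e\downarrow0$ in the mollification of the sign, the entire second-order block is non-negative and may simply be discarded. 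This is the step I expect to be the main obstacle: it requires the careful bookkeeping of \cite{karlsen2003}, the use of Lebesgue points of $u$ and $\hat u$ to collapse onto the diagonal $s=t$, $z=y$, and the $H^1$-regularity of $u^m,\hat u^m$ to give pointwise-a.e.\ sense to the gradient terms.

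It then remains to run the usual Kru\v{z}kov--Gronwall argument. Since $u,\hat u$ are bounded on $]0,T[\times\setR$, the map $r\mapsto r^m$ is Lipschitz on the relevant range, hence $|u^m-\hat u^m|\le C\,|u-\hat u|$; combined with $b\in W^{1,\infty}(\setR)$ this turns the surviving inequality into
\[
  \intT\intX |u-\hat u|\,\psi_t\dd y\dd t \;-\; \intT\intX \sgn(u-\hat u)\,b\,(u^m-\hat u^m)\,\psi_y\dd y\dd t \;\ge\; -\,C\intT\intX |u-\hat u|\,\psi\dd y\dd t
\]
for every non-negative $\psi\in C^\infty_c(\setR_+\times\setR)$. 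Choosing $\psi(t,y)=\chi(t)\,\phi_R(y)$ with $\chi$ a smooth approximation of $\indy_{[0,t_0]}$ and $\phi_R$ a cutoff equal to $1$ on $[-R,R]$ with $|\phi_R'|\le C/R$, the transport term is of order $R^{-1}\intT\intX|u-\hat u|$ and vanishes as $R\to\infty$ because $u,\hat u\in L^1(]0,T[\times\setR)$, while the contribution at $t=0$ drops since $u(t),\hat u(t)\to u^0$ in $L^1(\setR)$ as $t\downarrow0$. One is left, for a.e.\ $t_0\in\,]0,T[$, with $\intX|u(t_0)-\hat u(t_0)|\dd y\le C\int_0^{t_0}\!\intX|u-\hat u|\dd y\dd t$, and Gronwall's lemma forces $u=\hat u$ a.e.\ on $]0,T[\times\setR$; as $T$ was arbitrary, the entropy solution is unique. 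Finally, the last assertion follows from Theorem \ref{thm:main} by a standard subsequence argument: any vanishing sequence $(\tau_k)$ admits a subsequence along which $\bar\rho_{\tau_k}$ converges to some $\rho_*$ with $\rescale^{-1}[\rho_*]$ an entropy solution for $u^0$; uniqueness makes this limit independent of the subsequence, whence the whole family $\bar\rho_{\tau_k}$ converges to it.
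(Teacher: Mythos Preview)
Your strategy is correct and essentially mirrors the paper's proof (Section~\ref{sec:uniqueness}), which follows \cite{karlsen2003} via doubling of variables, completes the square using the retained dissipation term, collapses to the diagonal, and concludes by Gronwall. Two points of bookkeeping are glossed over, however. First, the $\delta_{\e_1}'$-contributions from the convection are \emph{not} killed by symmetry: after adding the two inequalities one is left with a term of the form $(b(y)-b(z))\,|u^m-\hat u^m|\,\psi\,\partial_z\delta_{\e_1}$, which is handled by the Lipschitz bound on $b$ (so that $(b(y)-b(z))\partial_z\delta_{\e_1}$ stays bounded as $\e_1\downarrow0$) and feeds into the constant $C$ on the right-hand side---see \eqref{eq:b_quotient}. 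Second, the diffusion block does not disappear entirely: after the cross terms complete the square with the two dissipation pieces (giving the non-negative $\int\sgn_\eps'(u^m-\hat u^m)\,|(u^m)_y-(\hat u^m)_z|^2\varphi$ that you correctly discard), a residual term $|u^m-\hat u^m|\,\psi_{yy}$ survives on the left-hand side, as in \eqref{eq:kruz2}. This term is harmless---it vanishes as $R\to\infty$ for exactly the same reason your transport term does---but it should be tracked.
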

The proof of Theorem \ref{thm:uniqueness} is taken from \cite{karlsen2003}, with minor modifications.
In order to make the present paper self-contained, we outline the argument and review the relevant calculations in Section \ref{sec:uniqueness}.

\begin{rmk}\label{remark:alternative_result}
The notion of entropy solution can be posed as well for the equation \eqref{eq:x-equation} without passing through the scaling established in Proposition \ref{prop:change_var}, and a uniqueness theorem in the spirit of Theorem \ref{thm:uniqueness} can be derived with arguments in \cite{karlsen_ohlberger}. Nevertheless, we opted for developing our theory based on the notion of entropy solution for the equation \eqref{eq:y-equation}, which can be seen as a porous medium equation with a nonlinear convection perturbation, and it is therefore of interest in the applications.
\end{rmk}

\section{Convex functionals and contractive gradient flows}\label{sec:gradient_flow}
Contractive gradient flows (or ``$\kappa$-flows'', see Definition \ref{def:k_flow} below) constitute our key tool
for the derivation of a priori estimates on solutions $\rho$ to \eqref{eq:x-equation},
or, equivalently, for solutions $u$ to \eqref{eq:y-equation}.
First, we recall the definition of $\kappa$-flows and their relation to geodesically $\lambda$-convex functionals.
Then we prove $\kappa$-contractivity of a class of flows that is relevant for our needs.
We refer to \cite{AGS,villani_book,villani_book_2} as references on the general gradient flow theory on Wasserstein spaces.

\subsection{Fundamental definitions and relations}
%
 A curve $\rho:I\to\dens$ is called \emph{absolutely continuous in $\wass$} on the interval $I\subset\setR$
 if there exists a function $g\in L^1_\loc(I)$ such that
 \begin{equation*}
   \wass(\rho(t),\rho(s))\leq \Big|\int_s^t g(\tau)d\tau\Big| \quad \text{for all $t,s\in I$}.
 \end{equation*}
 An absolutely continuous curve $\rho:[0,1]\to\dens$ is a \emph{constant speed geodesic} if
 \begin{align*}
   \wass(\rho(s),\rho(t)) = |t-s|\wass(\rho(0),\rho(1)) \quad \text{for all $t,s\in[0,1]$}.
 \end{align*}

\begin{defin}[$\kappa$-flow]\label{def:k_flow}
  A semigroup $\flow_\Psi:[0,\infty[\times\dens\to\dens$ is a \emph{$\kappa$-flow} for a functional $\Psi:\dens\to\setR\cup\{+\infty\}$ with respect to $\wass$
  if, for arbitrary $\rho\in\dens$,
  the curve $s\mapsto\flow_\Psi^s[\rho]$ is absolutely continuous on $[0,\infty[$
  and satisfies the evolution variational inequality (EVI)
  \begin{align}
    \label{eq:evi}
    \frac12\frac{\dd^+}{\dd\sigma}\Big|_{\sigma=s} \wass\big( \flow_\Psi^\sigma[\rho],\tilde\rho \big)^2
    + \frac\kappa2\wass(\flow_\Psi^s[\rho],\tilde\rho)^2 \le \Psi(\eta) - \Psi(\flow^\Psi_s[\rho])
  \end{align}
  for all $s>0$,
  with respect to every comparison measure $\tilde\rho\in\dens$ for which $\Psi(\tilde\rho)<\infty$.
\end{defin}
\begin{rmk}
  The symbol $\dd^+/\dd\sigma$ stands for the limit superior of the respective difference quotients,
  and equals to the derivative if the latter exists.
\end{rmk}
The fact that a functional $\Psi$ admits a $\kappa$-flow is equivalent to the $\lambda$-convexity of $\Psi$ along geodesics.
The characterization of $\kappa$-flows by convexity will not play a role in our further considerations,
but we cite the respective result for the sake of completeness, see \cite{AGS} for further details.
\begin{thm}\label{thm:convexity}
  Assume that the functional $\Psi:\dens\rightarrow\R\cup\{+\infty\}$ is \emph{$\lambda$-convex (along geodesics)},
  with a modulus of convexity $\lambda \in \R$.
  That is, along every constant speed geodesic $\rho:[0,1]\to\dens$,
  \begin{equation}\label{eq.definition_convexity}
    \Psi[\rho(t)] \leq (1-t)\Psi[\rho(0)] + t\Psi[\rho(1)] -\frac{\lambda}{2} t(1-t) \wass(\rho(0),\rho(1))^2
  \end{equation}
  holds for every $t\in[0,1]$.
  Then $\Psi$ possesses a uniquely determined $\kappa$-flow, with some $\kappa\ge\lambda$.
  On the other hand, if a functional $\Psi$ possesses a $\kappa$-flow, then it is $\lambda$-convex, with some $\lambda\ge\kappa$.
\end{thm}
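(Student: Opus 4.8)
The statement is the equivalence between geodesic $\lambda$-convexity of $\Psi$ and the existence of a $\kappa$-flow; the plan is to establish the two implications separately. The direction ``convexity $\Rightarrow$ $\kappa$-flow'' is the substantial one and I would obtain it by a quantitative analysis of the minimizing movement scheme; the converse I would read off directly from the evolution variational inequality \eqref{eq:evi}. I would not reprove the whole Ambrosio--Gigli--Savar\'e machinery, only indicate how it specializes here. It helps to recall at the outset that in one space dimension the monotone rearrangement $\mu\mapsto G_\mu$ of \eqref{eq:wassbyidf} is an isometry of $(\overline\dens,\wass)$ onto a closed convex subset of $L^2(0,1)$: the space is \emph{flat}, its geodesics are the line segments of this Hilbert embedding, and $\sigma'\mapsto\wass(\sigma',\sigma)^2$ is $2$-convex along every geodesic. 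In particular, ordinary geodesic convexity of $\Psi$ suffices and the notion of generalized geodesics is not needed.

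For ``convexity $\Rightarrow$ $\kappa$-flow'' I would run the scheme of Section \ref{subsec:JKO} with $\Psi$ in place of $\poten$: for $\rho$ with $\Psi(\rho)<\infty$, let $\rho_\tau^n$ minimize $\sigma'\mapsto\frac1{2\tau}\wass(\sigma',\rho_\tau^{n-1})^2+\Psi(\sigma')$. For $\tau$ so small that $\frac1\tau+\lambda>0$ this functional is uniformly geodesically convex, so its minimizer is unique; existence follows from lower semicontinuity and coercivity of $\Psi$. The classical a priori bound $\sum_{n\ge1}\frac1{2\tau}\wass(\rho_\tau^n,\rho_\tau^{n-1})^2\le\Psi(\rho)-\inf\Psi$ makes the interpolants uniformly $\tfrac12$-H\"older in $\wass$ on bounded time intervals, and since sublevels of $\Psi$ are $\wass$-compact, a refined Arzel\`{a}--Ascoli argument extracts a limit curve $\sigma\mapsto\flow_\Psi^\sigma[\rho]$. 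The key point is a \emph{discrete} version of \eqref{eq:evi}: comparing $\rho_\tau^n$ with the point at parameter $t$ on the geodesic from $\rho_\tau^n$ to an arbitrary $\eta$ with $\Psi(\eta)<\infty$, and using $\lambda$-convexity of $\Psi$ together with the $2$-convexity of $\sigma'\mapsto\wass(\sigma',\rho_\tau^{n-1})^2$ along that geodesic, one obtains after $t\downarrow0$
\[
  \tfrac1{2\tau}\big[\wass(\rho_\tau^n,\eta)^2-\wass(\rho_\tau^{n-1},\eta)^2\big]+\tfrac\lambda2\,\wass(\rho_\tau^n,\eta)^2\le\Psi(\eta)-\Psi(\rho_\tau^n).
\]
Summation, affine interpolation in time, lower semicontinuity of $\Psi$, and the passage $\tau\downarrow0$ promote this to \eqref{eq:evi} with $\kappa=\lambda$. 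Finally, \eqref{eq:evi} is self-improving: any two solutions $\rho^{(1)},\rho^{(2)}$ satisfy $\frac{\dd^+}{\dd t}\wass(\rho^{(1)}(t),\rho^{(2)}(t))^2\le-2\kappa\,\wass(\rho^{(1)}(t),\rho^{(2)}(t))^2$, which yields uniqueness (hence the semigroup property of $\flow_\Psi$) and $\kappa$-contractivity.

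For ``$\kappa$-flow $\Rightarrow$ convexity'' I would fix a constant-speed geodesic $\rho:[0,1]\to\dens$ joining two points of finite energy, set $h(t):=\Psi(\rho(t))$ and $\ell:=\wass(\rho(0),\rho(1))$, and apply \eqref{eq:evi} at starting point $\rho(t)$ with comparison measure first $\rho(0)$, then $\rho(1)$, letting the flow-time tend to $0$. Since an EVI flow is in particular a curve of maximal slope, a first-variation computation in the flat space $(\overline\dens,\wass)$ expresses the two resulting left-hand sides through a single quantity (the one-sided derivative $h'(t)$), with opposite-signed weights $t$ and $-(1-t)$, so that the two instances of \eqref{eq:evi} read $t\,h'(t)+h(t)\le h(0)-\tfrac\kappa2 t^2\ell^2$ and $-(1-t)\,h'(t)+h(t)\le h(1)-\tfrac\kappa2(1-t)^2\ell^2$. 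Taking the convex combination $(1-t)$ times the first plus $t$ times the second cancels $h'(t)$ and leaves exactly \eqref{eq.definition_convexity} with $\lambda=\kappa$; the points where $\Psi$ fails to be differentiable along $\rho$ are handled with the usual care for one-sided derivatives.

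The step I expect to be the main obstacle is the passage $\tau\downarrow0$ in the discrete EVI: one must verify that no dissipation is lost --- that $\sum\frac1{2\tau}\wass(\rho_\tau^n,\rho_\tau^{n-1})^2$ genuinely controls the metric derivative of the limit curve --- and that $\wass(\rho_\tau^n,\eta)^2$ converges to $\wass(\flow_\Psi^\sigma[\rho],\eta)^2$ with enough uniformity to preserve the inequality in the limit. This is precisely the technical heart of the theory developed in \cite{AGS}, to which I would defer for the remaining details.
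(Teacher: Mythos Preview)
The paper does not prove this theorem at all: it is stated without proof, with the preceding sentence ``we cite the respective result for the sake of completeness'' referring the reader to \cite{AGS}. Your proposal is therefore not competing with any argument in the paper; rather, you have sketched the standard proof from the cited reference (minimizing movements plus a discrete EVI for the forward direction, and evaluation of \eqref{eq:evi} at vanishing flow-time against the two endpoints of a geodesic for the reverse). That sketch is correct in outline and in fact yields the sharper conclusion $\kappa=\lambda$ in both directions, whereas the paper's phrasing with ``some $\kappa\ge\lambda$'' and ``some $\lambda\ge\kappa$'' is unnecessarily slack.

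Two small remarks on your sketch. First, in the reverse direction your displayed inequalities carry the wrong sign on $h'(t)$: the EVI with comparison point $\rho(0)$ gives $h(t)-t\,h'(t)\le h(0)-\tfrac\kappa2 t^2\ell^2$, not $h(t)+t\,h'(t)\le\ldots$, and similarly for $\rho(1)$. This is harmless, since the $h'(t)$ contributions cancel in the convex combination $(1-t)\cdot\text{(first)}+t\cdot\text{(second)}$ either way, and the conclusion \eqref{eq.definition_convexity} is unaffected. Second, for the forward direction you invoke ``lower semicontinuity and coercivity of $\Psi$'' to obtain minimizers; these hypotheses are not stated in the theorem but are of course part of the standing assumptions in \cite{AGS}, so this is a gap in the paper's formulation rather than in your argument.
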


\subsection{A special class of $\kappa$-flows}\label{subsec:contractive}
In this section, we derive a sufficient condition under which the functional
\begin{align}
  \label{eq:generalpsi}
  \Psi(\eta) = \int_\setR F(x,\eta(x))\dd x
\end{align}
for a sufficiently smooth function $F:\setR\times\setR_+\to\setR$ admits a $\kappa$-flow.
For a concise formulation of that condition, we introduce the adjoint function $H:\setR\times\setR_+\to\setR$
by
\begin{align}
  \label{eq:h}
  H(x,\xi) = \xi F(x,1/\xi),
\end{align}
which satisfies
\begin{align}
  \label{eq:hprop}
  H_x(x,\xi) = \xi F_x(x,1/\xi),\quad
  H_\xi (x,\xi) = F(x,1/\xi) - 1/\xi\,F_\eta(x,1/\xi).
\end{align}
\begin{lem}
  \label{lem:z}
  Let $F\in C^2(\setR\times\setR_+)$ be given,
  and assume that there exist constants $0<c<C$ such that
  \begin{align}
    \label{eq:elliptic2}
    c\le\eta F_{\eta\eta}(x,\eta)\le C \quad \text{and} \quad |F_{x\eta}(x,\eta)|\leq C
    \quad \text{for all $(x,\eta)\in\setR\times\setR_+$}.
  \end{align}
  Assume further that there is some $\kappa\in\setR$ such that
  \begin{align}
    \label{eq:joint}
    (x,\xi)\mapsto H(x,\xi)-\frac\kappa2 x^2
  \end{align}
  is (jointly) convex on $\setR\times\setR_+$.
  Then, the solution operator $\flow_\Psi$ of the evolution equation
  \begin{align}
    \label{eq:evol}
    \partial_t \eta = \dff_x( \eta \dff_x[F_\eta(x,\eta)])
  \end{align}
  is a $\kappa$-flow for the functional $\Psi$ from \eqref{eq:generalpsi}.
\end{lem}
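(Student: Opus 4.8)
The plan is to identify \eqref{eq:evol} as the Wasserstein gradient flow of $\Psi$ and then verify that the hypotheses \eqref{eq:elliptic2}–\eqref{eq:joint} are exactly what is needed to invoke the abstract characterization in Theorem \ref{thm:convexity}: namely, that $\Psi$ is $\kappa$-convex along $\wass$-geodesics. So the whole argument reduces to checking geodesic $\kappa$-convexity of $\Psi(\eta)=\int_\setR F(x,\eta)\dd x$. First I would recall that a constant-speed geodesic $(\rho_t)_{t\in[0,1]}$ in $(\dens,\wass)$ on the line is described explicitly through pseudo-inverse distribution functions: if $G_0,G_1$ are the pseudo-inverses of the endpoints, then $G_t=(1-t)G_0+tG_1$, and the density is recovered by $\rho_t(G_t(\omega))=1/G_t'(\omega)$. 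This is where the adjoint function $H(x,\xi)=\xi F(x,1/\xi)$ enters: substituting $x=G_t(\omega)$, $\xi=G_t'(\omega)=(1-t)G_0'(\omega)+tG_1'(\omega)$ in the change-of-variables formula gives
\begin{align*}
  \Psi(\rho_t)=\int_\setR F(G_t(\omega),1/G_t'(\omega))\,G_t'(\omega)\dd\omega=\int_0^1 H\big(G_t(\omega),G_t'(\omega)\big)\dd\omega.
\end{align*}
Since $\omega\mapsto(G_t(\omega),G_t'(\omega))$ is an affine function of $t$ with values in $\setR\times\setR_+$, joint convexity of $(x,\xi)\mapsto H(x,\xi)-\tfrac\kappa2 x^2$ immediately yields, after adding back $\tfrac\kappa2 G_t(\omega)^2$ and using $\int_0^1 G_t(\omega)^2\dd\omega=\int x^2\rho_t\dd x$ together with the known fact that the second-moment functional is $1$-convex (in fact the map $t\mapsto\int x^2\rho_t$ is a quadratic polynomial whose second derivative is $\wass(\rho_0,\rho_1)^2$), exactly inequality \eqref{eq.definition_convexity} with modulus $\kappa$. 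Hence $\Psi$ is geodesically $\kappa$-convex, so by Theorem \ref{thm:convexity} it admits a (unique) $\kappa$-flow $\flow_\Psi$.

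It then remains to verify that the curves produced by this $\kappa$-flow solve \eqref{eq:evol}, i.e. that \eqref{eq:evol} is genuinely the Wasserstein gradient flow of $\Psi$. Here I would use the standard computation of the Wasserstein subdifferential: the ``velocity field'' driving the gradient flow of a functional of the form $\int F(x,\eta)\dd x$ is $-\dff_x\big(F_\eta(x,\eta)\big)$, and the continuity equation $\partial_t\eta+\dff_x(\eta v)=0$ with $v=-\dff_x[F_\eta(x,\eta)]$ is precisely \eqref{eq:evol}. The regularity assumption $F\in C^2$ together with the two-sided bound $c\le\eta F_{\eta\eta}\le C$ in \eqref{eq:elliptic2} guarantees that \eqref{eq:evol} is a nondegenerate (uniformly parabolic in the relevant variable) quasilinear equation, so that it is well posed and its solution operator is well defined; the bound $|F_{x\eta}|\le C$ controls the inhomogeneous transport part. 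Thus the solution operator of \eqref{eq:evol} coincides with $\flow_\Psi$, and being a $\kappa$-flow is inherited.

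The main obstacle is the technical matching between the ``soft'' abstract statement (existence of a $\kappa$-flow from geodesic convexity) and the ``hard'' PDE statement (that operator is exactly the solution map of \eqref{eq:evol}). Concretely, one must show that the EVI-flow's curves are weak solutions of \eqref{eq:evol} and, conversely, that \eqref{eq:evol} has at most one reasonable solution so that the identification is unambiguous; the ellipticity bounds in \eqref{eq:elliptic2} are precisely there to make \eqref{eq:evol} parabolic enough for this. A secondary subtlety is domain/integrability bookkeeping: ensuring the geodesic $(\rho_t)$ stays in $\dens$ with $\Psi(\rho_t)<\infty$ along the interpolation and that $G_t'>0$ a.e., so that the substitution into $H$ is licit — this uses that both endpoints have finite second moment and that geodesic interpolation preserves this. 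Once these points are handled, the statement follows by combining geodesic $\kappa$-convexity with Theorem \ref{thm:convexity}.
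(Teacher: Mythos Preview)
Your approach is correct in outline and constitutes a genuine alternative to the paper's argument. Both routes rest on the same core observation---that in the pseudo-inverse (Lagrangian) variable $G$, the functional becomes $\Psi(\eta)=\int_0^1 H(G,G_\omega)\dd\omega$, and joint convexity of $(x,\xi)\mapsto H(x,\xi)-\tfrac\kappa2 x^2$ is the decisive hypothesis---but they exploit it differently. You verify geodesic $\kappa$-convexity of $\Psi$ by linear interpolation of $G$ and then invoke Theorem~\ref{thm:convexity} to obtain an abstract $\kappa$-flow, deferring the identification with the solution operator of \eqref{eq:evol} to a separate step. The paper instead works directly with the PDE: it rewrites \eqref{eq:evol} in Lagrangian form as $\partial_t G = \dff_\omega[H_\xi(G,G_\omega)]-H_x(G,G_\omega)$, computes $\tfrac12\tfrac{\dd}{\dd t}\wass(\eta(t),\tilde\eta)^2$ explicitly via \eqref{eq:wassbyidf}, and obtains the EVI in one stroke from the convexity of $H-\tfrac\kappa2 x^2$. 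This is carried out first on bounded intervals $[-N,N]$ (so that the problem is genuinely uniformly parabolic with smooth positive solutions, the diffeomorphism $x\leftrightarrow\omega$ is rigorous, and boundary terms in the integration by parts vanish), and then passed to the limit $N\to\infty$ using $L^2(0,T;H^1)$/$H^{-1}$ bounds, Aubin--Lions compactness, and the time-integrated EVI of \cite{daneri}.

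Your route cleanly separates the geometric content (convexity) from the analytic content (well-posedness and identification). Its cost is precisely the step you flag as the ``main obstacle'': matching the abstract EVI-flow with the solution operator of \eqref{eq:evol} requires computing the Wasserstein subdifferential of a mixed-type functional $\int F(x,\eta)\dd x$ and establishing well-posedness of \eqref{eq:evol} on the whole line---neither is one of the model cases handled off-the-shelf in \cite{AGS}, and in practice one is led back to a calculation very close to the paper's direct EVI verification. The paper's approach is more self-contained and makes transparent where the bounds \eqref{eq:elliptic2} enter (well-posedness and compactness of the approximating problems), at the price of a more hands-on computation. One minor slip: the second $t$-derivative of $\int_0^1 G_t^2\dd\omega$ along the geodesic equals $2\wass(\rho_0,\rho_1)^2$, not $\wass(\rho_0,\rho_1)^2$; this does not affect your conclusion, since the algebraic identity $G_t^2-(1-t)G_0^2-tG_1^2=-t(1-t)(G_0-G_1)^2$ already delivers \eqref{eq.definition_convexity} with modulus exactly $\kappa$.
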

The convexity condition \eqref{eq:joint} can be rephrased as
\begin{align}
  \label{eq:convex}
  \dff^2 H =
  \begin{pmatrix}
    H_{xx}-\kappa & H_{x\xi} \\ H_{x\xi} & H_{\xi\xi}
  \end{pmatrix}
  \quad\text{is positive semi-definite for all $(x,\xi)$},
\end{align}
and the condition \eqref{eq:elliptic2} is equivalent to
\begin{align}
  \label{eq:elliptic}
  c\le\xi^2H_{\xi\xi}(x,\xi)\le C \quad \text{and} \quad |H_x(x,\xi)-\xi H_{x\xi}(x,\xi)|\leq C
  \quad \text{for all $x\in\setR,\,\xi\in\setR_+$}.
\end{align}
\begin{proof}
  We need to show that the EVI
  \begin{align}
    \label{eq:evi2}
    \frac12\frac{\dd}{\dd t}\wass(\flow_\Psi^t\eta^0,\tilde\eta)^2 + \frac\kappa2\wass(\flow_\Psi^t\eta^0,\tilde\eta)^2 \le \Psi(\tilde\eta)-\Psi(\flow_\Psi^t\eta^0)
  \end{align}
  holds for all smooth and strictly positive densities $\eta^0,\tilde\eta\in\dens$ at almost every time $t>0$.

  Instead of proving \eqref{eq:evi2} directly for $\flow_\Psi$, we prove it for a family of regularizations and then pass to the limit.
  For every $N\in\setN$, the regularized functional $\Psi_N$ is defined by $\Psi_N(\eta)=\Psi(\eta)$ if $\eta\in\dens$ is supported in $[-N,N]$,
  and $\Psi_N(\eta)=+\infty$ otherwise.
  We show that the solution operator $\flow_N$ to the boundary value problem
  \begin{align}
    \label{eq:evolreg}
    \partial_t \eta = \dff_x( \eta F_{\eta\eta}(x,\eta)\eta_x + \eta F_{\eta x}(x,\eta)),
    \quad \eta_x(t,N)=\eta_x(t,-N)=0
  \end{align}
  is a $\kappa$-flow for $\Psi_N$, i.e., the analogue of \eqref{eq:evi2} holds.
  The densities $\tilde\eta$ and $\eta^0$ are approximated by
  \begin{align*}
    \tilde\eta_N = (\eta +\tilde\mu_N)\indy_{[-N,N]}, \quad \eta^0_N = (\eta^0+\mu^0_N)\indy_{[-N,N]},
  \end{align*}
  with constants $\tilde\mu_N,\,\mu^0_N>0$ such that $\tilde\eta_N,\,\eta^0_N\in\dens$.
  Notice that $\Psi_N(\tilde\eta_N)$, $\Psi_N(\eta^0_N)$ converge to $\Psi(\tilde\eta)$, $\Psi(\eta^0)$, respectively, as $N\to\infty$.

  By the lower bound on $\eta F_{\eta\eta}$ required in \eqref{eq:elliptic},
  the equation \eqref{eq:evolreg} is uniformly parabolic.
  Thus the boundary value problem with initial condition $\eta^0_N$ possesses a solution $\eta$ with $\eta(t)=\flow_N^t(\eta^0_N)$ for all $t\ge0$
  such that each $\eta(t)\in\prb$ restricts to a smooth and strictly positive function on $[-N,N]$ and vanishes outside of that interval.
  The associated distribution function $U:[0,\infty[\times[-N,N]\to[0,1]$ with
  \begin{align*}
    U(t;x) = \int_{-\infty}^x \eta(t;y)\dd y
  \end{align*}
  is a smooth diffeomorphism from $[-N,N]$ to $[0,1]$, for every $t\in\setR_+$;
  this follows from smoothness, strict positivity and mass preservation of $\eta$.
  The inverse $G:[0,\infty[\times[0,1]\to[-N,N]$ of $U$ with respect to $x$ satisfies by definition
  \begin{align}
    \label{eq:idf}
    G(t;U(t;x))=x \quad \text{for all $x\in[-N,N]$.}
  \end{align}
  In particular, we have
  \begin{align}
    \label{eq:idfboundary}
    G(t;0)=-N \text{ and } G(t;1)=N.
  \end{align}
  We differentiate equation \eqref{eq:idf} with respect to $x$ and with respect to $t$, respectively,
  to obtain
  \begin{align}
    \label{eq:space}
    1 &= \eta(t;x)\,\partial_\omega G(t;U(t;x)),
    \quad \text{and} \\
    \label{eq:time}
    0 &= \partial_t G(t;U(t;x)) + \partial_t U(t;x)\,\partial_\omega G(t;U(t;x)).
  \end{align}
  On the other hand, integration of \eqref{eq:evol} with respect to $x$ yields
  \begin{align*}
    \partial_t U(t;x) = \eta(t;x)\dff_x\big[F_\eta\big(x,\eta(t;x)\big)\big].
  \end{align*}
  We multiply by $\partial_\omega G(t;U(t;x))$, substitute \eqref{eq:time} for $\partial_tU$ and \eqref{eq:space} for $\eta$.
  This yields
  \begin{align}
    \label{eq:newevol}
    \partial_t G(t;U(t;x)) = -\dff_x\big[F_\eta\big(x,\eta(t;x)\big)\big].
  \end{align}
  We rewrite this (omitting the dependence on $t$) using again \eqref{eq:space} and the properties of $H$ from \eqref{eq:hprop}:
  \begin{align*}
    -\dff_x[F_\eta(x,\eta(x))]
    &= \frac1{\eta(x)} \big(\dff_x[F(x,\eta(x))-\eta(x)F_\eta(x,\eta(x))] - \dff_xF(x,\eta(x))+\partial_x\eta(x)F_\eta(x,\eta(x))\big) \\
    &= \frac1{\eta(x)} \dff_x[H_\xi(x,1/\eta(x))] - H_x(x,1/\eta(x)) \\
    &= \partial_\omega G(U(x)) \,\dff_x\big[H_\xi\big(G(U(x)),G_\omega(U(x))\big)\big] - H_x\big(G(U(x)),\partial_\omega G(U(x))\big).
  \end{align*}
  In combination with \eqref{eq:newevol}, we have
  \begin{align}
    \label{eq:evol2}
    \partial_t G = \dff_\omega[H_\xi(G,\partial_\omega G)] - H_x(G,\partial_\omega G).
  \end{align}
  Further, observe that a change of variables $\omega=U(x)$ leads to
  \begin{align}
    \nonumber
    \Psi_N(\eta) &= \int_{-N}^N F(x,\eta(x))\dd x = \int_{-N}^N F\Big(G(U(x)),\frac1{\partial_\omega G(U(x))}\Big)\dd x  \\
    &= \int_0^1 \partial_\omega G(\omega) F\Big(G(\omega),\frac1{\partial_\omega G(\omega)}\Big)\dd\omega
    \label{eq:newpsi}
    = \int_0^1 H(G(\omega),\partial_\omega G(\omega))\dd\omega.
  \end{align}
  Now, let $\tilde G$ be the inverse distribution function of $\tilde\eta_N$.
  Using the representation \eqref{eq:wassbyidf} of the Wasserstein distance, we obtain
  \begin{align}
    \label{eq:wass}
    \wass\big(\eta(t),\tilde\eta\big)^2 = \int_0^1 [G(t;\omega)-\tilde G(\omega)]^2\dd\omega.
  \end{align}
  Now we can prove \eqref{eq:evi2}.
  Combining \eqref{eq:wass} with the evolution equation \eqref{eq:evol2} and the representation \eqref{eq:newpsi} of $\Psi$,
  we find
  \begin{align*}
    &\frac12\frac{\dd}{\dd t} \wass(\eta(t),\tilde{\eta}_N)^2 + \frac\kappa2 \wass(\eta(t),\tilde{\eta}_N)^2
    = \int_0^1 G_t[G-\tilde{G}]\dd\omega + \frac\kappa2 \int_0^1 [G-\tilde{G}]^2\dd\omega \\
    &= \int_0^1 \dff_\omega[H_\xi(G,G_\omega)][G-\tilde{G}]\dd\omega - \int_0^1 H_x(G,G_\omega)[G-\tilde{G}]\dd\omega + \frac\kappa2 \int_0^1 [G-\tilde{G}]^2\dd\omega \\
    &\stackrel{(\star)}{=} \int_0^1 H_\xi(G,G_\omega)[\tilde{G}_\omega-G_\omega]\dd\omega + \int_0^1 H_x(G,G_\omega)[\tilde{G}-G]\dd\omega + \frac\kappa2 \int_0^1 [\tilde{G}-G]^2\dd\omega \\
    &\stackrel{(\star\star)}{\le} \int_0^1 \big[H(\tilde{G},\tilde{G}_\omega) - H(G,G_\omega) \big]\dd\omega
    = \Psi_N(\tilde{\eta}_N)-\Psi_N(\eta(t)).
  \end{align*}
  In step $(\star\star)$ we have used the joint convexity of \eqref{eq:joint}.
  The equality $(\star)$ is established integrating by parts;
  the boundary terms vanish because of the boundary conditions \eqref{eq:idfboundary} satisfied by $G$ and $\tilde G$.
  This proves the analogue of \eqref{eq:evi2} for each $\Psi_N$ with its respective flow $\flow_N$.

  Finally, we pass to the limit $N\to\infty$.
  For definiteness of notation, we denote the solutions to \eqref{eq:evolreg} by $\eta_N(t)=\flow_N^t(\eta^0_N)$ from now on.

  To begin with, we show that a subsequence $(\eta_{N'})\subseteq(\eta_N)$ converges to a limit function $\eta_\infty$
  satisfying the unregularized evolution equation \eqref{eq:evol}.
  For the following calculations, the bounds required in \eqref{eq:elliptic2} are important.
  First, we derive an $H^1(\setR)$-estimate:
  \begin{align*}
    \frac12\frac{\dd}{\dd t}\int_{-N}^N \eta_N^2\dd x
    &= -\int_{-N}^N \eta_N F_{\eta\eta}(\eta_N,x)\left(\eta_{N}\right)_{x}^2 \dd x - \int_{-N}^N \left(\eta_N\right)_x F_{x\eta}(\eta_N,x)\eta_N \dd x \\
    &  \leq - c \int_{-N}^N (\eta_N)_x^2 \dd x -\frac{c}2\int_{-N}^N (\eta_N)_x^2 \dd x + \frac{C^2}{2c} \int_{-L}^L \eta_N^2 \dd x.
  \end{align*}
  Integration with respect to $t\in]0,T[$ yields an $N$-uniform bound on $\eta_N$ in $L^2(0,T;H^1(\R))$ in terms of $\|\eta^0\|_{L^2}$.
  By Alaoglu's theorem, this provides weak convergence of a subsequence $\eta_{N'}$ to a limit $\eta_\infty$ in $L^2(0,\infty;H^1(\setR))$.
  Second, we prove an $H^{-1}(\setR)$-estimate on $\partial_t\eta_N$.
  To this end, let $\theta_{xx}=\partial_t(\eta_N)$ with $\theta_x(t,N)=\theta_x(t,-N)=0$,
  and calculate, using again \eqref{eq:elliptic2}:
  \begin{align*}
    \int_{-N}^N \theta_x^2 \dd x
    &= -\int_{-N}^N \theta \theta_{xx}\dd x = -\int_{-N}^N \theta\partial_t(\eta_N)\dd x
    = - \int_{-N}^N \theta_x\,(\eta_N)_x\,\eta_N F_{\eta\eta} \dd x + \int_{-N}^N \theta_x\,\eta_N F_{\eta x} \dd x \\
    &  \leq \frac12\int_{-N}^N \theta_x^2 \dd x + C^2\int_{-N}^N(\eta_N)_x^2 \dd x + C^2\int_{-N}^N \eta_N^2 \dd x,
  \end{align*}
  which, in combination with the $H^1(\setR)$-bound from above,
  shows that $\partial_t\eta_N$ is $N$-uniformly bounded in $L^2(0,T;H^{-1}(\R))$.
  The Aubin-Lions lemma, see e.g. \cite{showalter}, shows
  that the convergence of $\eta_{N'}$ to $\eta_\infty$ is actually strong in $L^2_\loc(\setR_+\times \R)$.

  Strong convergence in $L^2_\loc(\setR_+\times\setR)$ and weak convergence in $L^2(0,\infty;H^1(\setR))$ allows us
  to pass to the limit in the weak formulation of \eqref{eq:evolreg},
  \begin{align*}
    -\int_0^\infty\intX \varphi_t\eta_{N'}\dd x\dd t
    = - \int_0^\infty\intX \varphi_x \big[\eta_{N'} F_{\eta\eta}(x,\eta_{N'})(\eta_{N'})_x + \eta_{N'} F_{x\eta}(x,\eta_{N'})\big]\dd x\dd t
  \end{align*}
  where $\varphi\in C^\infty_c(\setR_+\times\setR)$ is a test function.
  Here we use that the nonlinearities $\eta F_{\eta\eta}(x,\eta)$ and $\eta F_{x\eta}(x,\eta)$ grow at most linearly with respect to $\eta$ by assumption \eqref{eq:elliptic2},
  and so they converge in $L^2_\loc(\setR_+\times\setR)$, too, by dominated convergence.
  We conclude that the limit $\eta_\infty$ is a weak (and thus --- by parabolic regularity theory --- also the unique classical) solution
  to the Cauchy problem \eqref{eq:evol} with initial condition $\eta^0$.
  In other words, we have $\flow_\Psi^t(\eta_0)=\eta_\infty(t)$ for almost every $t\ge0$,
  where $\flow_\Psi$ is the solution operator to \eqref{eq:evol}.

  Passage to the limit in the EVI \eqref{eq:evi2} is more subtle
  because the convergence of the time derivatives $\partial_t\eta_{N'}$ is a priori only weak in $L^2(0,\infty;H^{-1}(\setR))$.
  In \cite{daneri}, an equivalent time-integrated version of \eqref{eq:evi2} is given,
  and the latter is stable under strong $L^2_\loc$-convergence.
  We refer the interested reader to \cite[Section 3]{daneri} for further details.
\end{proof}

Lemma \ref{lem:z} gives an indication that the functional $\poten$ from \eqref{eq:poten} is \emph{not} geodesically $\lambda$-convex.
Note that, by Theorem \ref{thm:convexity}, convexity of $\poten$ would imply the existence of a $\kappa$-flow for $\Psi$.
However, with $F(x,\eta)=a(x)\eta^m$, we have $H(x,\xi)=a(x)\xi^{1-m}$, and thus
\begin{align*}
  \dff^2 H(x,\xi) =
  \begin{pmatrix}
    a''(x)\xi^{1-m}-\kappa & -(m-1)a'(x)\xi^{-m}\\ -(m-1)a'(x)\xi^{-m} & m(m-1)a(x)\xi^{-(m+1)}
  \end{pmatrix}.
\end{align*}
Unless $a$ is a constant, there must exist an $\bar x\in\setR$ with $a''(\bar x)<0$.
Thus, no matter how $\kappa$ is chosen, there exists further a sufficiently small $\bar\xi\in\setR_+$
such that the top left element of $\dff^2 H(\bar x,\bar \xi)$ is negative, that is $a''(\bar x)\bar\xi^{-(m-1)}-\kappa<0$.
Consequently, $\dff^2 H(\bar x,\bar \xi)$ is not positive semi-definite.
Note, however, that we have only shown sufficiency of the joint convexity of \eqref{eq:joint} for the existence of a $\kappa$-flow.

\section{Construction of entropy solutions}\label{sec:time_discrete}


%
%
This section is devoted to the proof of Theorem \ref{thm:main}.
\smallskip

Throughout this section, $\rho_\tau^n$ denotes the $n$th iterate in the JKO scheme for given time step $\tau>0$
as described in Section \ref{subsec:JKO}, see also \eqref{eq:mm} below.
Further, $\bar\rho_\tau:[0,\infty[\to\dens$ denotes the piecewise constant interpolation of the sequence $(\rho_\tau^n)_{n\in\setN}$,
with \eqref{eq:interpol}.

\subsection{Well posedness of the JKO scheme}\label{subsec:wellposed}
Recall the inductive definition of the $\rho_\tau^n$,
\begin{align}
  \label{eq:mm}
  \rho_\tau^n\in\dens \quad \text{is the unique minimizer of} \quad \poten_\tau(\rho;\rho_\tau^{n-1}) =  \frac1{2\tau}\wass(\rho,\rho_\tau^{n-1})^2 + \poten(\rho).
\end{align}
\begin{lem}\label{lem:scheme_well_posed}
  The time-discrete scheme is well-defined in the sense that for any given $\rho_\tau^{n-1}\in\dens$,
  the functional $\poten_\tau(\cdot;\rho_\tau^{n-1})$ indeed admits a unique minimizer $\rho_\tau^n\in\dens$.
\end{lem}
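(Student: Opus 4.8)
The plan is to apply the direct method of the calculus of variations. Fix $\sigma := \rho_\tau^{n-1} \in \dens$ and consider a minimizing sequence $(\rho_k)_{k\in\setN} \subset \dens$ for the functional $\poten_\tau(\cdot;\sigma)$. First I would check that the infimum is finite and the functional is bounded below: since $a \ge \mina > 0$, we have $\poten[\rho] = \frac1m\intX a\rho^m \ge 0$, and $\poten_\tau(\sigma;\sigma) = \poten(\sigma) < +\infty$ by the standing assumption on the initial datum (and since $\poten$ is translation-covariant along the scheme one checks inductively that each iterate inherits finite energy), so the infimum is a finite nonnegative number. From the bound $\frac{1}{2\tau}\wass(\rho_k,\sigma)^2 \le \poten_\tau(\rho_k;\sigma) \le C$ one gets that $(\rho_k)$ is $\wass$-bounded, hence (since $\sigma$ has finite second moment) the sequence has uniformly bounded second moments; by Prokhorov this gives narrow precompactness, and a subsequence converges narrowly to some $\rho_*$, with $\rho_* \in \dens$ because lower semicontinuity of the second moment under narrow convergence prevents escape of mass and keeps the second moment finite.

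Next I would establish lower semicontinuity of $\poten_\tau(\cdot;\sigma)$ along this subsequence. The Wasserstein term is lower semicontinuous with respect to narrow convergence, which is standard. For the internal energy term $\poten[\rho] = \frac1m\intX a\rho^m\dd x$, I would invoke the general lower semicontinuity result for integral functionals of the form $\int f(x,\rho(x))\dd x$ with $f(x,\cdot) = \frac1m a(x)(\cdot)^m$ convex and superlinear (here $m>1$): such functionals are lower semicontinuous with respect to narrow convergence of measures (see e.g. the relevant results in \cite{AGS}). Crucially one should also verify that the narrow limit $\rho_*$ is absolutely continuous, i.e. genuinely a density in $\dens$ and not merely a measure; this again follows because finiteness of the liminf of $\poten[\rho_k]$ together with superlinearity of $t\mapsto t^m$ forces the limit measure to be absolutely continuous with $L^m$ density (otherwise the functional, suitably relaxed to measures, would be $+\infty$ on the singular part). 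Combining the two lower semicontinuity statements, $\poten_\tau(\rho_*;\sigma) \le \liminf_k \poten_\tau(\rho_k;\sigma) = \inf$, so $\rho_*$ is a minimizer.

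For uniqueness I would use strict convexity. Along the generalized geodesic based at $\sigma$ connecting two competitors $\rho^0$ and $\rho^1$, the map $\rho \mapsto \frac{1}{2\tau}\wass(\rho,\sigma)^2$ is $\frac1\tau$-convex (in fact $2$-convex in the sense of displacement interpolation from the base point), hence strictly convex; meanwhile $\poten$ is convex along generalized geodesics because $a \ge 0$ and $t\mapsto t^m$ is convex with the usual McCann condition ($s \mapsto s^m$ with $m\ge1$, equivalently $s\mapsto s^d f(s^{-d})$ nonincreasing convex) — at worst $\poten$ is merely convex, but that suffices since adding the strictly convex Wasserstein term makes $\poten_\tau(\cdot;\sigma)$ strictly convex along generalized geodesics. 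Two distinct minimizers would contradict this, so the minimizer is unique. The main obstacle, and the step deserving the most care, is the joint handling of the limit: ensuring simultaneously that the narrow limit stays in $\dens$ (absolute continuity, finite second moment) and that $\poten$ does not drop in the limit — i.e. correctly citing or reproving the lower semicontinuity of the porous-medium-type internal energy under narrow convergence without assuming a priori $L^m$ bounds beyond what the energy itself provides; everything else is a routine application of the direct method as in \cite{JKO,AGS}.
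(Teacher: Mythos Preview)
Your existence argument via the direct method is essentially the paper's proof: tightness from the Wasserstein bound, narrow lower semicontinuity of both terms, and absolute continuity of the limit from finiteness of $\poten$.

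The uniqueness argument, however, has a real gap. You invoke McCann's condition to assert that $\poten$ is convex along generalized geodesics, but McCann's criterion applies to spatially \emph{homogeneous} internal energies $\int U(\rho)\,dx$; it does not cover the weighted functional $\poten(\rho)=\frac1m\int a(x)\rho^m\,dx$ with nonconstant $a$. In fact the paper explicitly argues (end of Section~\ref{sec:gradient_flow}) that $\poten$ is \emph{not} geodesically $\lambda$-convex for any $\lambda$ once $a$ is nonconstant: the associated function $H(x,\xi)=a(x)\xi^{1-m}$ fails to have positive semidefinite Hessian because $a''(\bar x)<0$ at some point. So you cannot get even $0$-convexity of $\poten$ along (generalized) geodesics, and your strict-convexity-of-the-sum argument collapses.

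The paper sidesteps this by using \emph{linear} interpolation of measures rather than displacement interpolation: for $\rho_s=(1-s)\rho_0+s\rho_1$, the map $s\mapsto\poten(\rho_s)$ is \emph{strictly} convex simply because $t\mapsto t^m$ is strictly convex and $a>0$, while $s\mapsto\wass(\rho_s,\sigma)^2$ is convex (a standard fact). The sum is therefore strictly convex along linear interpolants, which gives uniqueness immediately. This is both simpler and avoids the displacement-convexity obstruction that is, after all, the whole point of the paper.
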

\begin{proof}
  We argue that the direct methods from the calculus of variations apply to the minimization problem for $\poten_\tau(\cdot;\rho_\tau^{n-1})$ on $\dens$.

  First, observe that $\rho_\tau^{n-1}\in\dens$ implies that the $\poten(\rho_\tau^{n-1})$-sublevel of this functional is non-empty and weakly-$\star$ relatively compact.
  Indeed, $\rho_\tau^{n-1}$ itself belongs to that sublevel, and if $\rho^*\in\overline{\dens}$ is any other element of this sublevel,
  then $\wass(\rho^*,\rho_\tau^{n-1})^2\le\poten(\rho_\tau^{n-1})$, since $\poten$ is a non-negative functional.
  This bound on the Wasserstein distance provides a control on the second moment of $\rho^*$ in terms of $\rho_\tau^{n-1}$ alone,
  and so the sublevel is a tight collection of measures, and thus relatively compact in the weak-$\star$ topology.
  Further, both $\poten$ and the distance $\wass(\cdot,\rho_\tau^{n-1})$ are lower semi-continuous with respect to narrow convergence.
  This, together with the fact that $\poten_\tau(\cdot;\rho_\tau^{n-1})$ is bounded from below, guarantees the existence of at least one minimizer $\rho_\tau^n\in\overline{\dens}$.
  Finally, since necessarily $\poten(\rho_\tau^n)$ is finite, $\rho_\tau^n$ belongs to the subspace of densities $\dens$.

  To prove uniqueness of the minimizer, it suffices to observe
  that the squared distance $\wass(\cdot,\rho_\tau^{n-1})^2$ is convex,
  and that the potential $\poten$ is \emph{strictly} convex (recall that $m>1$) in the sense of linear interpolation of measures.
  Thus, there can be at most one minimizer.
\end{proof}

\subsection{Basic energy estimates and flow interchange}\label{subsec:basic}
Immediately from the construction in \eqref{eq:mm}, we obtain the canonical energy estimate
\begin{align*}
  \frac1{2\tau} \sum_{n=1}^N\wass(\rho_\tau^n,\rho_\tau^{n-1})^2
  \le \mathcal{F}(\rho_\tau^0) - \mathcal{F}(\rho_\tau^N) \quad \text{for all $N\in\setN$}.
\end{align*}
which induces the following uniform bound on $\rho_\tau^n$ in $L^m(\setR)$:
\begin{align}
  \label{eq:Lmbound}
  \frac{\mina}{m} \big\|\rho_\tau^n\big\|_{L^m}^m
  \le \mathcal{F}(\rho_\tau^n) \le \mathcal{F}(\rho^0).
\end{align}
Since $\mathcal{F}$ is non-negative, it follows further that
\begin{align}
  \label{eq:ee}
  \sum_{n=1}^\infty\wass(\rho_\tau^n,\rho_\tau^{n-1})^2
  \le 2\tau \mathcal{F}(\rho_\tau^0).
\end{align}
For our purposes, we derive stronger a priori estimates by variations of the minimizers $\rho_\tau^n$ along specific $\kappa$-flows,
following the general strategy from \cite[Section 3]{matthes_mccann_savare}.
\begin{lem}
  \label{lem:fi}
  Let $\Psi:\dens\to]-\infty,+\infty]$ be a lower semi-continuous functional on $\dens$ which possesses a $\kappa$-flow $\flow_\Psi$.
  Define further the dissipation of $\poten$ along $\flow_\Psi$ by
  \begin{align*}
    \dissip_\Psi(\rho)
    := \limsup_{s\downarrow 0}\frac1s \big[ \poten(\rho) - \poten\big(\flow_\Psi^s\rho\big) \big]
  \end{align*}
  for every $\rho\in\dens$.
  If $\rho_\tau^{n-1}$ and $\rho_\tau^n$ are two consecutive steps of the minimizing movement scheme \eqref{eq:mm},
  then
  \begin{align}
    \label{eq:fi-original}
    \Psi(\rho_\tau^{n-1})-\Psi(\rho_\tau^n) \ge \tau\dissip_\Psi(\rho_\tau^n) +  \frac\kappa2\wass(\rho_\tau^n,\rho_\tau^{n-1})^2.
  \end{align}
  In particular, $\Psi(\rho_\tau^{n-1})<\infty$ implies $\dissip_\Psi(\rho_\tau^n)<\infty$.
\end{lem}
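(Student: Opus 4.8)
The plan is to exploit the variational characterization of $\rho_\tau^n$ as minimizer of $\poten_\tau(\cdot;\rho_\tau^{n-1})$ by comparing it with the competitor $\flow_\Psi^s\rho_\tau^n$ obtained by flowing $\rho_\tau^n$ for a short time $s$ along the $\kappa$-flow of $\Psi$. Since $\rho_\tau^n$ is the minimizer, we have the inequality
\begin{align}
  \label{eq:fi-compete}
  \frac1{2\tau}\wass(\rho_\tau^n,\rho_\tau^{n-1})^2 + \poten(\rho_\tau^n)
  \le \frac1{2\tau}\wass(\flow_\Psi^s\rho_\tau^n,\rho_\tau^{n-1})^2 + \poten(\flow_\Psi^s\rho_\tau^n)
\end{align}
for every $s\ge0$. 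Rearranging,
\begin{align*}
  \poten(\rho_\tau^n) - \poten(\flow_\Psi^s\rho_\tau^n)
  \le \frac1{2\tau}\Big( \wass(\flow_\Psi^s\rho_\tau^n,\rho_\tau^{n-1})^2 - \wass(\rho_\tau^n,\rho_\tau^{n-1})^2 \Big).
\end{align*}
Dividing by $s>0$ and taking $\limsup_{s\downarrow0}$ turns the left-hand side into $\dissip_\Psi(\rho_\tau^n)$ by definition, so the whole game reduces to estimating the right-hand side from above by $-\tfrac1{2\tau}\kappa\wass(\rho_\tau^n,\rho_\tau^{n-1})^2$ in the limit; multiplying through by $\tau$ then gives \eqref{eq:fi-original}.

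To control the difference quotient of $\sigma\mapsto\wass(\flow_\Psi^\sigma\rho_\tau^n,\rho_\tau^{n-1})^2$ at $\sigma=0^+$, I would invoke the EVI \eqref{eq:evi} that defines a $\kappa$-flow, with the choice $\rho=\rho_\tau^n$ and comparison measure $\tilde\rho=\rho_\tau^{n-1}$. This requires $\Psi(\rho_\tau^{n-1})<\infty$, which is exactly the hypothesis of the "in particular" clause; under this assumption the EVI gives, for all $\sigma>0$,
\begin{align*}
  \frac12\frac{\dd^+}{\dd\sigma} \wass(\flow_\Psi^\sigma\rho_\tau^n,\rho_\tau^{n-1})^2
  + \frac\kappa2 \wass(\flow_\Psi^\sigma\rho_\tau^n,\rho_\tau^{n-1})^2
  \le \Psi(\rho_\tau^{n-1}) - \Psi(\flow_\Psi^\sigma\rho_\tau^n).
\end{align*}
Integrating this differential inequality over $\sigma\in[0,s]$ (using absolute continuity of $\sigma\mapsto\flow_\Psi^\sigma\rho_\tau^n$ in $\wass$ together with lower semicontinuity of $\Psi$, so that $\Psi(\flow_\Psi^\sigma\rho_\tau^n)\ge$ some bound and the map $\sigma\mapsto\Psi(\flow_\Psi^\sigma\rho_\tau^n)$ is nonincreasing along the flow) yields
\begin{align*}
  \frac1{2s}\Big( \wass(\flow_\Psi^s\rho_\tau^n,\rho_\tau^{n-1})^2 - \wass(\rho_\tau^n,\rho_\tau^{n-1})^2 \Big)
  \le \frac1s\int_0^s \Big( \Psi(\rho_\tau^{n-1}) - \Psi(\flow_\Psi^\sigma\rho_\tau^n) - \frac\kappa2\wass(\flow_\Psi^\sigma\rho_\tau^n,\rho_\tau^{n-1})^2 \Big)\dd\sigma.
\end{align*}
Letting $s\downarrow0$, the right-hand side converges to $\Psi(\rho_\tau^{n-1}) - \Psi(\rho_\tau^n) - \tfrac\kappa2\wass(\rho_\tau^n,\rho_\tau^{n-1})^2$ by continuity of $\sigma\mapsto\flow_\Psi^\sigma\rho_\tau^n$ at $\sigma=0$ in $\wass$ and again lower/upper semicontinuity of $\Psi$ along the flow. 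Combining with the competitor inequality above gives
\begin{align*}
  \tau\dissip_\Psi(\rho_\tau^n) \le \Psi(\rho_\tau^{n-1}) - \Psi(\rho_\tau^n) - \frac\kappa2\wass(\rho_\tau^n,\rho_\tau^{n-1})^2,
\end{align*}
which is exactly \eqref{eq:fi-original} after rearranging, and the finiteness of $\dissip_\Psi(\rho_\tau^n)$ follows since the right-hand side is finite whenever $\Psi(\rho_\tau^{n-1})<\infty$ (note $\Psi(\rho_\tau^n)$ is bounded below along the flow, and in fact $\Psi(\rho_\tau^n)\le\Psi(\rho_\tau^{n-1})$ modulo the $\kappa$-term).

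The main technical obstacle I anticipate is making rigorous the passage to the limit $s\downarrow0$ in the integrated EVI when $\kappa<0$ and when $\Psi$ is merely lower semicontinuous rather than continuous: one must argue that $\sigma\mapsto\Psi(\flow_\Psi^\sigma\rho_\tau^n)$ is right-continuous at $\sigma=0$ (so that the Cesàro average of $\Psi(\flow_\Psi^\sigma\rho_\tau^n)$ over $[0,s]$ tends to $\Psi(\rho_\tau^n)$), which is a standard but slightly delicate consequence of the EVI — the EVI forces $\Psi$ to be nonincreasing along the flow and, combined with lower semicontinuity and the $\wass$-continuity of the curve, forces this right-continuity. A clean alternative that sidesteps the delicate limit is to not differentiate at all: directly from \eqref{eq:fi-compete}, using the EVI in its integrated form to bound $\wass(\flow_\Psi^s\rho_\tau^n,\rho_\tau^{n-1})^2 - \wass(\rho_\tau^n,\rho_\tau^{n-1})^2 \le 2s\big(\Psi(\rho_\tau^{n-1})-\Psi(\flow_\Psi^s\rho_\tau^n)\big) - \kappa\int_0^s\wass(\flow_\Psi^\sigma\rho_\tau^n,\rho_\tau^{n-1})^2\dd\sigma$, divide by $2\tau s$, use $\poten(\rho_\tau^n)-\poten(\flow_\Psi^s\rho_\tau^n) \ge $ the left side, and only then take $\limsup_{s\downarrow0}$ — at which point the $\poten$-difference quotient produces $\dissip_\Psi(\rho_\tau^n)$ on the left and the continuity of the $\wass$-curve at $0$ handles the $\kappa$-integral term on the right without needing continuity of $\Psi$. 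This is essentially the argument in \cite[Section 3]{matthes_mccann_savare}, and I would follow it.
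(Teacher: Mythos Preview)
Your proposal is correct and follows essentially the same approach as the paper: combine the minimizer property \eqref{eq:fi-compete} with the EVI \eqref{eq:evi} (at $\rho=\rho_\tau^n$, $\tilde\rho=\rho_\tau^{n-1}$), use absolute $\wass$-continuity of the flow curve to relate the upper derivative of $\wass^2$ to its difference quotient at zero, and invoke lower semi-continuity of $\Psi$ to handle $\Psi(\flow_\Psi^s\rho_\tau^n)$ as $s\downarrow0$. The paper organizes the estimate starting from the EVI side and ending at the competitor inequality (your order is reversed), and it bypasses the right-continuity issue you flag simply via the one-sided bound $\Psi(\rho_\tau^{n-1})-\Psi(\rho_\tau^n)\ge\limsup_{s\downarrow0}\big(\Psi(\rho_\tau^{n-1})-\Psi(\flow_\Psi^s\rho_\tau^n)\big)$ from lower semi-continuity --- which is exactly your ``clean alternative''.
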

%
%
\begin{proof}
  Since \eqref{eq:fi-original} is trivial for $\Psi(\rho_\tau^{n-1})=+\infty$,
  there is no loss in generality to assume $\Psi(\rho_\tau^{n-1})<\infty$.
  We can thus use the EVI inequality \eqref{eq:evi} with $\rho:=\rho_\tau^n$ for the comparison measure $\tilde\rho:=\rho_\tau^{n-1}$.
  By lower semi-continuity of $\Psi$, we conclude
  \begin{align}
    \Psi(\rho_\tau^{n-1}) - \Psi(\rho_\tau^n)
    & \ge \limsup_{s\downarrow0}\big( \Psi(\rho_\tau^{n-1}) - \Psi(\flow_\Psi^s\rho_\tau^n)\big) \\
    \label{eq:fi-proof1}
    & \ge \frac12 \limsup_{s\downarrow0} \Big( \frac{\dd^+}{\dd\sigma}\Big|_{\sigma=s} \wass\big( \flow_\Psi^\sigma\rho_\tau^n,\rho_\tau^{n-1} \big)^2 \Big)
    + \frac\kappa2\wass(\rho_\tau^n,\rho_\tau^{n-1})^2.
  \end{align}
  Notice that we have used the (absolute) $\wass$-continuity of the curve $s\mapsto\flow_\Psi^s\rho_\tau^n$ at $s=0$ in the first step.
  The absolute continuity implies further that
  \begin{align}
    \label{eq:fi-proof2}
    \limsup_{s\downarrow0} \Big( \frac{\dd^+}{\dd\sigma}\Big|_{\sigma=s} \wass\big( \flow_\Psi^\sigma\rho_\tau^n,\rho_\tau^{n-1} \big)^2 \Big)
    \ge \limsup_{s\downarrow0} \frac1s \big( \wass(\flow_\Psi^s\rho_\tau^n,\rho_\tau^{n-1})^2 - \wass(\rho_\tau^n,\rho_\tau^{n-1})^2\big).
  \end{align}
  By definition of $\rho_\tau^n$ as a minimizer of $\poten_\tau$ from \eqref{eq:mm},
  we have
  \begin{align*}
    \poten_\tau(\flow_\Psi^s\rho_\tau^n;\rho_\tau^{n-1}) \ge \poten_\tau(\rho_\tau^n;\rho_\tau^{n-1})
  \end{align*}
  for every $s\ge0$,
  and thus
  \begin{align}
    \label{eq:fi-proof3}
    \wass(\flow_\Psi^s\rho_\tau^n,\rho_\tau^{n-1})^2 - \wass(\rho_\tau^n,\rho_\tau^{n-1})^2
    \ge 2\tau \big[ \poten(\rho_\tau^n) - \poten(\flow_\Psi^s\rho_\tau^n)\big].
  \end{align}
  Inserting \eqref{eq:fi-proof3} into \eqref{eq:fi-proof2}, and then \eqref{eq:fi-proof2} into \eqref{eq:fi-proof1} yields \eqref{eq:fi-original}.
\end{proof}
\begin{cor}
  \label{cor:fi-cond}
  Under the hypotheses of Lemma \ref{lem:fi}, let the $\kappa$-flow $\flow_\Psi$ be such that for every $n\in\setN$, 
  the curve $s\mapsto\flow_\Psi^s\rho_\tau^n$ lies in $L^m(\setR)$, where it is differentiable for $s>0$ and continuous at $s=0$.
  And moreover, let a functional $\Kbound:\dens\to]-\infty,\infty]$ satisfy
  \begin{align}
    \label{eq:fi-cond}
    \liminf_{s\downarrow0}\Big(-\frac{\dd}{\dd\sigma}\Big|_{\sigma=s}\poten(\flow_\Psi^\sigma\rho_\tau^n)\Big)
    \ge\Kbound(\rho_\tau^n).
  \end{align}
  Then the following two estimates hold.
  \begin{align}
    \label{eq:fi-1}
    \text{For every $n\in\setN$:}&\quad
    \Psi(\rho_\tau^{n-1})-\Psi(\rho_\tau^n)
    \ge \tau\Kbound(\rho_\tau^n) + \frac\kappa2\wass(\rho_\tau^n,\rho_\tau^{n-1})^2; \\
    \label{eq:fi-2}
    \text{for every $N\in\setN$:}&\quad
    \Psi(\rho_\tau^N) \le \Psi(\rho^0) -\tau\sum_{n=1}^N \Kbound(\rho_\tau^n) + \tau\max(0,-\kappa)\poten(\rho^0).
  \end{align}
\end{cor}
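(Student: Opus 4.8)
The plan is to convert the hypothesis \eqref{eq:fi-cond}, which controls the instantaneous rate of decay of $\poten$ along $\flow_\Psi$, into the dissipation bound $\dissip_\Psi(\rho_\tau^n)\ge\Kbound(\rho_\tau^n)$ entering Lemma~\ref{lem:fi}, and then to combine that lemma with the elementary energy estimate \eqref{eq:ee}.

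\emph{Step 1 (the dissipation bound).} Fix $n$ and set $g(\sigma):=\poten(\flow_\Psi^\sigma\rho_\tau^n)$; we may assume $\Psi(\rho_\tau^{n-1})<+\infty$, since otherwise \eqref{eq:fi-1} and \eqref{eq:fi-2} are vacuous. By assumption the curve $\sigma\mapsto\flow_\Psi^\sigma\rho_\tau^n$ takes values in $L^m(\setR)$, is continuous at $\sigma=0$ and differentiable for $\sigma>0$; since $\poten(\rho)=\frac1m\int_\setR a\rho^m\dd x$ is continuous (in fact locally Lipschitz) on $L^m(\setR)$ and differentiable along that curve, $g$ is continuous on $[0,\infty)$ and differentiable on $(0,\infty)$. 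The mean value theorem on $[0,s]$ then furnishes, for each $s>0$, a point $\sigma_s\in(0,s)$ with $\frac{g(0)-g(s)}{s}=-g'(\sigma_s)$; letting $s\downarrow0$, so that $\sigma_s\downarrow0$, gives
\begin{align*}
  \dissip_\Psi(\rho_\tau^n)
  &= \limsup_{s\downarrow0}\frac{g(0)-g(s)}{s}
  = \limsup_{s\downarrow0}\bigl(-g'(\sigma_s)\bigr) \\
  &\ge \liminf_{\sigma\downarrow0}\bigl(-g'(\sigma)\bigr)
  \ge \Kbound(\rho_\tau^n),
\end{align*}
the last inequality being precisely \eqref{eq:fi-cond}.

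\emph{Step 2 (the two estimates).} Inserting $\dissip_\Psi(\rho_\tau^n)\ge\Kbound(\rho_\tau^n)$ into the flow-interchange inequality \eqref{eq:fi-original} yields \eqref{eq:fi-1} at once. Summing \eqref{eq:fi-1} over $n=1,\dots,N$ and telescoping the left-hand side to $\Psi(\rho^0)-\Psi(\rho_\tau^N)$ gives
\begin{align*}
  \Psi(\rho_\tau^N)\le\Psi(\rho^0)-\tau\sum_{n=1}^N\Kbound(\rho_\tau^n)-\frac\kappa2\sum_{n=1}^N\wass(\rho_\tau^n,\rho_\tau^{n-1})^2.
\end{align*}
If $\kappa\ge0$ the final sum enters with a non-positive sign and $\max(0,-\kappa)=0$; if $\kappa<0$ one estimates $-\frac\kappa2\sum_{n=1}^N\wass(\rho_\tau^n,\rho_\tau^{n-1})^2\le\frac{|\kappa|}{2}\sum_{n=1}^\infty\wass(\rho_\tau^n,\rho_\tau^{n-1})^2\le|\kappa|\,\tau\,\poten(\rho^0)$ using \eqref{eq:ee} (recall $\rho_\tau^0=\rho^0$ and $\poten\ge0$). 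In either case the last term is bounded by $\tau\max(0,-\kappa)\poten(\rho^0)$, which is \eqref{eq:fi-2}.

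\emph{Expected obstacle.} Nothing here is deep; the only point that needs a little care is in Step~1, namely the continuity of $g$ up to $\sigma=0$ and the elementary passage from the $\limsup$ of the difference quotients $\frac{g(0)-g(s)}{s}$ to the $\liminf$ of $-g'$ at $0^+$. Both are secured by the $L^m$-regularity of the flow assumed in the statement together with the continuity of $\poten$ for $L^m$-convergence; the rest is pure bookkeeping with Lemma~\ref{lem:fi} and the energy estimate \eqref{eq:ee}.
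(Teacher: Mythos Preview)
Your proof is correct and follows the same two-part structure as the paper: first show $\dissip_\Psi(\rho_\tau^n)\ge\Kbound(\rho_\tau^n)$, then feed this into Lemma~\ref{lem:fi} and telescope using the energy estimate~\eqref{eq:ee}. The only difference is in Step~1, where you invoke the mean value theorem on $g(\sigma)=\poten(\flow_\Psi^\sigma\rho_\tau^n)$ to produce an intermediate point $\sigma_s$, whereas the paper writes the difference quotient as an average $\frac{g(0)-g(s)}{s}=\int_0^1(-g'(sz))\dd z$ via the fundamental theorem of calculus and then applies Fatou's lemma to pass the $\liminf$ inside. Both routes are elementary and rest on the same regularity of $g$ (continuity at $0$, differentiability for $\sigma>0$); your MVT argument is marginally shorter and avoids checking the lower bound needed for Fatou, while the paper's version makes the passage from $\limsup$ of averages to $\liminf$ of pointwise values slightly more transparent.
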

\begin{proof}
  First, we estimate $\dissip_\Psi$ from below by $\Kbound$.
  The $L^m(\setR)$-regularity assumptions on $\flow^\Psi$ imply that $s\mapsto\poten(\flow^\Psi_s\rho_\tau^n)$
  is differentiable for $s>0$ and continuous at $s=0$.
  Thus, by the fundamental theorem of calculus,
  \begin{align*}
    \dissip^\Psi(\rho_\tau^n)
    = \limsup_{\bar s\downarrow0}\frac1{\bar s}\big[\poten(\rho_\tau^n) - \poten(\flow^\Psi_{\bar s}\rho_\tau^n)\big]
    &= \limsup_{\bar s\downarrow0}\int_0^1 \Big(-\frac{\dd}{\dd\sigma}\Big|_{\sigma=\bar sz}\poten(\flow^\Psi_\sigma\rho_\tau^n)\Big)\dd z \\
    &\ge \int_0^1 \liminf_{\bar s\downarrow0}\Big(-\frac{\dd}{\dd\sigma}\Big|_{\sigma=\bar sz}\poten(\flow^\Psi_\sigma\rho_\tau^n)\Big)\dd z
    \ge \Kbound(\rho_\tau^n),
  \end{align*}
  by Fatou's lemma and \eqref{eq:fi-cond}.
  Now estimate \eqref{eq:fi-1} follows directly from \eqref{eq:fi-original}.
  Estimate \eqref{eq:fi-2} is obtained by arranging \eqref{eq:fi-1} in a telescopic sum and combining it with the energy estimate \eqref{eq:ee}.
\end{proof}



\subsection{Refined a priori estimates}
The main ingredient in order to get suitable compactness is the following a priori estimate.
\begin{lem}
  \label{lem:bound1}
  There is a constant $A$ depending only on $\rho^0$ (and in particular not on $\tau$)
  such that the piecewise constant interpolants $\bar\rho_\tau$ satisfy
  \begin{align}
    \label{eq:bound1}
    \big\|\bar\rho_\tau^{m/2}\big\|_{L^2(0,T;H^1(\setR))} \le A(1+T) \quad \text{for all $T>0$}.
  \end{align}
  In particular, $\bar\rho_\tau(t)^{m/2}\in H^1(\setR)$ for every $t>0$.
\end{lem}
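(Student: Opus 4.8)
The plan is to apply the flow interchange machinery of Corollary \ref{cor:fi-cond} with an auxiliary $\kappa$-flow whose dissipation of $\poten$ controls $\|\bar\rho_\tau^{m/2}\|_{H^1}$. The natural choice is the \emph{heat semigroup} $\flow_\Psi = \flow_{\ent}$ generated by the Boltzmann entropy $\ent(\rho) = \int_\setR \rho\log\rho\dd x$, which is the classical $0$-flow in the Wasserstein space (indeed $\ent$ is geodesically convex, so $\kappa=0$). Along the heat flow $\partial_s\eta = \eta_{xx}$ one computes the dissipation of $\poten[\eta] = \frac1m\int a\eta^m\dd x$:
\begin{align*}
  -\frac{\dd}{\dd s}\poten(\flow_{\ent}^s\rho)
  = -\intX a(x)\rho^{m-1}\,\rho_{xx}\dd x
  = \intX \big(a\rho^{m-1}\big)_x\,\rho_x\dd x,
\end{align*}
and expanding and using $\rho_x = \frac2m\rho^{1-m/2}(\rho^{m/2})_x$ one finds that the leading term is a positive multiple of $\int a\,\rho^{m-2}\rho_x^2 = c_m\int a\,\big((\rho^{m/2})_x\big)^2$, while the remaining term involves $a'\rho^{m-1}\rho_x$, which after integration by parts produces $\int a''\rho^m$ plus another copy of the good term with a small coefficient that can be absorbed. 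Since $a\ge\mina>0$ and $a''\in L^\infty$, this yields a bound of the shape
\begin{align}
  \label{eq:plan-diss}
  \Kbound(\rho) := \mathfrak{c}\,\big\|\rho^{m/2}\big\|_{\dot H^1(\setR)}^2 - \mathfrak{C}\,\|\rho\|_{L^m}^m
  \le \liminf_{s\downarrow0}\Big(-\frac{\dd}{\dd\sigma}\Big|_{\sigma=s}\poten(\flow_{\ent}^\sigma\rho)\Big)
\end{align}
for suitable positive constants $\mathfrak{c},\mathfrak{C}$ depending only on $\mina$, $\|a''\|_{L^\infty}$ and $m$.

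Next I would feed \eqref{eq:plan-diss} into estimate \eqref{eq:fi-2} of Corollary \ref{cor:fi-cond}, taking $\Psi = \ent$ and $\kappa=0$. Since $\ent$ is bounded below on $\dens$ uniformly in terms of the second moment (which in turn is controlled via \eqref{eq:ee} and the initial data), and $\ent(\rho^0)<\infty$ because $\rho^0\in L^\infty$ has finite second moment, estimate \eqref{eq:fi-2} gives
\begin{align*}
  \tau\sum_{n=1}^N \Kbound(\rho_\tau^n)
  \le \ent(\rho^0) - \ent(\rho_\tau^N)
  \le \ent(\rho^0) + C_0,
\end{align*}
with $C_0$ depending only on $\rho^0$. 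Plugging in the definition of $\Kbound$ and using the $L^m$-bound \eqref{eq:Lmbound} (which gives $\tau\sum_{n=1}^N\|\rho_\tau^n\|_{L^m}^m \le N\tau\,\tfrac{m}{\mina}\poten(\rho^0) \le \tfrac{m}{\mina}\poten(\rho^0)\,(T+\tau)$ for $N\tau\le T$) yields
\begin{align*}
  \mathfrak{c}\,\tau\sum_{n=1}^N \big\|(\rho_\tau^n)^{m/2}\big\|_{\dot H^1}^2
  \le \ent(\rho^0) + C_0 + \mathfrak{C}\,\tfrac{m}{\mina}\poten(\rho^0)\,(T+\tau).
\end{align*}
Recognizing the left side as $\mathfrak{c}$ times $\int_0^{N\tau}\|\bar\rho_\tau(t)^{m/2}\|_{\dot H^1}^2\dd t$ (for $N = \lceil T/\tau\rceil$), and combining with the already-available $L^2$-in-space-time bound on $\bar\rho_\tau^{m/2}$ coming from $\|\bar\rho_\tau\|_{L^m}^m \le \tfrac{m}{\mina}\poten(\rho^0)$ integrated over $[0,T]$, gives the full $L^2(0,T;H^1(\setR))$-bound \eqref{eq:bound1} with $A$ depending only on $\rho^0$; the factor $(1+T)$ absorbs the linear-in-$T$ terms. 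The pointwise claim $\bar\rho_\tau(t)^{m/2}\in H^1(\setR)$ for every $t>0$ is then immediate since $\bar\rho_\tau$ is piecewise constant, so on each step $\|\bar\rho_\tau(t)^{m/2}\|_{\dot H^1}^2 = \|(\rho_\tau^n)^{m/2}\|_{\dot H^1}^2 \le \tfrac1{\mathfrak{c}\tau}(\ent(\rho^0)+C_0+\dots)<\infty$ by \eqref{eq:fi-1}.

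The main obstacle I anticipate is \emph{rigorously justifying the differentiation of $\poten$ along the heat flow} so that \eqref{eq:plan-diss} holds with a genuine $\liminf$ of difference quotients, as demanded by \eqref{eq:fi-cond} — in particular the regularity assumptions in Corollary \ref{cor:fi-cond} that $s\mapsto\flow_{\ent}^s\rho_\tau^n$ lie in $L^m(\setR)$ and be differentiable there for $s>0$, continuous at $s=0$. For the heat semigroup this is standard (instantaneous smoothing, with $L^1\cap L^\infty$ data staying in $L^1\cap L^\infty$ with all the bounds one needs), but one must be careful that $\rho_\tau^n\in\dens\cap L^\infty$ — the $L^\infty$-bound propagating from $\rho^0$ through the JKO scheme may need a separate short argument (e.g. via the comparison principle for \eqref{eq:x-equation} at the discrete level, or simply by noting the minimizer inherits an $L^\infty$-bound), after which the integrations by parts above are legitimate and the $\liminf$ in \eqref{eq:fi-cond} can be replaced by an honest limit. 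A secondary technical point is the absorption of the cross term $\int a'\rho^{m-1}\rho_x\dd x$: one writes it as $\frac1m\int a'(\rho^m)_x = -\frac1m\int a''\rho^m$, so in fact \emph{no} absorption into the good term is needed and the bound is clean, with $\mathfrak{C} = \|a''\|_{L^\infty}/m$; this makes the estimate slightly simpler than sketched above.
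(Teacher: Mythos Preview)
Your proposal is correct and follows essentially the same route as the paper: flow interchange with the Boltzmann entropy $\ent$ (whose $0$-flow is the heat semigroup), the dissipation computation yielding exactly the functional $\Kbound_\ent$ in \eqref{eq:coercive}, and the lower bound on $\ent$ via the second moment. Two minor remarks: the $L^\infty$-bound on $\rho_\tau^n$ you worry about is not needed---$L^m$ suffices, and that follows from \eqref{eq:Lmbound}---and your constant $C_0$ actually grows like $(1+T)$ since the second moment does (Lemmas \ref{lem:bound2nd} and \ref{lem:boundent}), but as you say this is absorbed into the final $(1+T)$ factor.
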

For the proof of Lemma \ref{lem:bound1}, we need a $\tau$-uniform control on the second moment of $\bar\rho_\tau$,
which is established in a further lemma.
\begin{lem}
  \label{lem:bound2nd}
  There is a constant $B$ depending only on $\rho^0$ such that
  \begin{align}
    \label{eq:bound2nd}
    \int_\setR |x|^2\bar\rho_\tau(T,x)\dd x \le B(1+T) \quad \text{for all $T>0$}.
  \end{align}
\end{lem}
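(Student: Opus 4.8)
The plan is to control the growth of the second moment $M_2(\rho):=\int_\setR |x|^2\rho(x)\dd x$ along the JKO scheme by testing the one-step variational inequality against the quadratic potential, i.e.\ by applying the flow interchange machinery of Corollary \ref{cor:fi-cond} with the choice $\Psi(\rho)=\frac12 M_2(\rho)$. The associated $\kappa$-flow is the heat flow (more precisely the solution operator of $\partial_t\eta=\eta_{xx}$, which is the Wasserstein gradient flow of $\frac12\int |x|^2\eta\dd x$ only up to the usual identification — here one should instead take $\Psi=\frac12 M_2$ whose Wasserstein gradient flow is the transport $\partial_t\eta=-\dive(\eta\,(-x))=\dive(x\eta)$, i.e.\ a linear contraction toward the origin). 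This flow is an explicit, globally defined $\kappa$-flow with $\kappa=1>0$: along $\flow_\Psi^s[\rho]$ the measure is pushed forward by $x\mapsto e^{-s}x$, so $M_2$ decays like $e^{-2s}M_2$, and $\Psi$ is $1$-convex along geodesics. All the $L^m$-regularity hypotheses of Corollary \ref{cor:fi-cond} are trivially satisfied because this flow is just a deterministic rescaling of the density.

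The core computation is then to evaluate the dissipation of $\poten$ along this rescaling flow. With $\eta_s:=(e^{-s}\mathrm{id})_\#\rho$, one has $\eta_s(x)=e^{s}\rho(e^{s}x)$, hence $\poten(\eta_s)=\frac1m\int a(x)\, e^{ms}\rho(e^{s}x)^m\dd x = \frac{e^{(m-1)s}}{m}\int a(e^{-s}y)\rho(y)^m\dd y$. Differentiating at $s=0$ gives
\begin{align*}
  -\frac{\dd}{\dd s}\Big|_{s=0}\poten(\eta_s)
  = -\frac{m-1}{m}\int a\rho^m\dd x + \frac1m\int x\,a'(x)\rho^m\dd x
  \ge -(m-1)\poten(\rho) - \frac{\|a'\|_{L^\infty}}{\mina}\,\poten(\rho)\cdot\sup(\cdot),
\end{align*}
which is not quite bounded below by a constant because of the factor $x$. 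The fix is to use the better-behaved variant: one instead applies Corollary \ref{cor:fi-cond} with the translation/dilation flow only after noticing that $x a'(x)$ grows linearly and $\rho^m$ has a first moment controlled by $\|\rho\|_{L^\infty}^{m-1}$ times $M_1(\rho)\le M_2(\rho)^{1/2}$; the uniform $L^\infty$-bound on $\bar\rho_\tau$ (propagated by the scheme, since $\poten$ is of porous-medium type and the JKO minimizer cannot increase the sup-norm — or, failing that, one uses only that $a'\in L^\infty$ and $a\ge\mina$). Concretely one obtains $\Kbound(\rho_\tau^n)\ge -c_1\poten(\rho_\tau^n)^{1/2}M_2(\rho_\tau^n)^{1/2} - c_2$ for constants depending only on $a$ and $m$, and since $\poten(\rho_\tau^n)\le\poten(\rho^0)$ by \eqref{eq:Lmbound}, this reads $\Kbound(\rho_\tau^n)\ge -c_3 M_2(\rho_\tau^n)^{1/2}-c_2$.

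Feeding this into \eqref{eq:fi-1} with $\kappa=1$ and dropping the (non-negative) Wasserstein term gives the discrete differential inequality
\begin{align*}
  \tfrac12 M_2(\rho_\tau^n) - \tfrac12 M_2(\rho_\tau^{n-1})
  \le \tau\,c_3\sqrt{M_2(\rho_\tau^n)} + \tau\,c_2 .
\end{align*}
Writing $y_n:=M_2(\rho_\tau^n)$, this is $y_n\le y_{n-1} + 2\tau c_3\sqrt{y_n}+2\tau c_2$, a discrete Bihari/Gronwall inequality whose solution satisfies $\sqrt{y_n}\le \sqrt{y_0} + c_4 n\tau$ for a suitable $c_4$ (absorb $2\tau c_3\sqrt{y_n}$ into the left side using $\sqrt{y_n}-\sqrt{y_{n-1}}=(y_n-y_{n-1})/(\sqrt{y_n}+\sqrt{y_{n-1}})$), hence $y_n\le 2 M_2(\rho^0)+2c_4^2 (n\tau)^2$. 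Recalling $\bar\rho_\tau(T,\cdot)=\rho_\tau^n$ for $(n-1)\tau<T\le n\tau$, so $n\tau\le T+\tau\le T+1$ (for $\tau\le1$), we obtain $M_2(\bar\rho_\tau(T))\le B(1+T)^2$; replacing $B(1+T)^2$ by the stated $B(1+T)$ requires only strengthening the claim to quadratic growth or, if one insists on linear growth, using $\kappa=1$ more carefully so that the $\frac\kappa2\wass^2$ term and the linear decay of $M_2$ under the flow cancel the linear-in-$x$ term (the $1$-convexity gives an extra $-\frac12 M_2$ on the right, producing $y_n\le (1-\tau)y_{n-1}+\dots$ and hence a uniform-in-time bound). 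The main obstacle is precisely this bookkeeping: showing that the unbounded weight $x a'(x)$ in the dissipation is tamed — either by the $L^\infty$-bound on $\bar\rho_\tau$ together with a first-moment estimate, or by exploiting the positive sign of $\kappa$ for the quadratic functional — so that the resulting discrete Gronwall inequality closes with at most linear growth in $T$.
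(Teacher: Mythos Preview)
Your overall strategy coincides with the paper's: apply the flow-interchange Corollary~\ref{cor:fi-cond} to the second-moment functional $\Psi=\mom$, whose $\kappa$-flow is the explicit dilation semigroup $\flow_\mom^s\eta(x)=e^{2s}\eta(e^{2s}x)$ (with $\kappa=2$; your normalisation $\Psi=\tfrac12\mom$, $\kappa=1$ is equivalent). Your computation of $\frac{d}{ds}\poten$ along this flow is also correct. The gap is in how you estimate the term $\frac1m\int x\,a'(x)\,\rho^m\,dx$.

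The paper's one-line observation, which you miss, is to factor
\[
  \int x\,a'(x)\,\rho(x)^m\,dx \;=\; \int \frac{x\,a'(x)}{a(x)}\cdot a(x)\,\rho(x)^m\,dx \;\ge\; \Big(\inf_{z\in\setR}\frac{z\,a'(z)}{a(z)}\Big)\int a\,\rho^m\,dx \;=\; m\Big(\inf_z\frac{za'(z)}{a(z)}\Big)\poten(\rho),
\]
which gives $\frac{d}{ds}\big|_{s=0}\poten(\flow_\mom^s\rho)\le M\poten(\rho)$ for a fixed constant $M$, hence $\Kbound_\mom=-M\poten$. Combined with the monotonicity $\poten(\rho_\tau^n)\le\poten(\rho^0)$ from \eqref{eq:Lmbound}, estimate \eqref{eq:fi-2} immediately reads $\mom(\rho_\tau^N)\le\mom(\rho^0)+M\poten(\rho^0)\,N\tau$: linear growth, no Gronwall needed.

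Your alternative route --- bounding $\int x\,a'\rho^m$ via $\|a'\|_{L^\infty}$ and a first moment of $\rho^m$ --- does not close as stated. Invoking an $L^\infty$ bound on $\bar\rho_\tau$ is circular (Proposition~\ref{prop:Linfty} is proved \emph{after} the present lemma and itself relies on flow interchange). The resulting discrete Bihari inequality yields at best $M_2\le B(1+T)^2$, not the claimed $B(1+T)$. And your suggestion that the positive $\kappa$ should repair this is unfounded: in \eqref{eq:fi-1} the term $\frac\kappa2\wass(\rho_\tau^n,\rho_\tau^{n-1})^2$ is unrelated to $M_2(\rho_\tau^n)$ and is simply dropped when $\kappa>0$.
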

The proof of Lemma \ref{lem:bound2nd} is obtained by a straight-forward application of the ``flow interchange'' Lemma \ref{lem:fi}.
Specifically, we choose as auxiliary functional $\Psi:=\mom$ the second moment,
\begin{align*}
  \mom(\rho) = \intX x^2\rho(x)\dd x.
\end{align*}
This functional possesses a $\kappa$-flow (with $\kappa=2$), which is explicitly given by
\begin{align*}
  \big(\flow_\mom^s\eta\big)(x) = e^{2s}\eta(e^{2s}x).
\end{align*}
Clearly, $s\mapsto \flow_\mom^s\eta$ defines a curve  in $L^m(\setR)$ for every $\eta\in L^m(\setR)$;
this curve is differentiable for every $s\ge0$.
\begin{proof}[Proof of Lemma \ref{lem:bound2nd}]
  In order to obtain a sensible bound $\Kbound_\mom$ in \eqref{eq:fi-cond},
  we compute the $s$-derivative of $\poten$ along $\flow_\mom^s$ for a given $\eta_0\in\dens\cap L^m(\setR)$:
  \begin{align*}
    \frac{\dd}{\dd s}\Big|_{s=0} \poten\big(\flow_\Psi^s \eta_0)
    &= \frac{\dd}{\dd s}\Big|_{s=0} \bigg( e^{2ms} \intX \frac{a(x)}{m}\eta_0(e^{2s}x)\dd x \bigg)
    = \frac{\dd}{\dd s}\Big|_{s=0} \bigg( e^{2(m-1)s} \intX \frac{a(e^{-2s}z)}{m}\eta_0(z)\dd z\bigg) \\
    &= 2(m-1)\intX \frac{a(z)}{m}\eta_0(z)\dd z - 2\intX \frac{za'(z)}{m}\eta_0(z\dd z
    \le M\poten(\eta_0)
  \end{align*}
  with the constant
  \begin{align*}
    M:= 2(m-1) - 2\inf_{z\in\setR}\frac{za'(z)}{a(z)} \ge 0.
  \end{align*}
  We apply Corollary \ref{cor:fi-cond} with $\Kbound_\mom:=-M\poten$;
  estimate \eqref{eq:fi-2} becomes
  \begin{align*}
    \intX x^2\rho_\tau^n(x)\dd x
    \le \intX x^2\rho^0(x)\dd x + \tau\sum_{n=1}^N\Kbound_\mom(\rho_\tau^n)
    \le \intX x^2\rho^0(x)\dd x + M\poten(\rho^0)\,N\tau,
  \end{align*}
  using the monotonicity \eqref{eq:Lmbound} of the potential.
  From here, \eqref{eq:bound2nd} follows immediately.
\end{proof}
We now turn to the proof of Lemma \ref{lem:bound1} above.
Also here, the key element is an application of Lemma \ref{lem:fi}.
This time, the auxiliary functional $\Psi:=\ent$ is the \emph{entropy},
\begin{align*}
  \ent(\rho) = \intX \rho(x)\log\rho(x)\dd x.
\end{align*}
One of the celebrated results from the theory of optimal transportation is
that this functional possesses a $\kappa$-flow $\flow_\ent$, with $\kappa=0$,
which is given by the heat semi-group, see e.g. \cite{daneri,JKO,villani_book}.
More precisely:
for given $\eta_0\in\dens$, the curve $s\mapsto\eta(s):=\flow_\ent^s\eta_0$ solves the initial value problem
\begin{align}
  \label{eq:heat}
  \partial_s\eta = \eta_{xx} , \quad \eta(0)=\eta_0,
\end{align}
in the classical sense:
$\eta(s)$ is a positive density function for every $s>0$,
it is continuously differentiable as a map from $\setR_+$ to $C^\infty(\setR)\cap L^1(\setR)$,
and if $\eta_0\in L^m(\setR)$, then $\eta(s)$ converges to $\eta_0$ in $L^m(\setR)$ as $s\downarrow0$.

In order to extract information from the flow interchange estimate for $\ent$,
we need another technical ingredient.
\begin{lem}
  \label{lem:boundent}
  There is a constant $C$ depending only on $\rho^0$ such that
  \begin{align}
    \label{eq:boundent}
    - C(1+T) \le \ent(\bar\rho_\tau(t)) \le C \quad \text{for all $T>0$}.
  \end{align}
\end{lem}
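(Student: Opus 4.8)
The plan is to prove the two-sided bound on the entropy $\ent(\bar\rho_\tau(t))$ by combining an upper bound that follows from interpolation and the already-established $L^m$ and second-moment estimates, together with a lower bound obtained from the flow interchange lemma applied to a suitable auxiliary functional.

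For the \emph{upper bound}, I would first decompose $\ent(\rho) = \intX \rho\log\rho\dd x$ using the elementary inequality $s\log s \le \frac{1}{m-1}(s^m - s)$ for $s\ge0$, valid since $m>1$; integrating over $\setR$ against the probability density $\rho=\bar\rho_\tau(t)$ gives $\ent(\bar\rho_\tau(t)) \le \frac{1}{m-1}(\|\bar\rho_\tau(t)\|_{L^m}^m - 1)$. By the canonical energy estimate \eqref{eq:Lmbound}, $\|\bar\rho_\tau(t)\|_{L^m}^m \le \frac{m}{\mina}\poten(\rho^0)$ uniformly in $\tau$ and $t$, which yields the constant upper bound $\ent(\bar\rho_\tau(t)) \le C$ after adjusting $C$.

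For the \emph{lower bound}, the standard trick is to control the negative part of $\rho\log\rho$ by the second moment. Writing $\ent(\rho) = \intX \rho\log\rho\dd x$, split the integral into the region where $\rho\le e^{-|x|}$ and its complement. On the complement, $\rho\log\rho \ge \rho\log(e^{-|x|}) = -|x|\rho \ge -\frac12(1+|x|^2)\rho$. On the region $\{\rho\le e^{-|x|}\}$, one has $\rho\log\rho \ge -\sup_{0\le s\le e^{-|x|}}|s\log s|$; since $|s\log s|$ is bounded by $e^{-|x|}$ (up to a constant) for $|x|$ large, this part is integrable and bounded by a universal constant. Altogether $\ent(\bar\rho_\tau(t)) \ge -\frac12 - \frac12\int|x|^2\bar\rho_\tau(t,x)\dd x - C_0$, and by Lemma \ref{lem:bound2nd} the second moment is bounded by $B(1+t)$, giving $\ent(\bar\rho_\tau(t)) \ge -C(1+T)$ for $t\in[0,T]$ after renaming constants.

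I do not expect a serious obstacle here; the result is essentially a consequence of the previously established estimates combined with textbook lower bounds on the Boltzmann entropy. The only mildly delicate point is the careful splitting in the lower bound to ensure the constant $C_0$ coming from the region $\{\rho \le e^{-|x|}\}$ is genuinely finite and independent of $\tau$ and $t$ — this just requires noting that $|s\log s| \le \sqrt{s}$ for $s$ small, so $\int_{\{\rho\le e^{-|x|}\}}|\rho\log\rho|\dd x \le \int e^{-|x|/2}\dd x < \infty$. Note also that Lemma \ref{lem:bound2nd} does not itself invoke Lemma \ref{lem:boundent}, so there is no circularity. (Alternatively, one could apply the flow interchange Lemma \ref{lem:fi} with $\Psi = \mom$ to get a dissipation estimate involving $\ent$ directly, but the direct interpolation argument above is cleaner and suffices.)
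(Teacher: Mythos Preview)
Your proposal is correct and follows essentially the same strategy as the paper: the upper bound comes from a pointwise inequality $s\log s\le C_m s^m$ combined with the $L^m$ estimate \eqref{eq:Lmbound}, and the lower bound comes from controlling the negative part of $\rho\log\rho$ by the second moment, then invoking Lemma~\ref{lem:bound2nd}. The only cosmetic difference is in the lower bound: the paper uses the pointwise inequality $s\log s\ge -\tfrac{2}{e}\sqrt{s}$ together with a Cauchy--Schwarz splitting against the weight $(1+x^2)^{-1/2}$, whereas you split the domain into $\{\rho\le e^{-|x|}\}$ and its complement; both are standard and equivalent in strength.
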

\begin{proof}[Proof of Lemma \ref{lem:boundent}]
  One verifies by elementary calculations that
  \begin{align*}
    -\frac2e s^{1/2}<s\log s<\frac1{(m-1)e}s^m\quad \text{for all $s>0$}.
  \end{align*}
  And so, for every $\eta\in\dens$, we have on one hand that
  \begin{align*}
    \ent(\eta) = \intX \eta(x)\log\eta(x)\dd x \le \frac1{(m-1)e}\intX \eta(x)^m\dd x \le \frac{m}{\mina(m-1)e}\poten(\eta),
  \end{align*}
  while on the other hand,
  \begin{align*}
    \ent(\eta) 
    &\ge -\frac2e\intX(1+x^2)^{-1/2}(1+x^2)^{1/2}\eta(x)^{-1/2}\dd x \\
    &\ge -\frac2e\bigg(\intX\frac{\dd x}{1+x^2}\bigg)^{1/2}\bigg(\intX(1+x^2)\eta(x)\dd x\bigg)^{1/2}
    \ge -\frac{2\sqrt\pi}e\bigg( 1 + \intX x^2\eta(x)\dd x\bigg)^{1/2}.
  \end{align*}
  Now, with $\bar\rho_\tau(t)$ in place of $\eta$,
  it is straightforward to conclude both estimates in \eqref{eq:boundent}
  by using the energy bound \eqref{eq:Lmbound} and the moment control \eqref{eq:bound2nd}, respectively.
\end{proof}
\begin{proof}[Proof of Lemma \ref{lem:bound1}]
  By the preceding discussion, $\flow_\ent$ satisfies the hypotheses of Corollary \ref{cor:fi-cond};
  we shall define a suitable lower bound $\Kbound_\ent$ for the use in \eqref{eq:fi-cond}.
  By the spatial regularity of $\eta(s)$ for every $s>0$, the following calculations are justified:
  \begin{align}
    \label{eq:diss_entropy}
    \begin{split}
      \partial_s\poten(\flow_\ent^s\eta_0)
      &= \frac1m\int_\setR a(x)\partial_s\big(\eta(s,x)^m\big)\dd x
      = \int_\setR a(x)\eta(s,x)^{m-1}\eta_{xx}(s,x)\dd x \\
      & = -(m-1) \int_\setR a(x)\eta(s,x)^{m-2}\eta_x(s,x)^2\dd x - \int_\setR a_x(x)\eta(s,x)^{m-1}\eta_x(s,x)\dd x \\
      & = -\frac{4(m-1)}{m^2} \int_\setR a(x) \big[\partial_x\big(\eta(s,x)^{m/2}\big)\big]^2\dd x + \frac1m\int_\setR a_{xx}(x)\eta(s,x)^m\dd x \\
      & \le -\frac{4(m-1)\underline{a}}{m^2} \int_\setR \big[\partial_x\big(\eta(s,x)^{m/2}\big)\big]^2\dd x
      + \frac{\maxaxx}m \int_\setR \eta(s,x)^m\dd x,
    \end{split}
  \end{align}
  where we have used the notation $\maxaxx:=\sup a_{xx}<+\infty$ in view of the property (a2) in Proposition \ref{prop:change_var}.
  Consequently, we define
  \begin{align}
    \label{eq:coercive}
    \Kbound_\ent(\rho) := \frac{4(m-1)\mina}{m^2} \intX \big[\partial_x\big(\rho^{m/2}\big)\big]^2\dd x
    - \frac{\maxaxx}m \intX \rho^m\dd x.
  \end{align}
  It is easily seen that \eqref{eq:fi-cond} is satisfied with this choice of $\Kbound$:
  indeed, since $\flow_\ent^s$ is continuous in $L^m(\setR)$ at $s=0$,
  it suffices to observe that $\Kbound_\ent$ is lower semi-continuous in $L^m(\setR)$ by Lemma \ref{lem:Hlsc}.
  So Corollary \ref{cor:fi-cond} is applicable,
  and from \eqref{eq:fi-2}, we conclude that
  \begin{align}
    \label{eq:entdissipate}
    \ent(\rho_\tau^N) + \frac{4(m-1)\mina}{m^2} \tau\sum_{n=1}^N\big\|\partial_x\big((\rho_\tau^n)^{m/2}\big)\big\|_{L^2}^2
    &\le \ent(\rho^0) + \frac{\maxaxx}m \tau\sum_{n=1}^N\big\| \rho_\tau^n\big\|_{L^m}^m
  \end{align}
  for every $N\in\setN$.
  In combination with the bound \eqref{eq:boundent} on $\ent$, it follows that
  \begin{align*}
    \frac{4(m-1)\mina}{m^2} \tau\sum_{n=1}^N\big\|\big(\rho_\tau^n\big)^{m/2}\big\|_{H^1}^2
    &\le \ent(\rho^0) -\ent(\rho_\tau^N) + \frac{\maxaxx}m \tau\sum_{n=1}^N\big\| \rho_\tau^n\big\|_{L^m}^m
    + \frac{4(m-1)\mina}{m^2} \tau\sum_{n=1}^N\big\|\rho_\tau^n\big\|_{L^m}^m\\
    &\le C(2+N\tau) + \bigg[\frac{\maxaxx}{\mina} + \frac{4(m-1)}{m}\bigg] \poten(\rho^0) N\tau .
  \end{align*}
  The energy estimate \eqref{eq:Lmbound} has been used to derive the last line.
  From here, it is immediate to conclude \eqref{eq:bound1}.
\end{proof}

\subsection{Weak and strong convergence of the discrete curves}
Our first convergence result is a standard consequence of the energy estimate.
\begin{lem}
  \label{lem:veryweak}
  Every vanishing sequence $(\tau_k)_{k\in\setN}$ of time steps $\tau_k>0$ contains a (non-relabelled) subsequence
  such that $\bar\rho_{\tau_k}$ converges --- uniformly on compact time intervals --- in $\wass$
  to a H\"older continuous limit curve $\rho_*:[0,\infty[\to\dens$.
\end{lem}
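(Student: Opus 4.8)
The plan is to combine the canonical energy estimate \eqref{eq:ee} with the standard compactness argument for minimizing movements. First I would observe that \eqref{eq:ee} gives a $\tau$-uniform discrete estimate controlling the sum of squared jumps, namely $\sum_{n\ge1}\wass(\rho_\tau^n,\rho_\tau^{n-1})^2\le 2\tau\poten(\rho^0)$. This is precisely the estimate that yields an (approximate) H\"older-$\tfrac12$ bound on the piecewise constant interpolant $\bar\rho_\tau$: for $0\le s<t$ one writes $\wass(\bar\rho_\tau(s),\bar\rho_\tau(t))\le\sum_{n=j+1}^{k}\wass(\rho_\tau^n,\rho_\tau^{n-1})$ where $(j-1)\tau<s\le j\tau$ and $(k-1)\tau<t\le k\tau$, and then applies Cauchy--Schwarz to get $\wass(\bar\rho_\tau(s),\bar\rho_\tau(t))^2\le (k-j)\sum_{n=j+1}^k\wass(\rho_\tau^n,\rho_\tau^{n-1})^2\le (k-j)\cdot 2\tau\poten(\rho^0)\le 2\poten(\rho^0)(|t-s|+\tau)$. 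Thus $\wass(\bar\rho_\tau(s),\bar\rho_\tau(t))\le\sqrt{2\poten(\rho^0)}\,(|t-s|+\tau)^{1/2}$.

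Next I would set up the compactness. For each fixed $t$, the bound \eqref{eq:Lmbound} gives $\|\bar\rho_\tau(t)\|_{L^m}^m\le \frac{m}{\mina}\poten(\rho^0)$, and the moment control from Lemma \ref{lem:bound2nd} gives $\int x^2\bar\rho_\tau(t,x)\dd x\le B(1+t)$; together these make $\{\bar\rho_\tau(t)\}_\tau$ a tight family, hence relatively compact in $(\overline{\dens},\wass)$ (on bounded time intervals the second-moment bound is uniform, so the $\wass$-compactness is uniform there too). Combined with the equicontinuity estimate from the first paragraph, a refined Arzel\`a--Ascoli argument (as in \cite[Proposition 3.3.1]{AGS}) applied along a countable dense set of times and then extended by the uniform modulus of continuity produces, for any vanishing sequence $(\tau_k)$, a subsequence (not relabelled) and a limit curve $\rho_*:[0,\infty[\to(\overline{\dens},\wass)$ such that $\bar\rho_{\tau_k}(t)\to\rho_*(t)$ in $\wass$ uniformly on compact time intervals. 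Passing the equicontinuity bound to the limit (with $\tau_k\to0$) shows $\rho_*$ is H\"older-$\tfrac12$ continuous. Finally, lower semicontinuity of $\poten$ under $\wass$-convergence together with \eqref{eq:Lmbound} forces $\poten(\rho_*(t))<\infty$, in particular $\rho_*(t)\in L^m(\setR)$, so $\rho_*(t)\in\dens$ for every $t$; and since $\rho_\tau^0=\rho^0$ for all $\tau$, we have $\rho_*(0)=\rho^0$.

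The only mild subtlety — and the step I would be most careful about — is making sure the limit curve takes values in the density space $\dens$ rather than merely in the completion $\overline{\dens}$ of measures: a priori the $\wass$-limit of absolutely continuous measures could develop a singular part. This is ruled out exactly by the uniform $L^m$ bound \eqref{eq:Lmbound} and the lower semicontinuity of $\poten$, which is why it is essential to have the energy estimate in hand before invoking Ascoli. Everything else is the textbook minimizing-movement compactness argument, so I would keep the exposition brief and cite \cite{AGS} for the general Ascoli-type statement, noting that the stronger convergence in $L^m(]0,T[\times\setR)$ claimed in Theorem \ref{thm:main} is deferred, as it will require the refined $H^1$ estimate of Lemma \ref{lem:bound1} together with an Aubin--Lions-type argument.
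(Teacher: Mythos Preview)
Your proof is correct and follows essentially the same approach as the paper: derive the approximate H\"older-$\tfrac12$ estimate from \eqref{eq:ee} via Cauchy--Schwarz, then invoke the refined Arzel\`a--Ascoli theorem \cite[Proposition 3.3.1]{AGS}. One minor remark: the appeal to Lemma~\ref{lem:bound2nd} for pointwise compactness is not needed, since your own equicontinuity bound applied with $s=0$ already gives $\wass(\bar\rho_\tau(t),\rho^0)\le\sqrt{2\poten(\rho^0)}(t+\tau)^{1/2}$, which controls the second moment uniformly on compact time intervals; your additional care about showing $\rho_*(t)\in\dens$ rather than merely $\overline{\dens}$ is a point the paper's terse proof leaves implicit.
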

\begin{proof}
  From \eqref{eq:ee}, we conclude the H\"older-type estimate
  \begin{align}
    \label{eq:holder}
    \wass\big(\bar\rho_\tau(t),\bar\rho_\tau(s)\big) \le \sqrt{2\mathcal{F}(\rho_\tau^0)} \max(\tau,|t-s|)^{1/2}
  \end{align}
  for arbitrary $s,t\ge0$.
  The claim is now obtained
  as a consequence of (a refined version of) the Ascoli-Arzel\'{a} theorem \cite[Proposition 3.3.1]{AGS}.
\end{proof}
In the following, let a sequence $(\tau_k)$ be fixed along which $\bar\rho_{\tau_k}$ converges to some limit $\rho_*$
in the sense described in Lemma \ref{lem:veryweak}.
For the derivation of the entropy formulation \eqref{eq:entropyformulation} for $u_*=\rescale^{-1}[\rho_*]$ in the next section,
the following stronger convergence result is needed.
\begin{prop}[Strong $L^m$-compactness]
  \label{prop:compact}
  The curves $\bar\rho_{\tau_k}:[0,\infty[\to\dens$ converge to $\rho_*$ in $L^m(]0,T[\times\setR)$ for every $T>0$.
\end{prop}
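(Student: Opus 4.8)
The plan is to establish strong $L^m$-convergence of $\bar\rho_{\tau_k}$ to $\rho_*$ on $]0,T[\times\setR$ by combining the spatial regularity from Lemma \ref{lem:bound1} with the Wasserstein-in-time regularity from Lemma \ref{lem:veryweak}, via an Aubin--Lions-type compactness argument adapted to the Wasserstein setting. First I would recall that, by Lemma \ref{lem:bound1}, the family $\{\bar\rho_{\tau_k}^{m/2}\}$ is bounded in $L^2(0,T;H^1(\setR))$; hence $\{\bar\rho_{\tau_k}\}$ is bounded in $L^2(0,T;L^p_{\loc})$ for suitable $p$ (actually $\bar\rho_{\tau_k}^{m/2}$ is locally equibounded in $H^1$, so by Sobolev embedding locally equibounded in $L^\infty$ in space for a.e. $t$ in an $L^2(0,T)$ sense), and in particular $\{\bar\rho_{\tau_k}\}$ is tight and bounded in $L^m(]0,T[\times\setR)$ by \eqref{eq:Lmbound}. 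This gives weak-$L^m$ relative compactness; the point is to upgrade it to strong.

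Next I would exploit the ``space-regularity plus time-regularity'' mechanism. The Hölder estimate \eqref{eq:holder} controls time oscillations in $\wass$, while Lemma \ref{lem:bound1} controls space oscillations in $H^1$ (for $\bar\rho_{\tau_k}^{m/2}$). A clean way to conclude is to use the generalized Aubin--Lions--Simon lemma for measure-valued curves (as in Rossi--Savaré, or the version used in \cite{matthes_mccann_savare}): if a sequence is bounded in $L^q(0,T;X)$ for a space $X$ compactly embedded in $L^m_{\loc}$, and equi-continuous (or equi-Hölder) in time with respect to a weaker metric such as $\wass$ that is comparable to convergence in a space $Y \supset L^m$, then the sequence is relatively compact in $L^q(0,T;L^m_{\loc})$. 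Concretely, I would fix a large ball $B_R$, note $H^1(B_R) \hookrightarrow\hookrightarrow L^2(B_R)$, and translate the $\wass$-Hölder bound into an estimate of the form $\|\bar\rho_{\tau_k}(t)-\bar\rho_{\tau_k}(s)\|_{(C^1_c)^*} \lesssim \wass(\bar\rho_{\tau_k}(t),\bar\rho_{\tau_k}(s)) \lesssim |t-s|^{1/2}+\tau_k^{1/2}$, using the standard estimate $\big|\int\phi\,\mathrm d(\mu-\nu)\big|\le \mathrm{Lip}(\phi)\,\wass(\mu,\nu)$ (Kantorovich duality). This supplies the needed time-continuity in a negative Sobolev norm, and Aubin--Lions then gives strong convergence of (a subsequence of) $\bar\rho_{\tau_k}$ in $L^2(0,T;L^2(B_R))$, hence pointwise a.e. on $]0,T[\times B_R$ along a further subsequence. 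Since the whole sequence already converges to $\rho_*$ in $\wass$ uniformly in $t$, the a.e. limit must be $\rho_*$, so no relabeling of the subsequence is lost.

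Finally I would promote a.e. convergence on bounded cylinders to strong $L^m(]0,T[\times\setR)$ convergence. A.e. convergence plus the uniform bound $\|\bar\rho_{\tau_k}\|_{L^m(]0,T[\times\setR)}\le (m/\mina)^{1/m}T^{1/m}\poten(\rho^0)^{1/m}$ gives, by Vitali (or the Brezis--Lieb lemma together with weak-$L^m$ convergence, which pins the limit), strong convergence in $L^m(]0,T[\times B_R)$ for every $R$. To handle the tails I would use the second-moment bound from Lemma \ref{lem:bound2nd}: $\int_0^T\int_{|x|>R}\bar\rho_{\tau_k}^m\,\mathrm dx\,\mathrm dt$ is small uniformly in $k$ because $\bar\rho_{\tau_k}^{m/2}$ is bounded in $L^2(0,T;L^\infty)$ (from $H^1$ in one dimension) while $\int\bar\rho_{\tau_k}(t)$ restricted to $\{|x|>R\}$ is controlled by $R^{-2}$ times the second moment, so an interpolation/Hölder argument gives a tail bound vanishing as $R\to\infty$, uniformly in $k$. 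Combining the local strong convergence with the uniform smallness of tails yields $\bar\rho_{\tau_k}\to\rho_*$ strongly in $L^m(]0,T[\times\setR)$. The main obstacle I anticipate is the precise formulation and invocation of the Aubin--Lions lemma in the measure-valued/Wasserstein setting: one must choose the right triple of spaces and verify that $\wass$-Hölder continuity really translates into equicontinuity in the weak norm $Y$, and that the compact embedding $X\hookrightarrow\hookrightarrow L^m_{\loc}$ is available with the regularity $\bar\rho_{\tau_k}^{m/2}\in H^1$ rather than $\bar\rho_{\tau_k}$ itself — this requires a short but careful argument passing through the composition $\rho\mapsto\rho^{m/2}$ and back.
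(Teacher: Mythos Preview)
Your strategy is essentially the paper's: combine the $L^2(0,T;H^1)$ bound on $\bar\rho_{\tau_k}^{m/2}$ from Lemma~\ref{lem:bound1}, the second-moment control from Lemma~\ref{lem:bound2nd}, and the Wasserstein time-regularity \eqref{eq:holder}, and feed them into an Aubin--Lions--type compactness result; you even name Rossi--Savar\'e as the relevant tool. The paper's execution differs from your ``local $+$ tails'' outline in one useful way: it applies the abstract Rossi--Savar\'e theorem directly on $X=L^m(\setR)$, taking as coercive integrand
\[
\mathfrak{F}(\rho)=\int_\setR\big[\partial_x(\rho^{m/2})\big]^2\dd x+\int_\setR x^2\rho\,\dd x,
\]
so that the second moment is built into $\mathfrak{F}$ and the sublevels are shown (via Fr\'echet--Kolmogorov on $\rho^{m/2}\in L^2(\setR)$, then pulling back through $\eta\mapsto\eta^{2/m}$) to be relatively compact in $L^m(\setR)$ \emph{globally}. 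This packaging sidesteps both issues you flag at the end: there is no need to pick a Banach triple or to translate $\wass$ into a negative Sobolev norm (the theorem takes $g=\wass$ as a pseudo-distance directly), and the nonlinearity $\rho\mapsto\rho^{m/2}$ is absorbed into the definition of $\mathfrak{F}$ rather than into the embedding. The separate tail argument is then unnecessary. Your route would also work, but the paper's is shorter.
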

The proof of Proposition \ref{prop:compact} is obtained
via an extension of the Aubin-Lions lemma as given in \cite{rossi_savare}.
The precise statement is recalled here for convenience:
\begin{thm}[Theorem 2 in \cite{rossi_savare}]
  \label{thm:savare}
  On a Banach space $X$, let be given
  \begin{itemize}
  \item a \emph{normal coercive integrand} $\mathfrak{F}:X\to[0,+\infty]$, i.e.,
    $\mathfrak{F}$ is lower semi-continuous and its sublevels are relatively compact in $X$;
  \item a \emph{pseudo-distance} $g:X\times X\to[0,+\infty]$, i.e.,
    $g$ is lower semi-continuous,
    and $g(\rho,\eta)=0$ for any $\rho,\eta\in X$ with $\mathfrak{F}(\rho)<\infty$, $\mathfrak{F}(\eta)<\infty$ implies $\rho=\eta$.
  \end{itemize}
  Let further $U$ be a set of measurable functions $u:]0,T[\to X$, with a fixed $T>0$.
  Under the hypotheses that
  \begin{align}
    \label{eq:savare_hypo}
    \sup_{u\in U}\int_0^T \mathfrak{F}(u(t)) \dd t<\infty
    \quad\text{and}\quad
    \lim_{h\downarrow0}\sup_{u\in U}\int_0^{T-h} g(u(t+h),u(t))\dd t=0,
  \end{align}
  $U$ contains an infinite sequence $(u_n)_{n\in\setN}$ that converges in measure (with respect to $t\in]0,T[$) to a limit $u_*:]0,T[ \to X$.
\end{thm}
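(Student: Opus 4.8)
The plan is to reproduce the argument of Rossi and Savaré \cite{rossi_savare}; it follows the classical Aubin--Lions--Simon scheme, with the compact embedding $X_0\hookrightarrow\hookrightarrow X$ replaced by the coercive integrand $\mathfrak{F}$ and the bound on the distributional time derivative replaced by the integral bound on the pseudo-distance $g$. The first ingredient is a quantitative ``Ehrling-type'' estimate: for every $R>0$ and every $\sigma>0$ there exists $\eta(R,\sigma)>0$ such that $\mathfrak{F}(x)\le R$, $\mathfrak{F}(y)\le R$ and $g(x,y)\le\eta(R,\sigma)$ together imply $\|x-y\|_X\le\sigma$. I would prove this by contradiction: otherwise there are $x_j,y_j$ with $\mathfrak{F}(x_j),\mathfrak{F}(y_j)\le R$, $g(x_j,y_j)\to0$ and $\|x_j-y_j\|_X\ge\sigma$; relative compactness of the sublevel $\{\mathfrak{F}\le R\}$ gives, along a subsequence, $x_j\to x$ and $y_j\to y$ in $X$; lower semicontinuity of $\mathfrak{F}$ keeps $\mathfrak{F}(x),\mathfrak{F}(y)\le R<\infty$ and lower semicontinuity of $g$ forces $g(x,y)=0$, whence $x=y$ by the defining property of the pseudo-distance --- contradicting $\|x-y\|_X=\lim\|x_j-y_j\|_X\ge\sigma$.

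Next I would extract time-uniform tightness and translation-equicontinuity in measure. Writing $S:=\sup_{u\in U}\int_0^T\mathfrak{F}(u(t))\dd t<\infty$, Chebyshev's inequality gives, for every $\delta>0$ and $R_\delta:=S/\delta$, the bound $\sup_{u\in U}|\{t:\mathfrak{F}(u(t))>R_\delta\}|\le\delta$, with $\widehat K_\delta:=\overline{\{x:\mathfrak{F}(x)\le R_\delta\}}$ compact in $X$; so, off a time set of measure $\le\delta$ that is uniform over $U$, all values $u(t)$ lie in the fixed compact set $\widehat K_\delta$. For the translation estimate, fix $\sigma>0$ and $\delta>0$, set $R=R_\delta$ and $\eta=\eta(R,\sigma)$. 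For $0<h<T$ and $u\in U$, split $]0,T-h[$ into the set where $\mathfrak{F}(u(t))>R$ or $\mathfrak{F}(u(t+h))>R$ (of measure $\le2\delta$) and its complement; on the complement the quantitative Ehrling estimate shows that $\|u(t+h)-u(t)\|_X>\sigma$ can occur only where $g(u(t+h),u(t))>\eta$, a set of measure $\le\eta^{-1}\int_0^{T-h}g(u(t+h),u(t))\dd t$. Hence $\sup_{u\in U}|\{t\in{]0,T-h[}:\|u(t+h)-u(t)\|_X>\sigma\}|\le2\delta+\eta^{-1}\sup_{u\in U}\int_0^{T-h}g(u(t+h),u(t))\dd t$, and letting $h\downarrow0$ (the second hypothesis kills the last term) and then $\delta\downarrow0$ yields $\lim_{h\downarrow0}\sup_{u\in U}|\{t:\|u(t+h)-u(t)\|_X>\sigma\}|=0$ for every $\sigma>0$.

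Finally I would convert these two facts into relative compactness of $U$ in $L^0(]0,T[;X)$, endowed with the complete metric $d_0(u,v):=\int_0^T\min(1,\|u(t)-v(t)\|_X)\dd t$ that metrizes convergence in measure; it suffices to show $U$ is totally bounded, after which a $d_0$-Cauchy, hence $d_0$-convergent, sequence $(u_n)\subset U$ can be extracted, with limit $u_*$ converging in measure. Given $\varepsilon>0$, pick $\delta<\varepsilon/3$ with the bad sets $B_u:=\{t:\mathfrak{F}(u(t))>R_\delta\}$ of measure $\le\delta$ uniformly, and replace each $u$ by the truncation $w_u$ equal to $u$ on $]0,T[\setminus B_u$ and to a fixed point of $\widehat K_\delta$ on $B_u$; then $w_u$ is valued in the fixed compact (hence bounded) set $\widehat K_\delta$, $d_0(u,w_u)\le\delta$, and $w_u$ inherits translation-equicontinuity in measure up to a $3\delta$-error. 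Since now the values are bounded, the Bochner averages $w_{u,h}(t):=h^{-1}\int_t^{t+h}w_u(s)\dd s$ are genuinely defined, take values in the compact convex set $\overline{\mathrm{conv}}\,\widehat K_\delta$ (Mazur), and are equi-Lipschitz in $t$; hence $\{w_{u,h}:u\in U\}$ is relatively compact in $C([0,T-h];X)$ by Arzelà--Ascoli, a fortiori $d_0$-totally bounded, while the translation estimate gives $\sup_{u\in U}d_0(w_u,w_{u,h})\le\varepsilon/3$ for $h$ small. Combining, $U$ is covered by finitely many $d_0$-balls of radius $\varepsilon$.

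The delicate point I anticipate is precisely this last step: $u$ need not be Bochner integrable, since on the bad time sets $\mathfrak{F}(u(t))$, and hence $\|u(t)\|_X$, is entirely unconstrained, so the averaging regularization cannot be applied to $u$ directly. The remedy is to first project $u$ onto a fixed compact subset of $X$ by truncating it to a fixed point on the bad set, at the cost of an error controlled by the measure of that set; only then does the classical mollification-plus-Arzelà--Ascoli argument apply. Carrying this truncation error uniformly through the diagonal procedure, and checking that the truncated functions still obey a translation-equicontinuity estimate in measure, is where the real bookkeeping lies; the earlier steps are comparatively soft, relying only on compactness of sublevels, lower semicontinuity (for the quantitative Ehrling estimate), and Chebyshev's inequality (for tightness).
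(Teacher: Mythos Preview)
The paper does not prove this theorem. It is stated only as a quotation of Theorem~2 from Rossi and Savar\'e \cite{rossi_savare}, introduced with ``The precise statement is recalled here for convenience,'' and is then applied as a black box in the proof of Proposition~\ref{prop:compact}. There is therefore no proof in the paper to compare your proposal against.

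That said, your sketch is a faithful reconstruction of the Rossi--Savar\'e argument: the contradiction-based Ehrling-type lemma using compactness of sublevels and lower semicontinuity of $g$, the Chebyshev tightness-in-measure step, the reduction of translation equicontinuity in measure to the hypothesis on $g$, and the final total-boundedness argument in $L^0(]0,T[;X)$ via truncation onto a fixed compact set followed by mollification and Arzel\`a--Ascoli are exactly the ingredients of the original proof. Your identification of the delicate point --- that $u$ is not a priori Bochner integrable, forcing truncation before averaging --- is also correct. If anything, you might tighten the bookkeeping around the claim that the truncated functions $w_u$ inherit translation equicontinuity in measure: the set where $w_u(t+h)\neq u(t+h)$ or $w_u(t)\neq u(t)$ has measure at most $2\delta$, so the error term is absorbed exactly as you indicate, but it is worth writing this inequality out explicitly rather than leaving it as ``up to a $3\delta$-error.''
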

\begin{proof}[Proof of Proposition \ref{prop:compact}]
  Fix some $T>0$.
  We verify the hypotheses of Theorem \ref{thm:savare} for a specific choice of $X$, $\mathfrak{F}$, $g$ and $U$.
  First, let $X:=L^m(\setR)$.
  Next, define
  \begin{align*}
    g(\rho,\eta) :=
    \begin{cases}
      \wass(\rho,\eta) & \text{if $\rho,\eta\in\dens$}, \\
      + \infty & \text{otherwise}.
    \end{cases}
  \end{align*}
  Finally, let $\mathfrak{F}$ be given by
  \begin{align*}
    \mathfrak{F}(\rho) =
    \begin{cases}
      \int_\setR \big[\partial_x\big(\rho(x)^{m/2}\big)\big]^2\dd x + \int_\setR x^2\rho(x)\dd x
      & \text{if $\rho\in\dens$ and $\partial_x(\rho^{m/2})\in L^2(\setR)$}, \\
      + \infty & \text{otherwise}.
    \end{cases}
  \end{align*}
  Since any elements $\rho$ and $\eta$ in the proper domain of $\mathfrak{F}$ belong to $\dens$,
  it is clear that $0=g(\rho,\eta)=\wass(\rho,\eta)$ implies $\rho=\eta$.
  Further, the lower semi-continuity of $\mathfrak{F}$ on $L^m(\setR)$ follows from Lemma \ref{lem:Hlsc} in the Appendix \ref{sec:app_LSC}.
  Next we show that --- for any given $c>0$ --- the sublevel $A_c:=\{\rho\in L^m(\setR)|\mathfrak{F}(\rho)\le c\}$ is relatively compact in $L^m(\setR)$.
  To this end, we shall prove below that $B_c:=\{\eta=\rho^{m/2}|\rho\in A_c\}$ is relatively compact in $L^2(\setR)$;
  since the map $\iota:L^2(\setR)\to L^m(\setR)$ with $\iota(\eta)=\eta^{2/m}$ is continuous,
  it then follows that $A_c = \iota(B_c)$ is a relatively compact set in $L^m(\setR)$.
  To show relative compactness of $B_c$ in $L^2(\setR)$,
  we verify the hypotheses of the Frech\'{e}t-Kolmogorov theorem, see e.g. \cite[Theorem IV.8.20]{DunfordSchwartz}.

  \emph{$B_c$ is bounded in $L^2(\setR)$.}
  For any given $q\ge1$, the Gagliardo-Nirenberg interpolation inequality (in combination with H\"older's inequality if $m>2$) provides
  \begin{align}
    \label{eq:gagliardo}
    \int_\setR\eta^{2q}\dd x \le K\bigg(\int_\setR(\partial_x\eta)^2\dd x\bigg)^{\frac{mq-1}{m+1}} \bigg(\int_\setR\eta^{2/m}\dd x\bigg)^{\frac{m(q+1)}{m+1}}.
  \end{align}
  Boundedness of $B_c$ in $L^2(\setR)$ follows directly with the choice $q=1$;
  recall that $\eta^{2/m}=\rho$ lies in the proper domain of $\mathfrak{F}$ and thus has integral one.

  \emph{$B_c$ is tight under translations.}
  For every $\eta\in B_c$ and any $h>0$, we have that
  \begin{align*}
    \int_\setR |\eta(x+h)-\eta(x)|^2\dd x
    = \int_\setR \bigg|\int_0^h \partial_x\eta(x+z)\dd z\bigg|^2\dd x
    \le h \int_0^h \int_\setR \big(\partial_x\eta(x)\big)^2\dd x\dd z
    \le ch^2.
  \end{align*}
  Thus the integral converges to zero uniformly on $B_c$ as $h\downarrow0$.

  \emph{Elements of $B_c$ are uniformly decaying at infinity.}
  For every $\eta\in B_c$ and any $R>0$, we have
  \begin{align*}
    \int_{|x|>R} \eta(x)^2\dd x \le \frac1R \intX |x|\eta(x)^2\dd x
    \le \frac1R \bigg(\intX x^2\eta(x)^{2/m}\dd x\bigg)^{1/2}\bigg(\intX \eta(x)^{4-2/m}\dd x\bigg)^{1/2}.
  \end{align*}
  Inside the last expression, the first integral is less than $c$,
  and also the second one is controlled in terms of $c$, using inequality \eqref{eq:gagliardo} with $q=2-1/m>1$.

  In conclusion, $B_c$ satisfies the hypotheses of the Fr\'{e}chet-Kolmogorov compactness theorem.
  It follows that $\mathfrak F$ has compact sublevels.

  We turn to verify the two hypotheses in \eqref{eq:savare_hypo} for the set $U:=\{\bar\rho_{\tau_k}|k\in\setN\}$.
  The first hypothesis is satisfied because of \eqref{eq:bound1} and \eqref{eq:bound2nd}.
  We establish the second hypotheses as a consequence of the $\tau$-uniform approximate H\"older continuity \eqref{eq:holder}.
  For this, pick $T>0$ and $h\in]0,1[$ arbitrary.
  Given $k\in\setN$, define $N_k\in\setN$ such that $(N_k-1)\tau_k<T\le N_k\tau_k$.
  We distinguish two cases.
  If $0<h<\tau_k$, then (writing $\tau=\tau_k$ and $N=N_k$ for ease of notation)
  \begin{align*}
    \int_0^{T-h} \wass\big(\bar\rho_\tau(t+h),\bar\rho_\tau(t)\big)\dd t
    &\le \tau\sum_{n=0}^{N-1} \frac{h}\tau \wass(\rho_\tau^{n+1},\rho_\tau^n)
    \le h N^{1/2}\bigg(\sum_{\ell=0}^\infty\wass(\rho_\tau^{\ell+1},\rho_\tau^\ell)^2\bigg)^{1/2} \\
    &\le h\big(2 \tau N\poten(\rho_\tau^0)\big)^{1/2}
    \le \big(2(T+1)\poten(\rho_\tau^0)\big)^{1/2}h
  \end{align*}
  by \eqref{eq:ee}.
  If, on the other hand, $h\ge\tau_k$, then there is a $J\in\setN$ with $h\le J\tau_k\le2h$,
  and so
  \begin{align*}
    &\int_0^{T-h} \wass\big(\bar\rho_\tau(t+h),\bar\rho_\tau(t)\big)\dd t
    \le \tau\sum_{n=0}^{N-1} \sum_{j=0}^{J-1} \wass(\rho_\tau^{n+j+1},\rho_\tau^{n+j}) \\
    & \qquad \le \tau \sum_{n=0}^{N-1} \bigg( J^{1/2} \bigg[\sum_{\ell=0}^\infty \wass(\rho_\tau^{\ell+1},\rho_\tau^\ell)^2 \bigg]^{1/2}\bigg)
    \le N\tau \big( 2J\tau\poten(\rho^0) \big)^{1/2} \le 2(T+1)\poten(\rho^0)^{1/2}\,h^{1/2}.
  \end{align*}
  Theorem \ref{thm:savare} now provides for every subsequence $(\tau_{k'})\subseteq(\tau_k)$
  the existence of a subsubsequence $(\tau_{k''})\subseteq(\tau_{k''})$ such that
  $\bar\rho_{\tau_{k''}}$ converges in measure with respect to $t\in]0,T[$ in $L^m(\setR)$ to some limit $\rho_+$.
  By convergence of $\bar\rho_{\tau_k}(t)$ to $\rho_*(t)$ in $\wass$ for every $t\in[0,T]$,
  it follows that $\rho_+=\rho_*$.
  By the usual arguments, we conclude that the entire sequence $(\bar\rho_{\tau_k})$ converges to $\rho_*$ in measure.
  In combination with the $\tau$-uniform bound \eqref{eq:Lmbound} of $\bar\rho_\tau$ in $L^m(\setR)$,
  we can invoke Lebesgue's dominated convergence theorem to conclude strong convergence
  of $\bar\rho_{\tau_k}$ to $\rho_*$ in $L^m(0,T;L^m(\setR))$.
\end{proof}
\begin{cor}
  \label{cor:regularity}
  For every $T>0$, we have $\rho_*^{m/2}\in L^2(0,T;H^1(\setR))$
  and $\partial_x(\rho_*^m)\in L^1(0,T;L^1(\setR))$.
\end{cor}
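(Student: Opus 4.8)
The plan is to extract the two regularity statements from the uniform bounds already at our disposal. For the first claim, recall from Lemma \ref{lem:bound1} that the piecewise constant interpolants $\bar\rho_{\tau_k}$ satisfy $\|\bar\rho_{\tau_k}^{m/2}\|_{L^2(0,T;H^1(\setR))}\le A(1+T)$ uniformly in $k$. By weak compactness of bounded sets in the Hilbert space $L^2(0,T;H^1(\setR))$, a subsequence of $\bar\rho_{\tau_k}^{m/2}$ converges weakly there to some limit $w$. On the other hand, Proposition \ref{prop:compact} gives strong convergence $\bar\rho_{\tau_k}\to\rho_*$ in $L^m(]0,T[\times\setR)$, hence (passing to a further subsequence) pointwise a.e.\ convergence, so that $\bar\rho_{\tau_k}^{m/2}\to\rho_*^{m/2}$ pointwise a.e.\ as well. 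Combined with the $L^2(0,T;L^2(\setR))$-boundedness coming from the uniform $L^m$-bound \eqref{eq:Lmbound}, Vitali's theorem (or dominated convergence after extracting the dominating function from the uniform bounds) identifies the weak limit: $w=\rho_*^{m/2}$. Since weak limits respect norm bounds, we conclude $\rho_*^{m/2}\in L^2(0,T;H^1(\setR))$, which is the first assertion.

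For the second claim, write
\begin{align*}
  \partial_x(\rho_*^m) = \partial_x\big((\rho_*^{m/2})^2\big) = 2\,\rho_*^{m/2}\,\partial_x(\rho_*^{m/2}),
\end{align*}
which makes sense a.e.\ since $\rho_*^{m/2}$ has a weak derivative in $L^2$ for a.e.\ $t$ by the first part. By the Cauchy--Schwarz inequality in $x$,
\begin{align*}
  \intX \big|\partial_x(\rho_*^m)\big|\dd x
  \le 2\bigg(\intX (\rho_*^{m/2})^2\dd x\bigg)^{1/2}\bigg(\intX \big(\partial_x(\rho_*^{m/2})\big)^2\dd x\bigg)^{1/2}
  = 2\,\|\rho_*\|_{L^m}^{m/2}\,\big\|\partial_x(\rho_*^{m/2})\big\|_{L^2}.
\end{align*}
Integrating in $t\in]0,T[$ and applying Cauchy--Schwarz once more in time, the right-hand side is bounded by $2\,\|\rho_*\|_{L^\infty(0,T;L^m)}^{m/2}\,\|\partial_x(\rho_*^{m/2})\|_{L^2(0,T;L^2)}$, which is finite: the first factor is controlled by the energy estimate \eqref{eq:Lmbound} (which passes to the limit by lower semicontinuity), and the second by the first part of this corollary. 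Hence $\partial_x(\rho_*^m)\in L^1(0,T;L^1(\setR))$.

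I do not expect a serious obstacle here; the statement is essentially a bookkeeping consequence of Lemma \ref{lem:bound1}, Proposition \ref{prop:compact} and \eqref{eq:Lmbound}. The one point requiring a little care is the identification of the weak $L^2(0,T;H^1)$-limit of $\bar\rho_{\tau_k}^{m/2}$ with $\rho_*^{m/2}$: one must combine the strong $L^m$-convergence (giving a.e.\ convergence of $\bar\rho_{\tau_k}$, hence of $\bar\rho_{\tau_k}^{m/2}$) with the weak convergence in $L^2(0,T;H^1)$, using uniqueness of limits in the sense of distributions to glue the two. The chain rule $\partial_x(\rho_*^m)=2\rho_*^{m/2}\partial_x(\rho_*^{m/2})$ is valid because $\rho_*^{m/2}\in H^1(\setR)$ for a.e.\ $t$ and $m/2\cdot 2=m\ge m/2+1$ ensures the product lies in the right space — but since we only claim membership and not an identity that is delicate, this is routine.
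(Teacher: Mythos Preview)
Your proposal is correct and follows essentially the same route as the paper: uniform $L^2(0,T;H^1)$-bounds from Lemma~\ref{lem:bound1}, weak compactness, identification of the limit via Proposition~\ref{prop:compact}, and then the chain rule $\partial_x(\rho_*^m)=2\rho_*^{m/2}\partial_x(\rho_*^{m/2})$ for the second claim. The only cosmetic difference is in the identification step: the paper observes directly that strong $L^m$-convergence of $\bar\rho_{\tau_k}$ is equivalent to strong $L^2$-convergence of $\bar\rho_{\tau_k}^{m/2}$ (since $\|\eta^{m/2}\|_{L^2}^2=\|\eta\|_{L^m}^m$ and the map $\eta\mapsto\eta^{m/2}$ is continuous between these spaces), whereas you pass through a.e.\ convergence---both are fine.
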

\begin{proof}
  Fix $T>0$.
  By estimate \eqref{eq:bound1},
  $\bar\rho_{\tau_k}^{m/2}$ is uniformly bounded in the reflexive Banach space $L^2(0,T;H^1(\setR))$.
  By Alaoglu's theorem, there is a subsequence $(\tau_{k'})\subseteq(\tau_{k})$ such that
  $\bar\rho_{\tau_{k'}}^{m/2}$ converges weakly to some limit $\zeta$ in that space.
  Since $\bar\rho_{\tau_{k'}}^{m/2}$ converges strongly to $\rho_*^{m/2}$ in $L^2(0,T;L^2(\setR))$ by Proposition \eqref{prop:compact},
  it follows that $\rho_*^{m/2}=\zeta\in L^2(0,T;H^1(\setR))$.

  The second claim is a trivial consequence of the representation $\partial_x(\rho_*^m)=2\rho_*^{m/2}\partial_x(\rho_*^{m/2})$.
\end{proof}

\subsection{Derivation of the entropy formulation}\label{subsec:derivation_entropy}
In this subsection we show that $u_*=\rescale^{-1}[\rho_*]$ satisfies the entropy formulation \eqref{eq:entropyformulation}.
The following proposition plays the role of Lemma 2.4 in \cite{karlsen2003}.
We emphasize that although the entropy inequality contains the same dissipation term as in \cite{karlsen2003},
we derive it from a completely different source.
Here, it results naturally from the variational construction by minimizing movements.
\begin{prop}
  \label{prop:karlsen}
  Define $u_*=\rescale^{-1}[\rho_*]$ from $\rho_*$ via scaling, see \eqref{eq:defscaling}.
  Then $u_*$ satisfies the entropy inequality \eqref{eq:entropyformulation}
  for any $k>0$ and for any non-negative test function $\varphi\in C^\infty_c(\setR_+\times\setR)$.
\end{prop}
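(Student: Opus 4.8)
The plan is to derive the entropy inequality \eqref{eq:entropyformulation} by first establishing a \emph{discrete} version at the level of the JKO iterates $\rho_\tau^n$, using the flow interchange machinery (Lemma \ref{lem:fi}, Corollary \ref{cor:fi-cond}) with a carefully chosen auxiliary $\kappa$-flow, and then passing to the limit $\tau_k\downarrow0$ using the strong $L^m$-compactness from Proposition \ref{prop:compact} together with the $H^1$-regularity from Corollary \ref{cor:regularity}. Concretely, I would work in the $x$-variable with equation \eqref{eq:x-equation} and the functional $\poten$, translate the desired Kru\v{z}kov-type inequality for $u_*$ into the corresponding statement for $\rho_*$ through the coordinate change of Proposition \ref{prop:change_var}, and aim to prove: for every nonnegative $\varphi$ and every level $k$, a discrete inequality of the schematic form
\begin{align*}
  \frac1\tau\intX \big(\eta_k(\rho_\tau^{n-1}) - \eta_k(\rho_\tau^n)\big)\phi\dd x
  \ge \text{(flux terms evaluated at }\rho_\tau^n\text{)} + \text{(dissipation term at }\rho_\tau^n\text{)},
\end{align*}
where $\eta_k$ is a (mollified) entropy approximating $|\,\cdot\,-k|$ adapted to the nonlinearity $\rho^{m}$, summing telescopically against a time-discretized test function, and sending $\tau\to0$.

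The key technical step — and the place where the variational structure really enters — is the choice of the auxiliary flow used for the variation of the minimizer $\rho_\tau^n$. Following the pattern of Lemma \ref{lem:bound1} (where the heat flow $\flow_\ent$ was used), I would test the minimality of $\rho_\tau^n$ against a flow $\flow_\Psi$ whose generator is tuned so that computing $-\frac{\dd}{\dd s}\big|_{s=0}\poten(\flow_\Psi^s\rho_\tau^n)$ produces exactly the combination of flux terms and the nonnegative dissipation $\int \sgn_\e'((\rho_\tau^n)^?-k^?)[\ldots]^2$ appearing on the right of \eqref{eq:entropyformulation}. The natural candidate is a flow driven by the mollified Kru\v{z}kov entropy: take $\Psi(\rho) = \int_\setR \Phi_{k,\e}(x,\rho(x))\dd x$ where $\Phi_{k,\e}$ is built from $\abs_\e$ (or $\sgn_\e$) composed with $a(x)\rho^{m-1} - a(x)k^{m-1}$, check via Lemma \ref{lem:z} (conditions \eqref{eq:elliptic2} and the joint convexity \eqref{eq:joint}) that this $\Psi$ admits a $\kappa_\e$-flow, and then apply Corollary \ref{cor:fi-cond}. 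The flow-interchange estimate \eqref{eq:fi-1} then yields the discrete entropy inequality with the dissipation term coming for free from the $\limsup$ of difference quotients. One must be careful that the regularity hypotheses of Corollary \ref{cor:fi-cond} (the curve $s\mapsto\flow_\Psi^s\rho_\tau^n$ lying in $L^m$, differentiable for $s>0$, continuous at $s=0$) are met, which is why the mollified version with parameter $\e$ is needed before letting $\e\downarrow0$.

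For the passage to the limit, I would proceed in two stages: first fix $\e>0$ and send $\tau_k\to0$. On the left, the piecewise-constant-in-time interpolants converge strongly in $L^m(]0,T[\times\setR)$ by Proposition \ref{prop:compact}, so the entropy terms $\abs_\e((\rho_\tau)^{m-1}-k^{m-1})$-type quantities converge; the discrete time-difference quotient of $\varphi$ converges to $\varphi_t$; the flux terms pass to the limit because $\rho_*^{m/2}\in L^2(0,T;H^1)$ (Corollary \ref{cor:regularity}) gives weak convergence of $\partial_x(\rho_\tau^m)$, paired with strong convergence of the bounded mollified coefficients; and the dissipation term, being a $\limsup$ of nonnegative quantities, survives the limit by lower semicontinuity (weak $L^2$ lower semicontinuity of the $\int \sgn_\e'(\cdots)|\partial_x(\cdots)|^2$ functional in the appropriate variables). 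This produces \eqref{eq:entropyformulation} at mollification level $\e$. Then send $\e\downarrow0$: using that $\sgn_\e\to\sgn$ pointwise and boundedly, $\abs_\e\to\abs$ uniformly, and that the residual terms (of the type $R_\e$ in the formal computation in Subsection \ref{subsec:entropy}) vanish uniformly as $\e\to0$, exactly as in the viscous-approximation heuristic already written out in the paper; the dissipation term becomes the $\limsup_{\e\downarrow0}$ expression in \eqref{eq:entropyformulation}. Finally I would translate back to $u_* = \rescale^{-1}[\rho_*]$ via the change of variables of Proposition \ref{prop:change_var}, checking that the flux, convection and source terms assemble into precisely the right-hand side of \eqref{eq:entropyformulation}. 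The main obstacle I anticipate is the first stage — verifying that the right auxiliary functional $\Psi$ genuinely falls under the scope of Lemma \ref{lem:z} for every $k$ and $\e$ (the joint convexity condition \eqref{eq:joint} is delicate because $a$ is not convex, cf.\ the discussion after Lemma \ref{lem:z}), and that the $s$-derivative of $\poten$ along this flow reproduces \emph{exactly} the Kru\v{z}kov flux structure rather than merely a comparable quantity; getting the algebra to close is where the degeneracy $m>1$ and the specific form \eqref{eq:poten} of $\poten$ must be used in an essential way.
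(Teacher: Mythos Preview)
Your overall architecture matches the paper's proof: flow interchange with a mollified Kru\v{z}kov-type auxiliary functional, discrete entropy inequality, telescopic sum against $\theta(n\tau)$, then $\tau\to0$ using the $L^m$-compactness, then $\eps\to0$. However, the obstacle you flag at the end is not merely delicate --- it is fatal for the functional you propose. The pure mollified entropy $\Psi_\eps(\eta)=\int \Phi_{k,\eps}(x,\eta)\dd x$ does \emph{not} satisfy the hypotheses of Lemma \ref{lem:z}: condition \eqref{eq:elliptic2} demands a uniform lower bound $\eta F_{\eta\eta}\ge c>0$, but here $F_{\eta\eta}$ is proportional to $S_\eps''(s)=2ms^{m-1}\delta_\eps(s^m-k^m)$, which is supported only in a thin neighborhood of the level set and vanishes elsewhere. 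Correspondingly $H_{\xi\xi}$ can be zero while $H_{x\xi}$ is not, so no choice of $\kappa$ makes \eqref{eq:convex} positive semi-definite, and you get no $\kappa$-flow.

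The missing idea is a second regularization: the paper takes
\[
  \Psi_{\eps,\nu}(\eta)=\intX S_\eps\Big(\frac{\eta}{\trf'}\Big)\,\phi\circ\trf\;\trf'\dd x + \nu\,\ent(\eta),
\]
adding a viscous term $\nu\ent$. This contributes $\nu\xi^{-2}$ to $H_{\xi\xi}$, and together with the bounds $H_{xx}\ge -M_\eps$ and $|H_{x\xi}|\le M_\eps\xi^{-1}$ (which follow from \eqref{eq:sproperties}) one can now choose $\kappa=\kappa_{\eps,\nu}$ so that \eqref{eq:convex} holds. The associated $\kappa$-flow solves \eqref{eq:dumb002}; computing $-\frac{\dd}{\dd s}\poten(\flow^s_{\eps,\nu}\eta_0)$ in the $y$-variable $v=\rescale^{-1}[\eta]$ produces exactly the Kru\v{z}kov flux and dissipation terms plus a controlled $\nu\Kbound_\ent$ remainder. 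The order of limits is then $\tau\to0$, then $\nu\to0$ (using the two-sided bound on $\ent(\rho_*(t))$ from Lemma \ref{lem:boundent} to kill $\nu\ent$), and only then $\eps\to0$. Note also that the paper builds $\Psi_{\eps,\nu}$ directly in the $y$-coordinate through the scaling $\eta/\trf'$, with $S_\eps(s)=\int_0^s\sgn_\eps(r^m-k^m)\dd r$; this makes the algebra close on the nose rather than requiring a translation at the end.
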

\begin{proof}
  In view of Remark \ref{rmk:tensor}, it suffices to prove the estimate \eqref{eq:entropyformulation}
   for all test functions $\varphi$ of the form $\varphi(t,y)=\theta(t)\phi(y)$,
  with arbitrary non-negative $\theta\in C^\infty_c(\setR_+)$ and $\phi\in C^\infty_c(\setR)$.
  Let $\phi$ and $\theta$ as well as $k>0$ be fixed in the following.

  The proof of the entropy inequality results from another application of Lemma \ref{lem:fi}.
  For given parameters $\nu>0$ and $0<\eps<k$, define
  \begin{align}
    \label{eq:1}
    \Psi_{\eps,\nu}(\eta) = \int_\setR S_\epsilon\Big(\frac{\eta(x)}{\trf'(x)}\Big)\phi\circ \trf(x)\trf'(x)\dd x + \nu\ent(\eta),
  \end{align}
  where the function $S_\eps:\setR\to\setR$ is given by
  \begin{align*}
    S_\eps(s) := \int_0^s\sgn_\eps(r^m-k^m)\dd r.
  \end{align*}
  From its definition, it is obvious that $S_\eps(s)$ is bounded from below by $-k$
  and converges monotonically (from above) and uniformly in $s\in\setR$ to $|s-k|-k$ in the limit $\eps\downarrow0$.
  The first and second derivatives are given by
  \begin{align*}
    S_\eps'(s) = \sgn_\eps(s^m-k^m), \quad S_\eps''(s) = 2ms^{m-1}\delta_\eps(s^m-k^m).
  \end{align*}
  For later reference, we observe that there is a constant $K_\eps$ such that
  \begin{align}
    \label{eq:sproperties}
    |S_\eps(s)|\le s, \quad |S_\eps'(s)| \le 1, \quad 0\le S_\eps''(s)\le K_\eps s^{-1} \quad \text{for all $s\in\setR_+$},
  \end{align}
  which can be verified by elementary calculations, using the definitions of $\delta_\eps$ and $\sgn_\eps$.

  As a preliminary step, we show that there is a $\kappa$-flow associated to $\Psi_{\eps,\nu}$,
  which is given as the solution operator $\flow_{\eps,\nu}$ to
  \begin{align}
    \label{eq:dumb002}
    \partial_s\eta = \Big( \eta \Big[\sgn_\eps\bigg(\Big(\frac{\eta}{\trf'}\Big)^m-k^m\bigg)\phi\circ \trf\Big]_x\Big)_x + \nu\eta_{xx}.
  \end{align}
  This will be achieved by application of Lemma \ref{lem:z}.
  In the situation at hand, we have
  \begin{align*}
    F(x,\eta) = S_\eps\Big(\frac{\eta}{\trf'(x)}\Big)\phi\circ \trf(x)\trf'(x) + \nu\eta\log\eta,
  \end{align*}
  and it is easily seen that \eqref{eq:elliptic2} is satisfied (with $c=\nu>0$).
  The associated function $H$ reads
  \begin{align*}
    H(x,\xi) = \xi S_\eps\Big(\frac1{\xi \trf'(x)}\Big)\phi\circ \trf(x)\trf'(x) - \nu\log\xi.
  \end{align*}
  We calculate the entries of the matrix in \eqref{eq:convex}:
  \begin{align*}
    H_{\xi\xi}(x,\xi) &= \xi^{-3}S_\eps''\Big(\frac1{\xi \trf'(x)}\Big)\frac{\phi\circ \trf(x)}{\trf'(x)} + \nu\xi^{-2}, \\
    H_{xx}(x,\xi) &=  \xi S_\eps\Big(\frac1{\xi \trf'(x)}\Big)f_1(x) + S_\eps'\Big(\frac1{\xi \trf'(x)}\Big)f_2(x) + \xi^{-1}S_\eps''\Big(\frac1{\xi \trf'(x)}\Big)f_3(x), \\
    H_{x\xi}(x,\xi) &= S_\eps\Big(\frac1{\xi \trf'(x)}\Big)f_4(x) + \xi^{-1}S_\eps'\Big(\frac1{\xi \trf'(x)}\Big)f_5(x) + \xi^{-2}S_\eps''\Big(\frac1{\xi \trf'(x)}\Big)f_6(x).
  \end{align*}
  Here $f_1$ to $f_6$ are continuous functions of compact support in $\setR$, explicitly expressible in terms of $\trf$ and its derivatives.
  Using the properties \eqref{eq:sproperties}, it is easily seen that there is a constant $M_\eps$ such that
  \begin{align*}
    H_{\xi\xi}(x,\xi) \ge \nu\xi^{-2}, \quad
    H_{xx}(x,\xi) \ge -M_\eps, \quad
    |H_{x\xi}(x,\xi)| \le M_\eps \xi^{-1}
  \end{align*}
  holds for all $\xi\in\setR_+$ and uniformly in $x\in\setR$.
  Thus, for a suitable choice of $\kappa=\kappa_{\eps,\nu}$,
  the matrix in \eqref{eq:convex} is positive for every $\xi\in\setR_+$ and $x\in\setR$,
  and so $H$ is jointly convex.
  By Lemma \ref{lem:z}, the solution operator $\flow_{\eps,\nu}$ for \eqref{eq:dumb002} is a $\kappa_{\eps,\nu}$-flow for $\Psi_{\eps,\nu}$.

  In order to apply Corollary \ref{cor:fi-cond}, we need to calculate the derivative of $\poten$ along solutions $\eta$ to \eqref{eq:dumb002}.
  For simplification, introduce the rescaling $v(t)=\rescale^{-1}[\eta(t)]$, see \eqref{eq:defscaling}.
  Further, recall the definitions of $\trf$ and of $\alpha=\trf'\circ \trf^{-1}$,
  and properties \eqref{eq:defT} and \eqref{eq:bfroma}.
  Using the functional $\Kbound_\ent$ defined in \eqref{eq:coercive},
  we have
  \begin{align*}
    &\frac{\dd}{\dd s}\poten(\flow_{\eps,\nu}^s\eta_0)
    = - \intX \eta\big[a\eta^{m-1}\big]_x \big[\sgn_\eps\big((\eta/\trf')^m-k^m\big)\phi\circ \trf\big]_x\dd x
    + \nu\intX a\eta^{m-1}\eta_{xx} \\
    &\le - \intX \{\alpha v\}\circ \trf \big[\{(a\circ \trf^{-1})\cdot(\alpha v)^{m-1}\}\circ \trf\big]_x \big[\{\sgn_\eps\big(v^m-k^m\big)\phi\}\circ \trf\big]_x\dd x
    - \nu\Kbound_\ent(\eta) \\
    &\stackrel{\eqref{eq:defT}}{=} - \intX \alpha^2v\Big[\frac{m\alpha^{-2}}{m-1}v^{m-1}\Big]_y\big[\sgn_\eps(v^m-k^m)\phi\big]_y\dd y  - \nu\Kbound_\ent(\eta) \\
    &\stackrel{\eqref{eq:bfroma}}{=} - \intX \big[ (v^m)_y + bv^m\big][\sgn_\eps(v^m-k^m)\phi]_y\dd y  - \nu\Kbound_\ent(\eta) \\
    &= - \intX \big[(v^m)_y+b(v^m-k^m)\big]\sgn_\eps(v^m-k^m)\phi_y\dd y - \intX k^mb\big[\sgn_\eps(v^m-k^m)\phi\big]_y\dd y \\
    & \qquad -\intX \big[(v^m)_y+b(v^m-k^m)\big](v^m)_y\sgn_\eps'(v^m-k^m)\phi\dd y -\nu\Kbound_\ent(\eta) \\
    & = - \intX \phi_{yy} S_\eps(v)\dd y + \intX \big(b(v^m-k^m)\phi_y-k^mb_y\phi\big) \sgn_\eps(v^m-k^m)\dd y \\
    & \qquad  -\intX \big[P_\eps(v)_y\big]^2\phi\dd y - \intX b Q_\eps(v)_y\phi\dd y - \nu\Kbound_\ent(\eta).
    %
  \end{align*}
  In the last step, we have implicitly defined the smooth functions $P_\eps,Q_\eps:\setR_+\to\setR$ such that
  \begin{align*}
    P_\eps'(s)^2 = \sgn_\eps'(s^m-k^m) \quad \text{and} \quad Q_\eps'(s) = (s^m-k^m)\sgn_\eps'(s^m-k^m).
  \end{align*}
  Accordingly, we define, still with $v=\rescale^{-1}[\eta]$,
  \begin{align*}
    \Kbound_{\eps,\nu}(\eta)
    &= - \intX \phi_{yy} S_\eps(v)\dd y
    + \intX \big(b(v^m-k^m)\phi_y-k^mb_y\phi\big) \sgn_\eps(v^m-k^m)\dd y \\
    & \qquad  +\intX \big[P_\eps(v)_y\big]^2\phi\dd y - \intX (b\phi)_y Q_\eps(v)\dd y + \nu\Kbound_\ent(\eta).
  \end{align*}
  Considered as a functional of $v$, the right-hand side is lower semi-continuous with respect to strong convergence of $v$ in $L^m(\setR)$.
  Indeed, all of the integral expressions are even continuous in $L^m(\setR)$,
  except for the one involving $P_\eps$, for which lower semi-continuity can be concluded by means of Lemma \ref{lem:Hlsc}.
  Since convergence of $\eta$ in $L^m(\setR)$ is equivalent to convergence of $v=\rescale^{-1}[\eta]$ in $L^m(\setR)$,
  the functional $\Kbound_{\eps,\nu}$ is lower-semicontinuous with respect to $\eta$.
  Now since $\flow_{\eps,\nu}^s\eta_0$ converges to $\eta_0$ in $L^m(\setR)$ for every $\eta_0\in L^m(\setR)$,
  we conclude that $\Kbound_{\eps,\nu}$ satisfies the condition \eqref{eq:fi-cond} for the application of Corollary \ref{cor:fi-cond}.

  We shall now derive a refined version of estimate \eqref{eq:fi-2}.
  To this end, recall that $\theta\in C^\infty_c(\setR_+)$ is a temporal test function.
  Multiply \eqref{eq:fi-1} by $\theta(n\tau)$ and sum over $n$ to find
  \begin{align*}
    \tau\sum_{n=1}^\infty \Psi_{\eps,\nu}(\rho_\tau^n)\frac{\theta(n\tau)-\theta((n+1)\tau)}{\tau}
    \ge \tau \sum_{n=1}^\infty \Kbound_{\eps,\nu}(\rho_\tau^n)\theta(n\tau) + \kappa_{\eps,\nu}\tau\poten(\rho^0).
  \end{align*}
  Thanks to the strong convergence of $\bar\rho_\tau$ to $\rho_*$ in $L^m(\setR)$ and the lower semi-continuity of $\Kbound_{\eps,\nu}$,
  we can pass to the time-continuous limit and find
  \begin{align*}
    \intT \Psi_{\eps,\nu}(\rho_*(t))\theta'(t)\dd t \ge \intT \theta(t)\Kbound_{\eps,\nu}(\rho_*(t))\dd t .
  \end{align*}
  Taking into account estimate \eqref{eq:boundent} on $\ent(\rho_*(t))$,
  we can now pass to the inviscid limit $\nu\downarrow0$ and find, recalling $\varphi(t,y)=\theta(t)\phi(y)$,
  that
  \begin{align*}
    \intT\intX S_\eps(u_*-k^m)\partial_t\varphi\dd y\dd t
    \ge &\intT\intX \Big(\big[(u_*^m)_y+b(u_*^m-k^m)\big]\varphi_y-k^mb_y\varphi\Big) \sgn_\eps(u_*^m-k^m)\dd y\dd t \\
    & \quad  + \intT\intX \big[P_\eps(u_*)_y\big]^2\varphi\dd y\dd t - \intT\intX (b\varphi)_y Q_\eps(u_*)\dd y\dd t.
  \end{align*}
  Notice the integration by parts in the first term,
  which is admissible since $\partial_x(\rho_*^m)\in L^1(0,T;L^1(\setR))$ by Corollary \ref{cor:regularity}.
  In the final step, we pass to the limit $\eps\downarrow0$.
  By the uniform convergence of $S_\eps(s)$ to $|s-k|-k$, by the uniform convergence of $Q_\eps$ to zero,
  and since
  \begin{align*}
    \big[P_\eps(u_*)_y\big]^2 = \sgn_\eps'(u_*^m-k^m)\big[(u_*^m)_y\big]^2
  \end{align*}
  by definition of $P_\eps$, we finally obtain \eqref{eq:entropyformulation}.
\end{proof}

\subsection{$L^\infty$ bounds}
Definition \ref{def:entropy_sol_1} of entropy solutions requires $u_*\in L^1\cap L^\infty(]0,T[\times\setR)$ for every $T>0$.
The $L^1$-bound is obvious from our construction.
Below, we prove the $L^\infty$-bound.
\begin{prop}
  \label{prop:Linfty}
  Assume that $\rho^0\in\dens\cap L^\infty(\setR)$, and let $k>0$ be such that $\rho^0(x)\le k a(x)^{-1/(m-1)}$ for almost all $x\in\setR$.
  Then $\rho_*(T,x)\le k a(x)^{-1/(m-1)}$ for all $T>0$ and almost every $x\in\setR$.
\end{prop}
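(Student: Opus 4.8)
The plan is to apply the flow–interchange machinery of Lemma~\ref{lem:fi} and Corollary~\ref{cor:fi-cond} once more, this time with an auxiliary functional built around the barrier profile $\bar\rho_k(x):=k\,a(x)^{-1/(m-1)}$. By (a1)\&(a2), $\bar\rho_k$ is bounded and bounded away from zero, and it is the stationary density of \eqref{eq:x-equation} at ``pressure'' $k^{m-1}$, in the sense that $a(x)\bar\rho_k(x)^{m-1}\equiv k^{m-1}$; in particular $\rho\le\bar\rho_k$ a.e.\ is equivalent to $a\rho^{m-1}\le k^{m-1}$ a.e., and the hypothesis on $\rho^0$ reads $\rho^0\le\bar\rho_k$. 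Fix once and for all a convex function $\beta\in C^2(\setR)$ with $\beta\equiv0$ on $(-\infty,1]$, $\beta>0$ on $(1,\infty)$, $\sup_{s>0}s\beta''(s)<\infty$ and $\beta(s)\le C(1+s^m)$ (for instance $\beta''(s)=\min\{(s-1)_+,1/s\}$); the vanishing of $\beta$ on $(-\infty,1]$ is essential because $\bar\rho_k\notin L^1(\setR)$. For $\nu>0$ set
\[
  \Psi_\nu(\eta):=\int_\setR\beta\!\Big(\frac{\eta(x)}{\bar\rho_k(x)}\Big)\bar\rho_k(x)\dd x+\nu\,\ent(\eta).
\]
Since $\beta(0)=0$ and $\beta$ has at most $L^m$-growth, $\Psi_\nu$ is lower semicontinuous on $\dens$, and $\Psi_\nu(\rho^0)=\nu\,\ent(\rho^0)<\infty$ because the first integral vanishes.

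First I would verify, via Lemma~\ref{lem:z}, that $\Psi_\nu$ admits a $\kappa_\nu$-flow $\flow_{\Psi_\nu}$, realized by the solution operator of the uniformly parabolic equation $\partial_s\eta=\partial_x\big(\eta\,\partial_x\beta'(\eta/\bar\rho_k)\big)+\nu\,\eta_{xx}$. Here $F(x,\eta)=\bar\rho_k(x)\beta(\eta/\bar\rho_k(x))+\nu\eta\log\eta$, so $\eta F_{\eta\eta}=w\beta''(w)+\nu$ with $w:=\eta/\bar\rho_k$: the viscous term gives the lower bound $c=\nu$ in \eqref{eq:elliptic2}, while the upper bounds on $\eta F_{\eta\eta}$ and on $F_{x\eta}$ follow from $\sup s\beta''(s)<\infty$ together with $\bar\rho_k\in W^{2,\infty}$ and $|\bar\rho_k'/\bar\rho_k|=|a'|/((m-1)a)\in L^\infty$. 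The joint convexity \eqref{eq:joint} of $H(x,\xi)-\tfrac{\kappa_\nu}{2}x^2$ is checked as in Proposition~\ref{prop:karlsen}: the growth conditions on $\beta$ keep $H_{\xi\xi}$, $H_{xx}$ and $\xi H_{x\xi}$ away from the degeneracies $\xi\to0,\infty$, so that $\dff^2H-\operatorname{diag}(\kappa_\nu,0)$ is positive semidefinite once $\kappa_\nu$ is chosen negative enough (of order $-1/\nu$) — this is the single place for which the viscous regularization is indispensable. I would also remark that Lemma~\ref{lem:z} is stated for $a\in C^2$; for $a\in W^{2,\infty}$ one either mollifies $a$, runs the argument and passes the resulting pointwise barrier estimate to the limit, or observes that the proof of Lemma~\ref{lem:z} only uses $L^\infty$-bounds on the derivatives of $a$.

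The key computation is the dissipation of $\poten$ along $\flow_{\Psi_\nu}$. Since $a=k^{m-1}\bar\rho_k^{-(m-1)}$, one has $a\rho^{m-1}=k^{m-1}w^{m-1}$ and $\poten(\rho)=\tfrac{k^{m-1}}{m}\int_\setR\bar\rho_k w^m\dd x$ with $w=\rho/\bar\rho_k$, and a short integration by parts gives
\[
  \frac{\dd}{\dd s}\poten\big(\flow_{\Psi_\nu}^s\eta_0\big)\Big|_{s=0}
  =-k^{m-1}(m-1)\int_\setR\bar\rho_k\,w^{m-1}\beta''(w)\,(\partial_x w)^2\dd x
   +\nu\int_\setR a\,\eta_0^{m-1}(\eta_0)_{xx}\dd x.
\]
The first term is $\le0$ because $\beta$ is convex — this is precisely the barrier property — and the viscous term is $\le-\nu\Kbound_\ent(\eta_0)$ by the computation \eqref{eq:diss_entropy}. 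Hence $\liminf_{s\downarrow0}\big(-\tfrac{\dd}{\dd\sigma}\big|_{\sigma=s}\poten(\flow_{\Psi_\nu}^\sigma\rho_\tau^n)\big)\ge\nu\Kbound_\ent(\rho_\tau^n)$, and Corollary~\ref{cor:fi-cond} applies with $\Kbound:=\nu\Kbound_\ent$ (lower semicontinuous by Lemma~\ref{lem:Hlsc}); the flow has the required $L^m$-regularity because the equation above is uniformly parabolic, as in the proof of Lemma~\ref{lem:z}. Using estimate \eqref{eq:fi-2}, $\Psi_\nu(\rho^0)=\nu\ent(\rho^0)$, the crude bound $-\Kbound_\ent(\rho_\tau^n)\le\tfrac{\maxaxx}{\mina}\poten(\rho^0)$ that follows from \eqref{eq:Lmbound}, the two-sided bound \eqref{eq:boundent} on $\ent(\bar\rho_\tau(T))$, and $N\tau\le T+\tau$ for the step $N$ with $(N-1)\tau<T\le N\tau$, one obtains
\[
  \int_\setR\beta\!\Big(\frac{\bar\rho_\tau(T,x)}{\bar\rho_k(x)}\Big)\bar\rho_k(x)\dd x
  \le\nu\,C(\rho^0,T)+\tau\,\max(0,-\kappa_\nu)\,\poten(\rho^0),
\]
with $C(\rho^0,T)$ independent of $\nu$ and $\tau$.

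Finally I would pass to the limit in two stages. Along the fixed sequence $\tau_k\downarrow0$ one has $\bar\rho_{\tau_k}(T)\to\rho_*(T)$ in $\wass$, hence narrowly; the functional $\rho\mapsto\int_\setR\beta(\rho/\bar\rho_k)\bar\rho_k\dd x$ is nonnegative with convex, superlinearly growing integrand, hence lower semicontinuous along this convergence, while $\tau_k\max(0,-\kappa_\nu)\poten(\rho^0)\to0$. Therefore $\int_\setR\beta(\rho_*(T,x)/\bar\rho_k(x))\bar\rho_k(x)\dd x\le\nu\,C(\rho^0,T)$ for every $\nu>0$; letting $\nu\downarrow0$ forces this integral to vanish, so $\beta(\rho_*(T,x)/\bar\rho_k(x))=0$ for a.e.\ $x$, i.e.\ $\rho_*(T,x)\le\bar\rho_k(x)=k\,a(x)^{-1/(m-1)}$. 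The genuinely delicate points are the verification of the joint-convexity hypothesis of Lemma~\ref{lem:z} for $\Psi_\nu$ (compounded by the $W^{2,\infty}$ rather than $C^2$ regularity of $a$) and making the lower-semicontinuity step in the limit $\tau_k\to0$ legitimate for the weighted barrier functional; everything else is a direct transcription of the arguments already used for Lemma~\ref{lem:bound1} and Proposition~\ref{prop:karlsen}.
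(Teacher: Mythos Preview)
Your proof is correct and follows essentially the same strategy as the paper's: flow interchange against a viscous-regularized ``excess over the barrier $\bar\rho_k$'' functional, verification of the $\kappa$-flow via Lemma~\ref{lem:z}, the key observation that $a\rho^{m-1}=k^{m-1}(\rho/\bar\rho_k)^{m-1}$ depends only on $w=\rho/\bar\rho_k$ so that the principal dissipation term is nonnegative, and the cascade of limits $\tau\to0$ then $\nu\to0$ using lower semicontinuity in $\wass$. The only cosmetic difference is that the paper takes $\heav_\eps\big(a^{1/(m-1)}\eta-k\big)a^{-1/(m-1)}$ as integrand (corresponding to your choice $\beta(s)=k^{-1}\heav_\eps(k(s-1))$) and therefore carries an additional $\eps\to0$ limit at the very end, whereas you fix a single $C^2$ convex $\beta$ from the start.
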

\begin{proof}
  Once again, the proof is obtained in application of Lemma \ref{lem:fi}.
  As auxiliary functional, we choose
  \begin{align*}
    \Psi_{\eps,\nu}(\eta) = \intX \heav_\eps\big( a(x)^{\frac1{m-1}}\eta(x)-k\big)a(x)^{\frac{-1}{m-1}}\dd x + \nu \ent(\eta)
  \end{align*}
  with $\nu>0$ and $\eps\in]0,k[$.
  We verify that $\Psi_{\eps,\nu}$ satisfies the assumptions required in Lemma \ref{lem:z}.
  With the short hand notation $A=a^{1/(m-1)}$, we have
  \begin{align*}
    H(x,\xi) = \xi\heav_\eps\Big( \frac{A(x)}\xi-k\Big)A(x)^{-1} - \nu\log\xi.
  \end{align*}
  Recall that $\heav_\eps'=\stp_\eps$ and $\heav_\eps''=\delta_\eps$,
  and observe that there is some $K_{k,\eps}$ such that
  \begin{align}
    \label{eq:somebounds}
    0\le\heav_\eps(s-k)\le s, \quad
    0\le\stp_\eps(s-k)\le1, \quad
    0\le\delta_\eps(s-k)\le K_{k,\eps} s^{-1}
  \end{align}
  for all $s\in\setR_+$.
  We obtain for the second derivatives of $H$:
  \begin{align*}
    H_{\xi\xi}(x,\xi) &= \frac{A(x)}{\xi^{3}}\delta_\eps\Big( \frac{A(x)}\xi-k\Big) + \nu\xi^{-2}, \\
    H_{xx}(x,\xi) &=  f_1(x)\xi\heav_\eps\Big( \frac{A(x)}\xi-k\Big) + f_2(x)\stp_\eps\Big( \frac{A(x)}\xi-k\Big) + \frac{f_3(x)}{\xi}\delta_\eps\Big( \frac{A(x)}\xi-k\Big), \\
    H_{x\xi}(x,\xi) &= f_4(x)\heav_\eps\Big( \frac{A(x)}\xi-k\Big) + \frac{f_5(x)}{\xi}\stp_\eps\Big( \frac{A(x)}\xi-k\Big) + \frac{f_6(x)}{\xi^{2}}\delta_\eps\Big( \frac{A(x)}\xi-k\Big),
  \end{align*}
  where the functions $f_1$ to $f_6$ are explicitly expressible in terms of $A$, $A'$ and $A''$, and are uniformly bounded in $x\in\setR$.
  Taking into account \eqref{eq:somebounds} and that $A\ge \mina^{1/(m-1)}$,
  we arrive at the uniform bounds
  \begin{align*}
    H_{\xi\xi}(x,\xi) \ge \nu\xi^{-2}, \quad
    H_{xx}(x,\xi) \ge -M_\eps, \quad
    |H_{x\xi}(x,\xi)| \le M_\eps \xi^{-1}
  \end{align*}
  with a suitable constant $M_\eps$.
  Consequently, by application of Lemma \ref{lem:z}, the solution operator $\flow_{\Psi}$ to
  \begin{align*}
    \partial_s \eta = \dff_x\big(\eta\dff_x\big[\stp_\eps(a^{\frac1{m-1}}\eta-k)\big]\big) + \nu\eta_{xx}
  \end{align*}
  defines a $\kappa$-flow for $\Psi_{\eps,\nu}$.
  The regularizing effect of the viscous term is strong enough to justify the following calculations:
  \begin{align*}
    \frac{\dd}{\dd s}\poten(\flow_\Psi^s\eta_0)
    &= \intX a\eta^{m-1}\big(\eta\big[\stp_\eps(a^{\frac1{m-1}}\eta-k)\big]_x\big)_x\dd x + \nu\intX a\eta^{m-1}\eta_{xx}\dd x \\
    &\le -\intX \eta \big[ (a^{\frac1{m-1}}\eta)^{m-1} \big]_x\big[\stp_\eps(a^{\frac1{m-1}}\eta-k)\big]_x\dd x
    - \frac{\nu\maxaxx}{m} \intX \eta^m\dd x.
  \end{align*}
  Here we have used that the second integral with $\nu$ can be estimated as in \eqref{eq:diss_entropy},
  neglecting a positive term.
  The product under the first integral is always non-negative
  since both the functions $s\mapsto s^{m-1}$ and $s\mapsto\stp_\epsilon(s-k)$ are differentiable and increasing.
  Thus, condition \eqref{eq:fi-cond} is satisfied with
  \begin{align*}
    \Kbound(\eta) := - \frac{\nu\maxaxx}{m} \intX \eta^m\dd x,
  \end{align*}
  which is obviously continuous with respect to strong convergence in $L^m(\setR)$.
  Estimate \eqref{eq:fi-2} in combination with \eqref{eq:Lmbound} yields
  \begin{align*}
    \Psi_\nu(\rho_\tau^N)
    &\le \Psi_\nu(\rho^0) + \frac{\nu\maxaxx}{m}\tau\sum_{n=1}^N\intX(\rho_\tau^n)^m\dd x + 2\tau(-\kappa)\poten(\rho^0) \\
    & \le \Psi_\nu(\rho^0) + \Big(\frac{\nu\maxaxx}{m\mina}N\tau + 2\tau(-\kappa)\Big)\poten(\rho^0).
  \end{align*}
  For fixed positive parameters $\nu$ and $\eps$, the modulus $\kappa$ of convexity is a $\tau$-independent constant.
  We can thus pass to the limit $\tau_k\downarrow0$ in \eqref{eq:fi-2} and obtain,
  using the lower semi-continuity of $\Psi_{\eps,\nu}$ in $\wass$,
  that
  \begin{align*}
    \Psi_\nu(\rho_*(T)) \le \Psi_\nu(\rho^0) + \frac{\nu\maxaxx}{m\mina}T\poten(\rho^0)
  \end{align*}
  for every $T\ge0$.
  Using further that $\ent(\rho_*(T))$ is a finite quantity, see \eqref{eq:boundent},
  we can pass to the limit $\nu\downarrow0$ and obtain
  \begin{align*}
    \intX \heav_\eps\big(a(x)^{\frac1{m-1}}\rho_*(T,x)-k\big)a(x)^{\frac{-1}{m-1}}\dd x
    \le \intX \heav_\eps\big(a(x)^{\frac1{m-1}}\rho^0(x)-k\big)a(x)^{\frac{-1}{m-1}}\dd x.
  \end{align*}
  By the properties of $a$, and since $\rho^0,\rho_*(t)\in L^m(\setR)$,
  we can further pass to the limit $\eps\downarrow0$,
  which yields
  \begin{align*}
    \intX \big[\rho_*(T,x)-ka(x)^{\frac{-1}{m-1}}\big]_+\dd x
    \le \intX \big[\rho^0(x)-ka(x)^{\frac{-1}{m-1}}\big]_+\dd x.
  \end{align*}
  Since the integral on the right-hand side is zero by hypothesis, so the is the integral on the left-hand side.
  This proves the claim.
\end{proof}
\begin{cor}
  \label{cor:h1}
  Provided that $\rho^0\in L^\infty(\setR)$, it follows that $\rho_*^m\in L^2(0,T;H^1(\setR))$.
\end{cor}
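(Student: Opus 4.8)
The plan is to upgrade the regularity $\rho_*^{m/2}\in L^2(0,T;H^1(\setR))$ obtained in Corollary \ref{cor:regularity} to $\rho_*^m\in L^2(0,T;H^1(\setR))$ by exploiting a uniform $L^\infty$-bound on $\rho_*$, which in turn comes from Proposition \ref{prop:Linfty}.

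First I would record the $L^\infty$-bound. Since $\rho^0\in\dens\cap L^\infty(\setR)$ and $a\in L^\infty(\setR)$ by property (a2), the product $\rho^0\,a^{1/(m-1)}$ lies in $L^\infty(\setR)$, so there is a finite $k>0$ with $\rho^0(x)\le k\,a(x)^{-1/(m-1)}$ for a.e.\ $x\in\setR$. Proposition \ref{prop:Linfty} then gives $\rho_*(T,x)\le k\,a(x)^{-1/(m-1)}$ for every $T>0$ and a.e.\ $x$; together with $a\ge\mina>0$ this yields $\rho_*(t,x)\le L$ uniformly in $t$ and $x$, with $L:=k\,\mina^{-1/(m-1)}$.

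Next I would apply the chain rule. Write $w:=\rho_*^{m/2}$; for a.e.\ $t\in(0,T)$ we have $w(t)\in H^1(\setR)$ by Corollary \ref{cor:regularity} and $0\le w(t)\le L^{m/2}$ by the previous step. The elementary fact that $w^2\in H^1(\setR)$ whenever $w\in H^1(\setR)\cap L^\infty(\setR)$, with $\partial_x(w^2)=2w\,\partial_x w$, then gives $\rho_*(t)^m=w(t)^2\in H^1(\setR)$ and $\partial_x(\rho_*^m)=2\rho_*^{m/2}\,\partial_x(\rho_*^{m/2})$. Hence
\[
\|\partial_x(\rho_*^m)(t)\|_{L^2}\le 2L^{m/2}\,\|\partial_x(\rho_*^{m/2})(t)\|_{L^2},
\]
while $\|\rho_*^m(t)\|_{L^2}\le L^{m/2}\,\|\rho_*^{m/2}(t)\|_{L^2}$, the latter being finite because $\rho_*(t)\in L^1(\setR)\cap L^\infty(\setR)$ forces $\rho_*(t)^m\in L^1(\setR)\cap L^\infty(\setR)\subset L^2(\setR)$. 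Squaring these inequalities and integrating over $t\in(0,T)$, the right-hand sides are bounded by a constant times $\|\rho_*^{m/2}\|_{L^2(0,T;H^1(\setR))}^2<\infty$, which is precisely the conclusion $\rho_*^m\in L^2(0,T;H^1(\setR))$.

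The only nontrivial point is the chain-rule step $w\in H^1\cap L^\infty\Rightarrow w^2\in H^1$ with the stated weak derivative; this is classical (it follows from the Leibniz rule for products of $H^1\cap L^\infty$ functions, or by mollification), and no degeneracy enters since $w$ is bounded. Everything else is a routine interpolation of the pointwise bound against the space-time $H^1$-estimate, and the resulting bound on $\|\rho_*^m\|_{L^2(0,T;H^1(\setR))}$ depends only on $T$, $\rho^0$, $m$ and $a$.
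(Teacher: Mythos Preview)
Your proposal is correct and follows essentially the same route as the paper: invoke Proposition~\ref{prop:Linfty} to obtain a uniform $L^\infty$-bound on $\rho_*$, then use the identity $\partial_x(\rho_*^m)=2\rho_*^{m/2}\,\partial_x(\rho_*^{m/2})$ together with Corollary~\ref{cor:regularity} to control $\|\partial_x(\rho_*^m)\|_{L^2}$. Your write-up is in fact more explicit than the paper's (you spell out why the hypothesis of Proposition~\ref{prop:Linfty} is met and justify the chain-rule step), but the underlying argument is identical.
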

\begin{proof}
  By Proposition \ref{prop:Linfty},
  \begin{align*}
    K:=\sup_{t\in\setR_+}\esssup_{x\in\setR}\rho_*(t,x)<\infty,
  \end{align*}
  and so
  \begin{align*}
    \intX \big[\partial_x(\rho_*(t)^m)\big]^2\dd x
    = \intX \big[2\rho_*(t)^{m/2}\partial_x(\rho_*(t)^{m/2})\big]^2\dd x
    \le 4K^m\intX \big[\partial_x(\rho_*(t)^{m/2})\big]^2\dd x
  \end{align*}
  for all $t\ge0$.
  The claim now follows from Corollary \ref{cor:regularity}.
\end{proof}

\subsection{Continuity at $t=0$}
It remains to verify that $u_*$ attains the initial condition $u^0$.
\begin{prop}\label{prop:right_cont_zero}
  $u_*(t)\to u^0$ in $L^1(\setR)$ as $t\downarrow0$.
\end{prop}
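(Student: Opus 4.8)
The plan is to transfer the assertion to the rescaled density $\rho_*$ and then deduce strong $L^1$-convergence from the already available $\wass$-convergence by a compactness argument carried out in $L^m(\setR)$, the key being the energy monotonicity of the scheme. First I would note that $\rescale^{-1}$ is an isometry of $L^1(\setR)$: the substitution $y=\trf(x)$ gives $\|\rescale^{-1}[w]\|_{L^1(\setR)}=\|w\|_{L^1(\setR)}$ for every $w$, and $\rescale^{-1}$ is linear, so it suffices to prove $\rho_*(t)\to\rho^0$ in $L^1(\setR)$ as $t\downarrow0$. Passing the H\"older-type bound \eqref{eq:holder} (equivalently \eqref{eq:ee}) to the limit $\tau_k\downarrow0$ yields $\wass(\rho_*(t),\rho^0)\le(2t\,\poten(\rho^0))^{1/2}$, so $\rho_*(t)$ converges to $\rho^0$ narrowly and $\intX x^2\rho_*(t,x)\dd x\to\intX x^2\rho^0(x)\dd x$ as $t\downarrow0$. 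Combining the $\tau$-uniform bound $\sup_{t>0}\|\rho_*(t)\|_{L^m(\setR)}<\infty$, which follows from \eqref{eq:Lmbound} and lower semicontinuity of the $L^m$-norm, with the tightness provided by the uniform second-moment estimate \eqref{eq:bound2nd}, the family $\{\rho_*(t)\}_{0<t<1}$ is equi-integrable; by the Dunford--Pettis theorem every sequence $t_j\downarrow0$ has a subsequence along which $\rho_*(t_j)$ converges weakly in $L^1(\setR)$, the limit being forced to equal the narrow limit $\rho^0$. Hence $\rho_*(t)\to\rho^0$ weakly in $L^1(\setR)$ and, being bounded in the reflexive space $L^m(\setR)$, also weakly in $L^m(\setR)$.

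The heart of the proof is to upgrade this to strong convergence in $L^m(\setR)$ by a Radon--Riesz type argument. The bound \eqref{eq:Lmbound} survives the limit in the form $\poten(\rho_*(t))\le\poten(\rho^0)$ for every $t>0$ (lower semicontinuity of $\poten$ along the $\wass$-convergent interpolants $\bar\rho_{\tau_k}(t)$, as used in the proof of Lemma \ref{lem:scheme_well_posed}), while the narrow lower semicontinuity of $\poten$ gives $\poten(\rho^0)\le\liminf_{t\downarrow0}\poten(\rho_*(t))$. Therefore $\poten(\rho_*(t))\to\poten(\rho^0)$, i.e. $\|\rho_*(t)\|_{L^m(a\,\dd x)}\to\|\rho^0\|_{L^m(a\,\dd x)}$, where $L^m(a\,\dd x)$ denotes $L^m$ of the measure $a\,\dd x$, a norm equivalent to the usual one because $\mina\le a\in L^\infty(\setR)$. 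Since $m>1$, $L^m(a\,\dd x)$ is uniformly convex, so weak convergence together with convergence of norms forces $\rho_*(t)\to\rho^0$ strongly in $L^m(a\,\dd x)$, hence in $L^m(\setR)$. Strong $L^m$-convergence together with the uniform tail bound $\int_{|x|>R}\rho_*(t,x)\dd x\le B/R^2$ (again from \eqref{eq:bound2nd}) then yields $L^1$-convergence, by splitting $\|\rho_*(t)-\rho^0\|_{L^1(\setR)}\le(2R)^{1-1/m}\|\rho_*(t)-\rho^0\|_{L^m(\setR)}+\int_{|x|>R}(\rho_*(t)+\rho^0)\dd x$ and letting first $t\downarrow0$ and then $R\to\infty$; by the $L^1$-isometry this gives $u_*(t)\to u^0$ in $L^1(\setR)$.

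I expect the main obstacle to be exactly this passage from weak to strong convergence in $L^m$. The naive route via spatial compactness of $\{\rho_*(t)\}_{t\downarrow0}$ fails, because the $H^1$-estimate for $\rho_*^{m/2}$ in Lemma \ref{lem:bound1} is only integrated in time and may degenerate as $t\downarrow0$, so it cannot compactify the family near $t=0$. The substitute is the interplay between the energy monotonicity $\poten(\rho_*(t))\le\poten(\rho^0)$, the lower semicontinuity of $\poten$, and the uniform convexity of $L^m$ for $m>1$. The remaining work --- verifying the elementary identity $m\poten(\rho)=\|\rho\|_{L^m(a\,\dd x)}^m$ and checking that the constants in \eqref{eq:Lmbound} and \eqref{eq:bound2nd} depend only on $\rho^0$, so that all a priori bounds hold uniformly down to $t=0$ --- is routine.
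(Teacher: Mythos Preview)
Your proof is correct and follows essentially the same route as the paper's: both reduce to $\rho_*(t)\to\rho^0$, combine the energy monotonicity $\poten(\rho_*(t))\le\poten(\rho^0)$ with the $\wass$-lower semicontinuity of $\poten$ to get $\poten(\rho_*(t))\to\poten(\rho^0)$, and then invoke the Radon--Riesz property of the uniformly convex space $L^m(a\,\dd x)$ to upgrade weak to strong $L^m$-convergence. The only cosmetic differences are that the paper skips your Dunford--Pettis detour (narrow convergence plus the $L^m$-bound already gives weak $L^m$-convergence directly) and passes from $L^m$ to $L^1$ via a single three-factor H\"older estimate rather than your cut-off at $|x|=R$; both variants work equally well.
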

\begin{proof}
  Since the rescaling $\rescale$ from \eqref{eq:defscaling} is a homeomorphism on $L^1(\setR)$,
  it suffices to show that $\rho_*(t)\to\rho^0$ in $L^1(\setR)$ as $t\downarrow0$.
  By the lower semi-continuity of $\poten$, we can pass to the time-continuous limit $\tau\downarrow0$ in \eqref{eq:Lmbound}
  and obtain
  \begin{equation*}
    \poten(\rho_*(t))\leq \liminf_{\tau\downarrow 0} \poten(\bar \rho_\tau(t)) \leq \poten(\rho^0)
  \end{equation*}
  for every $t>0$,
  which implies that
  \begin{equation}\label{eq:Lmzero_limsup}
    \limsup_{t\downarrow 0}\poten(\rho_*(t))\leq \poten(\rho^0).
  \end{equation}
  On the other hand, since the limiting curve $\rho_*$ is continuous in $\wass$,
  once again the lower semi-continuity of $\poten$ in $\wass$ yields
  \begin{equation*}
    \poten(\rho^0)\leq \liminf_{t\downarrow 0} \poten (\rho_*(t)).
  \end{equation*}
  In combination with \eqref{eq:Lmzero_limsup}, we have that
  \begin{equation}
    \label{eq:normconvergence}
    \lim_{t\downarrow 0} \poten(\rho_*(t)) = \poten(\rho^0).
  \end{equation}
  By definition, $\poten(\rho)$ is the $m$th power of the $L^m$-norm of $\rho$ with respect to the non-uniform background measure $m^{-1}a(x)\dd x$.
  Since $m>1$, and since the weight function $a$ satisfies the bounds (a1)\&(a2),
  we can conclude by standard arguments, see e.g. \cite[Theorem 2.11]{lieb-loss},
  that weak convergence of $\rho_*(t)$ and convergence \eqref{eq:normconvergence} together
  imply strong convergence of $\rho_*(t)$ to $\rho^0$ in $L^m(\setR)$.
  To obtain convergence in $L^1(\setR)$, we apply the generalized H\"older inequality with exponents $2m/(m-1)$, $2$ and $2m$:
  \begin{align*}
    \intX |\rho_*(t)-\rho^0|\dd x
    &\le \intX (1+|x|^2)^{-\frac12}\big[(1+|x|^2)|(\rho_*(t)+\rho^0)\big]^{\frac12}\big[|\rho_*(t)-\rho^0|\big]^{\frac12}\dd x \\
    &\le \bigg[\intX (1+|x|^2)^{-\frac{m}{m-1}}\dd x\bigg]^{\frac{m-1}{2m}}\bigg[\intX(1+|x|^2)(\rho_*(t)+\rho^0)\dd x\bigg]^{\frac12}\|\rho_*(t)-\rho^0\|_{L^m}^{\frac1{2m}}.
  \end{align*}
  The first integral on the right hand side is clearly finite,
  and the second integral remains uniformly bounded as $t\downarrow0$,
  since continuity in $\wass$ implies continuity of the second moment.
  The last term vanishes for $t\downarrow0$ because of the strong convergence of $\rho_*(t)$ in $L^m(\setR)$.
\end{proof}

\subsection{Proof of Theorem \ref{thm:main}}
At this point, we have proven that $u_*$ meets all the requirements for being an entropy solution as stated in Definition \ref{def:entropy_sol_1}:
we have $u_*\in L^1\cap L^\infty(\setR)$ by Proposition \ref{prop:Linfty}
and $u_*^m\in L^2(0,T;H^1(\setR))$ by Corollary \ref{cor:h1};
we further have continuity of $u_*$ at $t=0$ by Lemma \ref{prop:right_cont_zero},
and the validity of the entropy inequality has been verified in Proposition \ref{prop:karlsen}.
Finally, convergence of the time-discrete approximation scheme in Wasserstein and in $L^m$ have been shown
in Lemma \ref{lem:veryweak} and Proposition \ref{prop:compact}, respectively.

\section{Uniqueness of entropy solutions}\label{sec:uniqueness}
In this section we prove Theorem \ref{thm:uniqueness}, using the \emph{doubling of the variables} device.
Since we follow almost literally the proof of \cite[Theorem 1.1]{KR},
we restrict ourselves to the key steps and refer the interested reader to the original article \cite{KR} for more details.

For a fixed $T>0$ we shall use the notation $\Pi_T=\R\times ]0,T[$.
Let $\varphi\in C_c^\infty(\Pi_T\times \Pi_T)$ be a non-negative test function,
and assume that $u$ and $v$ are entropy solutions in the sense of Definition \ref{def:entropy_sol_1}.
For brevity, we write $v=v(x,t)$, $u=u(y,s)$ and  $\varphi=\varphi(x,t,y,s)$.
From Proposition \ref{prop:karlsen} we obtain
\begin{equation}
  \label{eq:doubling1}
  \begin{split}
    -&\iint_{\Pi_T} \iint_{\Pi_T}\big(|v-u|\varphi_t +\sgn(v-u)[(v^m-u^m)b(x)-(v^m)_x]\varphi_x \\
    & \qquad -\sgn(v-u)b'(x)u^m \varphi\big)\dd x \dd t \dd y \dd s \\
    & \leq -\limsup_{\eps\downarrow 0}\iint_{\Pi_T} \iint_{\Pi_T}(v^m)_x^2\sgn'_\eps(v^m-u^m)\varphi \dd x \dd t \dd y \dd s.
  \end{split}
\end{equation}
Recall that $\sgn_\eps$ is a smooth uniformly convergent approximation of the sign function,
which is obtained by mollification with $\delta_\eps$.
Since $(v^m)_x\in L^2(\Pi_T)$,
the following integration by parts is justified:
\begin{align}
  \label{eq:doubling3}
  -\iint_{\Pi_T}\sgn_\eps(v^m-u^m)(v^m)_x \varphi_y \dd y \dd s = \iint_{\Pi_T}(\sgn_\eps(v^m-u^m))_y(v^m)_x \varphi \dd y \dd s.
\end{align}
We integrate \eqref{eq:doubling3} w.r.t. $(x,t)\in\Pi_T$ and send $\epsilon\downarrow0$.
By the dominated convergence theorem, that yields
\begin{equation}
  \label{eq:doubling5}
  \begin{split}
    -\iint_{\Pi_T}\iint_{\Pi_T}&\sgn(v-u)(v^m)_x\varphi_y \dd y\dd s\dd x\dd t \\
    &= -\lim_{\eps\downarrow 0}\iint_{\Pi_T}\iint_{\Pi_T}(u^m)_y (v^m)_x \sgn'_\eps(v^m-u^m)\varphi\dd y\dd s\dd x\dd t.
  \end{split}
\end{equation}
Adding \eqref{eq:doubling1} and \eqref{eq:doubling5} we get
\begin{equation}
  \label{eq:doubling7}
  \begin{split}
    - & \iint_{\Pi_T} \iint_{\Pi_T}\big(|v-u|\varphi_t +\sgn(v-u)\big[(v^m-u^m)b(x)\varphi_x -(v^m)_x(\varphi_x + \varphi_y)\big]\\
    & \qquad -\sgn(v-u)b'(x)u^m \varphi\big)\dd x \dd t \dd y \dd s \\
    & \leq -\limsup_{\eps\rightarrow 0}\iint_{\Pi_T} \iint_{\Pi_T}\big[(v^m)_x^2 - (u^m)_y (v^m)_x\big]\sgn'_\eps(v^m-u^m)\varphi \dd x \dd t \dd y \dd s.
  \end{split}
\end{equation}
The terms in \eqref{eq:doubling7} containing $b$ can be rewritten as follows:
\begin{align}
    & \sgn(v-u)(v^m-u^m)b(x)\varphi_x -\sgn(v-u)b'(x)u^m \varphi \nonumber\\
    & \ = \sgn(v-u)(v^m b(x) - u^m b(y))\varphi_x + \sgn(v-u) (u^m (b(y) - b(x)) \varphi)_x.\label{eq:doubling9}
\end{align}
Now, we repeat the previous steps with a simultaneous interchange of the roles of $u$ and $v$ and the roles of $(y,s)$ and $(x,t)$.
Summation of \eqref{eq:doubling7} with its respective counter part yields
\begin{equation}
  \label{eq:dounlingBIG}
  \begin{split}
    & -\iint_{\Pi_T}\iint_{\Pi_T}\big(|v-u|(\varphi_t + \varphi_s) + \sgn(v-u)\big[v^m b(x) - u^m b(y)\big](\varphi_x + \varphi_y) \\
    & \qquad + |v^m - u^m|(\varphi_{xx} + 2\varphi_{xy} + \varphi_{yy}) \\
    & \qquad + \sgn(v-u)\big[(u^m (b(y)-b(x))\varphi)_x  - (v^m (b(x)-b(y))\varphi)_y\big]\big) \dd x\dd t\dd y\dd s \\
    & \ \leq -\limsup_{\eps\rightarrow 0}\iint_{\Pi_T}\iint_{\Pi_T}|(v^m)_x - (u^m)_y|^2 \sgn'_\eps(v^m-u^m) \varphi \dd x\dd t\dd y\dd s \leq 0,
  \end{split}
\end{equation}
where we have used the identity
\begin{align*}
  \sgn(v-u)((v^m)_x - (u^m)_y) = \big[|v^m - u^m|_x + |v^m - u^m|_y\big]
\end{align*}
and integrated by parts.
We emphasize that for the estimation on the right-hand side of \eqref{eq:dounlingBIG},
the dissipation term from the definition \eqref{eq:entropyformulation} has been essential.

As usual, $\varphi=\varphi(x,t,y,s)$ is chosen in product form,
\begin{equation*}
  \varphi(x,t,y,s) = \psi\big(\frac{x+y}{2},\frac{t+s}{2}\big)\omega_\sigma\big(\frac{x-y}{2}\big)\delta_\sigma\big(\frac{t-s}{2}\big),
\end{equation*}
where $0\leq \psi \in C_c^\infty(\Pi_T)$ is a test function, $\delta_\sigma = \sigma^{-1}\delta_1(t/\sigma)$ for $\sigma>0$ and $\delta_1$ is the mollifier defined in \eqref{eq:mollifier}, and
\begin{equation*}
  \omega_\sigma(x)=\frac{1}{2\sigma} \delta_1\big(\frac{|x|^2}{\sigma^2}\big).
\end{equation*}
Accordingly, we introduce a new set of variables $(\bar x,\bar t, z,\tau)$ with
\begin{equation*}
  \bar x = \frac{x+y}{2},\quad \bar t = \frac{t+s}{2},\quad z = \frac{x-y}{2},\quad \tau = \frac{t-s}{2},
\end{equation*}
for which we have (by the usual abuse of notation)
\begin{align*}
  \partial_{\bar t}= \partial_t+\partial_s, \quad
  \partial_{\bar x}= \partial_x+\partial_y, \quad
  \partial_{\bar x\bar x}= \partial_{xx} + \partial_{yy} + 2\partial_{xy}.
\end{align*}
With the understanding that $u=u(y,s)$ and $v=v(x,t)$ while $\psi=\psi(\bar x,\bar t)$, $\omega_\sigma=\omega_\sigma(z)$ and $\delta_\sigma=\delta_\sigma(\tau)$,
the inequality \eqref{eq:dounlingBIG} can be written as
\begin{align}
  \label{eq:doubling_many}
  \begin{split}
    0 \ge J_\sigma :=
    -\iint_{\Pi_T}\iint_{\Pi_T}&\bigg[ \Big\{
    |v-u|\partial_{\bar t}\psi
    + \sgn(v-u)(v^m b(x) -u^m b(y))\partial_{\bar x}\psi
    + |v^m-u^m|\partial_{\bar x\bar x}\psi \\
    & + \sgn(v-u)\big[\big((u^m(b(y)-b(x)))_x - (v^m(b(x)-b(y)))_y\big)\psi \\
    & \qquad + u^m(b(y)-b(x))\partial_x \psi - v^m (b(x)-b(y))\partial_y \psi\big]
    \Big\}\omega_\sigma\delta_\sigma \\
    & + (b(x)-b(y))|v^m-u^m|\psi\delta_\sigma
    \partial_z \omega_\sigma\bigg] \dd x \dd t \dd y \dd s.
  \end{split}
\end{align}
By assumption, $b\in W^{1,\infty}(\setR)$.
Moreover, since $u$ and $v$ belong to $L^\infty(\Pi_T)$, and since $\delta_1'(x)\leq 0$ for $x>0$,
we can find a constant $K$ (depending on $b$)
such that
\begin{equation}
  \label{eq:b_quotient}
  \begin{split}
    (b(x)-b(y))|v^m-u^m|\partial_z\omega_\sigma
    &= (b(x)-b(y))|v^m-u^m|z\sigma^{-3}\delta_1'\big(\frac{z^2}{\sigma^2}\big)  \\
    &\leq K|v-u|\frac{z^2}{\sigma^2} \sigma^{-1}\chi_{|z|\leq 2\sigma}.
  \end{split}
\end{equation}
Now we perform the limit $\sigma\downarrow0$, which concentrates the support of $\varphi$ on the diagonals $x=y$ and $t=s$.
By Lebesgue differentiation theorem we then obtain
\begin{equation}
  \label{eq:7}
  \begin{split}
    0\ge \lim_{\sigma\downarrow0}J_\sigma
    \ge - \iint_{\Pi_T} &\bigg[ \Big\{
    |v-u|\psi_t
    + b(x)\big|v(x,t)^m-u(x,t)^m\big|\psi_x
    + \big|v(x,t)^m-u(x,t)^m|\psi_{xx} \\
    & + b'(x)\big|v(x,t)^m - u(x,t)^m\big|\psi
    \Big\}
    + K|v(x,t)-u(x,t)|\psi \bigg] \dd x \dd t .
  \end{split}
\end{equation}
Regrouping terms and using again that $b\in W^{1,\infty}(\setR)$,
we arrive at the key estimate
\begin{align}
  \label{eq:kruz2}
  -\iint_{\Pi_T}\big(|u-v|\psi_t + b|v^m-u^m|\psi_x + |v^m-u^m|\psi_{xx} ) \dd x\dd t \leq C\iint_{\Pi_T}|v-u|\psi \dd x\dd t,
\end{align}
for some $C>0$ depending on $K$ and on $b$.

At this point, we make the classical special choice for the test function $\psi$.
For any given $0<t_1<t_2<T$ and $r>0$,
let $\theta_\sigma\in C^\infty_c(]0,T[)$ be the $\sigma$-mollification of the characteristic function of the interval $[t_1,t_2]$,
and let $\phi_r\in C^\infty_c(\setR)$ be such that $\phi_r(x)=1$ for $|x|\le r$ and $\phi_r(x)=0$ for $|x|\ge r+1$.
Setting $\psi(x,t)=\phi_r(x)\theta_\sigma(t)$ yields
\begin{align*}
    &\lim_{r\to\infty}\iint_{\Pi_T} \big( b|v^m-u^m|\psi_x + |v^m-u^m|\psi_{xx}\big) \dd x \dd t  \\
    & \leq \bar C \lim_{r\to\infty} \iint_{\Pi_T\cap ||x|-r|\leq 1}\big(v+u\big)\dd x\dd t = 0
\end{align*}
by the dominated convergence theorem (with respect to $t\in[0,T]$),
because $u$ and $v$ are $t$-uniformly bounded in $L^1(\R)$.
Therefore, by sending $r\to\infty$ in \eqref{eq:kruz2}, we get
\begin{equation}\label{eq:kruz3}
    -\int_0^T \int_\R |v-u|\theta'_{\sigma}(t) \dd x\dd t \leq C\int_0^T \int_\R |v-u|\theta_{\sigma}(t) \dd x\dd t.
\end{equation}
Finally, passing to $\sigma\downarrow0$,
we obtain
\begin{equation}\label{eq:kruz4}
    \|u(t_2)-v(t_2)\|_{L^1} \leq \|u(t_1)-v(t_1)\|_{L^1} + C\int_{t_1}^{t_2} \|u(\tau)-v(\tau)\|_{L^1}\dd\tau,
\end{equation}
hence we can use the Gronwall inequality in \eqref{eq:kruz4} (integral form for measurable functions, cf. \cite{ethier}) to obtain
\begin{equation}\label{eq:kruz5}
    \|u(t_2)-v(t_2)\|_{L^1} \leq \|u(t_1)-v(t_1)\|_{L^1}(1+ C(t_2-t_1)e^{C(t_2-t_1)}).
\end{equation}
Since $u$ and $v$ are right continuous at $t=0$,
we can perform the limit $t_1\downarrow 0$ in \eqref{eq:kruz5} and obtain stability of the entropy solutions $u$, $v$ in the $L^1(\setR)$ norm.
In particular, if $v(0)=u(0)$, then $v(t)=u(t)$ for all $t>0$, which shows uniqueness.

\appendix

\section{A lemma on lower semi-continuity}\label{sec:app_LSC}
\begin{lem}
  \label{lem:Hlsc}
  Let $H\in C^1(\setR_+)$, and let $\phi\in C^0(\setR)$ be a bounded non-negative function.
  Define the functional $\Psi:\dens\to[0,\infty]$ by
  \begin{align*}
    \Psi(\eta) = \intX \phi(x)\big[\partial_xH(\eta(x))\big]^2\dd x
  \end{align*}
  whenever the integral is well-defined, and $+\infty$ otherwise.
  Then $\Psi$ is sequentially lower semi-continuous in $L^p(\setR)$, for arbitrary $p\ge1$.
\end{lem}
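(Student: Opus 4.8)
The plan is to prove sequential lower semi-continuity along an almost-everywhere convergent subsequence, and to localize to the region where $\phi$ is bounded away from zero in order to buy a gradient bound for $H(\eta_n)$. So let $\eta_n\to\eta$ in $L^p(\setR)$ and put $L:=\liminf_{n\to\infty}\Psi(\eta_n)$. If $L=+\infty$ there is nothing to prove; assume $L<\infty$, and after passing to a subsequence (not relabeled) assume $\Psi(\eta_n)\to L$, $\sup_n\Psi(\eta_n)\le C<\infty$, and --- $L^p$-convergence implying a.e.\ convergence along a further subsequence --- that $\eta_n\to\eta$ pointwise almost everywhere. By continuity of $H$, the functions $w_n:=H(\eta_n)$ converge a.e.\ to $w:=H(\eta)$. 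By the very definition of $\Psi$, finiteness of $\Psi(\eta_n)$ entails that $w_n$ admits a weak derivative on the open set $\{\phi>0\}$, with $\int_{\{\phi>0\}}\phi\,(w_n')^2\dd x=\Psi(\eta_n)\le C$.

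Next I fix $\delta>0$ and set $\Omega_\delta:=\{x\in\setR:\phi(x)>\delta\}$, an open set, written as a disjoint union $\Omega_\delta=\bigsqcup_j J_j$ of open intervals. On each $J_j$ one has $\int_{J_j}(w_n')^2\dd x\le\delta^{-1}\int_{J_j}\phi\,(w_n')^2\dd x\le C/\delta$, so $(w_n')_n$ is bounded in $L^2(J_j)$; let $g_j$ be the weak limit along some subsequence. Using the absolutely continuous representative, $w_n(x)-w_n(x_0)=\int_{x_0}^{x}w_n'\dd\xi$ for $x_0,x\in J_j$; since $w_n\to w$ a.e.\ and the indicator of a bounded subinterval lies in $L^2(J_j)$, I can pass to the limit and obtain $w(x)=w(x_0)+\int_{x_0}^{x}g_j\dd\xi$ for a.e.\ $x_0,x\in J_j$. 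Hence $w\in W^{1,2}_{\mathrm{loc}}(J_j)$ with $w'=g_j\in L^2(J_j)$; as $w'$ does not depend on the subsequence, the whole sequence satisfies $w_n'\rightharpoonup w'$ in $L^2(J_j)$, and since $\phi$ is bounded this gives $\phi^{1/2}w_n'\rightharpoonup\phi^{1/2}w'$ in $L^2(J_j)$. Weak lower semi-continuity of the $L^2$-norm then yields
\begin{align*}
  \int_{J_j}\phi\,(w')^2\dd x\le\liminf_{n\to\infty}\int_{J_j}\phi\,(w_n')^2\dd x .
\end{align*}

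To finish, I sum this over any finite set $S$ of indices: by superadditivity of $\liminf$ (a finite sum) and $\sum_{j\in S}\int_{J_j}\phi\,(w_n')^2\dd x\le\int_{\Omega_\delta}\phi\,(w_n')^2\dd x\le\Psi(\eta_n)$, I get $\sum_{j\in S}\int_{J_j}\phi\,(w')^2\dd x\le L$; letting $S$ exhaust all indices gives $\int_{\Omega_\delta}\phi\,(w')^2\dd x\le L$. Since then $w$ is weakly differentiable on $\{\phi>0\}=\bigcup_{\delta>0}\Omega_\delta$, I let $\delta\downarrow0$ and conclude by monotone convergence that $\Psi(\eta)=\int_{\{\phi>0\}}\phi\,(w')^2\dd x\le L=\liminf_{n\to\infty}\Psi(\eta_n)$, which is the claim. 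The generality $p\ge1$ enters only through the passage to an a.e.\ convergent subsequence, which is available for every $p$.

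The step that requires care --- and the reason this is not a one-line invocation of a standard weak lower semi-continuity theorem --- is that $\phi$ may vanish, so there is no $n$-uniform $W^{1,2}(\setR)$ bound on $w_n$, and $H$ carries no growth restriction, so $\eta_n\to\eta$ in $L^p$ gives no a priori $L^q_{\mathrm{loc}}$ control on $w_n=H(\eta_n)$. Both obstacles are removed by the componentwise analysis: on each connected component of $\{\phi>\delta\}$ the gradient bound makes the $w_n$ equicontinuous, and the almost-everywhere convergence fixes the value of $w_n$ at one point of that component, which is exactly what is needed to identify the $L^2$-weak limit of $w_n'$ as the weak derivative of $w$.
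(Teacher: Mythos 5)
Your proof is correct and follows essentially the same route as the paper's: reduce to an a.e.\ convergent subsequence, localize to the sets where $\phi$ is bounded below to obtain a uniform $L^2$ bound on $\partial_x H(\eta_n)$, identify the weak limit of these derivatives as $\partial_x H(\eta)$ via the pointwise convergence, apply weak lower semi-continuity of the weighted $L^2$ norm, and exhaust by monotone convergence. The only (harmless) variation is that you identify the weak limit through the fundamental theorem of calculus anchored at a point of a.e.\ convergence on each component, whereas the paper invokes an $H^1$ bound together with Rellich's lemma on the compact sets $\Omega_\epsilon$; your version is, if anything, slightly more self-contained since it never needs an $L^2$ bound on $H(\eta_n)$ itself.
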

\begin{proof}
  Let $(\eta_n)_{n\in\setN}$ be a sequence that converges to $\eta_0$ in $L^p(\setR)$, with $\bar\Psi:=\sup_n\Psi(\eta_n)<\infty$.
  Without loss of generality, we may even assume that $\eta_n$ converges to $\eta$ almost everywhere on $\setR$.
  For every $\epsilon>0$, introduce the functional $\Psi_\epsilon:\dens\to[0,\infty]$ with
  \begin{align*}
    \Psi_\epsilon(\eta) = \int_{\Omega_\epsilon} \phi [\partial_xH(\eta)]^2\dd x
    \quad\text{where}\quad
    \Omega_\epsilon:=\big\{x\in[-\epsilon^{-1},\epsilon^{-1}]\big|\phi(x)\ge\epsilon\big\},
  \end{align*}
  with the understanding that $\Psi_\epsilon(\eta)=+\infty$ unless $H(\eta)\in H^1(\Omega_\epsilon)$.
  Since $\phi$ is non-negative,
  \begin{align*}
    \epsilon \int_{\Omega_\epsilon} [\partial_xH(\eta_n)]^2\dd x
    \le \Psi_\epsilon(\eta_n) \le \Psi(\eta_n)\le\bar\Psi.
  \end{align*}
  Thus, the functions $H(\eta_n)$ are $n$-uniformly bounded in $H^1(\Omega_\epsilon)$.
  By Alaoglu's theorem, every subsequence $(H(\eta_{n'}))_{n'\in\setN}$ contains a subsubsequence $(H(\eta_{n''}))_{n''\in\setN}$
  that converges weakly in $H^1(\Omega_\epsilon)$ to some limit $h$.
  Further, since $\Omega_\epsilon$ is compact by definition, Rellich's Lemma applies,
  and $H(\eta_{n''})$ converges to $h$ strongly in $L^2(\Omega_\epsilon)$.
  We have assumed that $\eta_n$ converges pointwise almost everywhere to $\eta_0$,
  hence $H(\eta_{n''})$ converges to $H(\eta_0)$ almost everywhere,
  and we conclude $h=H(\eta_0)\in H^1(\Omega_\epsilon)$ --- independently of the chosen subsequence.
  From here, it follows that
  \begin{align}
    \label{eq:weakl2}
    \partial_xH(\eta_n)\rightharpoonup\partial_xH(\eta_0) \quad \text{in $L^2(\Omega_\epsilon)$}.
  \end{align}
  By elementary calculations, one verifies that
  \begin{align*}
    \Psi_\epsilon(\eta_n) \ge \Psi_\epsilon(\eta_0)
    + 2 \int_{\Omega_\epsilon} \phi\big(\partial_xH(\eta_n)-\partial_xH(\eta_0)\big)\partial_xH(\eta_0)\dd x.
  \end{align*}
  Using \eqref{eq:weakl2}, the limit $n\to\infty$ provides
  \begin{align*}
    \Psi_\epsilon(\eta_0) \le \liminf_{n\to\infty} \Psi_\epsilon(\eta_n) \le \bar\Psi.
  \end{align*}
  To conclude the proof, observe that $\Psi_\epsilon(\eta_0)\to\Psi(\eta_0)$ in the limit $\epsilon\downarrow0$
  by the monotone convergence theorem.
\end{proof}

\section*{Acknowledgments}
The authors acknowledge fruitful discussions with Jos\'{e} A.~Carrillo and Boris Andreianov on this subject.
MDF is supported by the `Ramon y Cajal' Sub-programme (MICINN-RYC) of the Spanish Ministry of Science and Innovation, Ref. RYC-2010-06412
and by the by the Ministerio de Ciencia e Innovaci\'on, grant MTM2011-27739-C04-02.
Most of the present work was carried out during several visits by MDF at the Dynamical Systems Research Unit of TU Munich in 2011 and 2012,
he acknowledges support and hospitality.

\def\cprime{$'$}


\begin{thebibliography}{10}

\bibitem{AGS}
L.~Ambrosio, N.~Gigli, and G.~Savar\'e.
\newblock {\em {Gradient flows in metric spaces and in the space of probability
  measures. 2nd ed.}}
\newblock {Lectures in Mathematics, ETH Z\"urich. Basel: Birkh\"auser.}, 2008.

\bibitem{brenier}
Y.~Bolley, F. ad~Brenier and G.~Loeper.
\newblock Contractive metrics for scalar conservation laws.
\newblock {\em Journal of Hyperbolic Differential Equations}, 2:91--107, 2005.

\bibitem{Car99}
J.~Carrillo.
\newblock Entropy solutions for nonlinear degenerate problems.
\newblock {\em Arch. Ration. Mech. Anal.}, 147(4):269--361, 1999.

\bibitem{CDT}
J.~A. Carrillo, M.~Di~Francesco, and C.~Lattanzio.
\newblock Contractivity of wasserstein metrics and asymptotic profiles for
  scalar conservation laws.
\newblock {\em Journal of Differential Equations}, 231:425--458, 2006.

\bibitem{Cra72}
G.~Crandall.
\newblock The semigroup approach to first--order quasilinear equations in
  several space variables.
\newblock {\em Israel J. Math.}, 12:108--132, 1972.

\bibitem{dafermos_entropy}
C.~Dafermos.
\newblock The entropy rate admissibility criterion for solutions of hyperbolic
  conservation laws.
\newblock {\em J. Diff. Equations}, 14:202--212, 1973.

\bibitem{dafermos_wft}
C.~M. Dafermos.
\newblock Polygonal approximations of solutions of the initial value problem
  for a conservation law.
\newblock {\em J. Math. Anal. Appl.}, 38:33--41, 1972.

\bibitem{Daf00}
C.~M. Dafermos.
\newblock {\em Hyperbolic conservation laws in continuum physics}, volume 325
  of {\em Grundlehren der Mathematischen Wissenschaften [Fundamental Principles
  of Mathematical Sciences]}.
\newblock Springer-Verlag, Berlin, 2000.

\bibitem{daneri}
S.~Daneri and G.~Savar{\'e}.
\newblock Eulerian calculus for the displacement convexity in the {W}asserstein
  distance.
\newblock {\em SIAM J. Math. Anal.}, 40(3):1104--1122, 2008.

\bibitem{degiorgi}
E.~De~Giorgi.
\newblock New problems on minimizing movements.
\newblock In {\em Boundary value problems for partial differential equations
  and applications}, volume~29 of {\em RMA Res. Notes Appl. Math.}, pages
  81--98. Masson, Paris, 1993.

\bibitem{KR}
Y.~Du.
\newblock {\em {Order structure and topological methods in nonlinear partial
  differential equations. Vol. 1. Maximum principles and applications.}}
\newblock {Series in Partial Differential Equations and Applications 2.
  Hackensack, NJ: World Scientific.}, 2006.

\bibitem{DunfordSchwartz}
N.~Dunford and J.~T. Schwartz.
\newblock {\em Linear operators. {P}art {I}}.
\newblock Wiley Classics Library. John Wiley \& Sons Inc., New York, 1988.
\newblock General theory, With the assistance of William G. Bade and Robert G.
  Bartle, Reprint of the 1958 original, A Wiley-Interscience Publication.

\bibitem{ethier}
S.~N. Ethier and T.~G. Kurtz.
\newblock {\em Markov processes}.
\newblock Wiley Series in Probability and Mathematical Statistics: Probability
  and Mathematical Statistics. John Wiley \& Sons Inc., New York, 1986.
\newblock Characterization and convergence.

\bibitem{friedlander}
F.~G. Friedlander.
\newblock {\em Introduction to the theory of distributions}.
\newblock Cambridge University Press, Cambridge, 1982.

\bibitem{GigliOtto}
N.~Gigli and F.~Otto.
\newblock Entropic burgers' equation via a minimizing movement scheme based on
  the wasserstein metric.
\newblock {\em Calculus of Variations and Partial Differential Equations},
  Online First:1--26, 2012.

\bibitem{JKO}
R.~Jordan, D.~Kinderlehrer, and F.~Otto.
\newblock The variational formulation of the {F}okker-{P}lanck equation.
\newblock {\em SIAM J. Math. Anal.}, 29(1):1--17, 1998.

\bibitem{karlsen_ohlberger}
K.~H. Karlsen and M.~Ohlberger.
\newblock A note on the uniqueness of entropy solutions of nonlinear degenerate
  parabolic equations.
\newblock {\em J. Math. Anal. Appl.}, 275:439--458, 2002.

\bibitem{karlsen2003}
K.~H. Karlsen and N.~H. Risebro.
\newblock On the uniqueness and stability of entropy solutions of nonlinear
  degenerate parabolic equations with rough coefficients.
\newblock {\em Discrete Contin. Dyn. Syst.}, 9(5):1081--1104, 2003.

\bibitem{KruvzkovEntropy}
S.~N. Kru\v{z}kov.
\newblock First order quasilinear equations in serveral independent variables.
\newblock {\em Math. USSR Sb}, 10:217--243, 1970.

\bibitem{kruzkov}
S.~N. Kru\v{z}kov.
\newblock generalized solutions of the hamilton-jacobi equations of eikonal
  type. i. formulation of the problems; existence, uniqueness and stability
  theorems; some properties of the solutions.
\newblock {\em Math. USSR Sb}, 27, 1975.

\bibitem{lieb-loss}
E.~H. Lieb and M.~Loss.
\newblock {\em Analysis}.
\newblock Graduate Studies in Mathematics, 14 American Mathematical Society,
  Providence, RI, 1997.

\bibitem{matthes_mccann_savare}
D.~Matthes, R.~J. McCann, and G.~Savar{\'e}.
\newblock A family of nonlinear fourth order equations of gradient flow type.
\newblock {\em Comm. Partial Differential Equations}, 34(10-12):1352--1397,
  2009.

\bibitem{otto97}
F.~Otto.
\newblock $l^1$-contraction and uniqueness for unstationary
  saturated-unsaturated porous media flow.
\newblock {\em Adv. Math. Sci. Appl.}, 7:537--553, 1997.

\bibitem{Otto-relax}
F.~Otto.
\newblock Evolution of microstructure in unstable porous media flow: a
  relaxational approach.
\newblock {\em Comm. Pure Appl. Math.}, 52(7):873--915, 1999.

\bibitem{rossi_savare}
R.~Rossi and G.~Savar{\'e}.
\newblock Tightness, integral equicontinuity and compactness for evolution
  problems in {B}anach spaces.
\newblock {\em Ann. Sc. Norm. Super. Pisa Cl. Sci. (5)}, 2(2):395--431, 2003.

\bibitem{showalter}
R.~E. Showalter.
\newblock {\em Monotone operators in {B}anach space and nonlinear partial
  differential equations}, volume~49 of {\em Mathematical Surveys and
  Monographs}.
\newblock American Mathematical Society, Providence, RI, 1997.

\bibitem{villani_book}
C.~Villani.
\newblock {\em Topics in optimal transportation}, volume~58 of {\em Graduate
  Studies in Mathematics}.
\newblock American Mathematical Society, Providence, RI, 2003.

\bibitem{villani_book_2}
C.~Villani.
\newblock {\em {Optimal transport. Old and new.}}
\newblock {Grundlehren der Mathematischen Wissenschaften 338. Berlin: Springer.
  xxii, 973~p.}, 2009.

\end{thebibliography}
\end{document}